\let\mathcal\mathscr
\theoremstyle{plain}
\newtheorem{prop}{Proposition}[section]
\newtheorem{lem}[prop]{Lemma}
\newtheorem{thm}[prop]{Theorem}
\newtheorem{cor}[prop]{Corollary}
\theoremstyle{remark}
\newtheorem{remar}[prop]{Remark}
\theoremstyle{definition}
\DeclareMathAlphabet{\mathpzc}{OT1}{pzc}{m}{it}
\DeclareMathOperator{\End}{End}
\DeclareMathOperator{\Hom}{Hom}
\DeclareMathOperator{\Sym}{Sym}
\DeclareMathOperator{\GL}{GL}
\DeclareMathOperator{\SL}{SL}
\DeclareMathOperator{\WD}{WD}
\DeclareMathOperator{\Gal}{Gal}
\DeclareMathOperator{\tr}{tr}
\DeclareMathOperator{\Spec}{Spec}
\DeclareMathOperator{\mSpec}{m-Spec}
\DeclareMathOperator{\Ext}{Ext}
\newcommand{\Qp}{\mathbb {Q}_p}
\newcommand{\Zp}{\mathbb{Z}_p}
\newcommand{\Qpbar}{\overline{\mathbb{Q}}_p}
\newcommand{\Eins}{\mathbf 1}
\newcommand{\ZZ}{\mathbb Z}
\newcommand{\Fp}{\mathbb F_p}
\newcommand{\mm}{\mathfrak m}
\newcommand{\st}{\mathrm{st}}
\newcommand{\OO}{\mathcal O}
\newcommand{\gal}{\mathcal G_{\Qp}}
\DeclareMathOperator{\wtimes}{\widehat{\otimes}}
\newcommand{\nn}{\mathfrak n}
\newcommand{\md}{\mathrm m}
\newcommand{\rr}{\mathfrak r}
\newcommand{\pp}{\mathfrak p}
\newcommand{\qq}{\mathfrak{q}}
\newcommand{\cont}{\mathrm{cont}}
\newcommand{\univ}{\mathrm{univ}}
\newcommand{\CH}{\mathrm{CH}}
\newcommand{\ver}{\mathrm{ver}}
\newcommand{\ps}{\mathrm{ps}}
\newcommand{\wt}{\mathbf w}
\newcommand{\irr}{\mathrm{irr}}
\newcommand{\op}{\mathrm{op}}
\DeclareMathOperator{\LLL}{LL}
\title{ On $2$-adic deformations}
\author{Vytautas Pa\v{s}k\={u}nas}
\date{\today.}
\begin{document} 

\begin{abstract} We compute the versal deformation ring of a split generic $2$-dimensional representation $\chi_1\oplus \chi_2$ of the absolute Galois group  of $\Qp$.
As an application, we show that the Breuil--M\'ezard conjecture for both non-split extensions of $\chi_1$ by $\chi_2$ and $\chi_2$ by $\chi_1$  implies the  Breuil--M\'ezard conjecture for $\chi_1\oplus \chi_2$. The result is new for $p=2$, the proof works for all primes. 
\end{abstract}

\maketitle

\section{Introduction}
Let $L$ be a finite extension of $\Qp$ with the ring of integers $\OO$ and residue field $k$.  Let $\gal$ be the absolute Galois group of $\Qp$ and let $\chi_1, \chi_2:\gal\rightarrow k^{\times}$ be continuous group homomorphisms, such that $\chi_1\chi_2^{-1}\neq \Eins, \omega^{\pm 1}$, where $\omega$ is the 
cyclotomic character modulo $p$. We let 
\begin{equation}\label{matrices}
\rho_1:= \begin{pmatrix} \chi_1 & \ast \\ 0 & \chi_2\end{pmatrix}, \quad \rho_2:= \begin{pmatrix} \chi_1 & 0 \\ \ast & \chi_2\end{pmatrix},
\end{equation}
be non-split extensions. The assumption  $\chi_1\chi_2^{-1}\neq \Eins, \omega^{\pm 1}$ implies that both groups $\Ext^1_{\gal}(\chi_1, \chi_2)$ and $\Ext^1_{\gal}(\chi_2, \chi_1)$ are $1$-dimensional, thus $\rho_1$ and $\rho_2$
are uniquely determined up to an isomorphism. 

Let $\mathfrak A$ be the category of local artinian augmented $\OO$-algebras with residue field $k$. Let $D_1$, $D_2$ be functors from $\mathfrak A$ to 
the category of sets, such that for $A\in \mathfrak A$ and $i=1,2$, $D_i(A)$ is the set of deformations of $\rho_i$ to $A$.
Since $\rho_1$ and $\rho_2$ have scalar endomorphisms the functors $D_1, D_2$ are pro-represented by the universal deformation rings $R_1$, $R_2$, respectively. We let $\rho_1^{\univ}$, $\rho_2^{\univ}$ be the universal deformations of $\rho_1$ and $\rho_2$, respectively. 

The representations $\rho_1^{\univ}$, $\rho_2^{\univ}$ are naturally pseudo-compact modules over the completed group algebra $\OO[[\gal]]$. 
The main purpose of this note is to compute the ring $\End^{\cont}_{\OO[[\gal]]}(\rho_1^{\univ}\oplus \rho_2^{\univ})$. The motivation for this 
computation stems from a companion paper \cite{p2}, which shows that the category of pseudo-compact modules over this ring is naturally 
anti-equivalent to a certain subcategory of smooth $\GL_2(\Qp)$-representations on $\OO$-torsion modules. In fact, for this application 
 we have to work with a fixed determinant, but we will ignore it in this introduction. If $p>2$ then this result has been proved in \cite[\S B.2]{cmf}.
 The proof there uses a result of B\"ockle \cite{boeckle}, which realizes $\rho_1^{\univ}$ and $\rho_2^{\univ}$ concretely by writing down 
 matrices for the topological generators. B\"ockle's paper in turn uses results of Pink \cite{pink} on the classification of pro-$p$ subgroups of $\SL_2(R)$, 
 where $R$ is a $p$-adic ring with $p>2$.
 
 In this paper, we give a different argument, which works for all primes $p$, and obtain a more intrinsic description of $\End^{\cont}_{\OO[[\gal]]}(\rho_1^{\univ}\oplus \rho_2^{\univ})$, which we will now describe. Let $D^{\ps}: \mathfrak A\rightarrow Sets$  be the functor, which sends $A$ to the set of $2$-dimensional $A$-valued determinants lifting the pair $(\chi_1+\chi_2, \chi_1 \chi_2)$. The notion of an $n$-dimensional determinant has been 
 introduced by Chenevier in \cite{che_det}. If $p>n$ then it is equivalent to that of an $n$-dimensional pseudo-representation (pseudo-character).
  The functor $D^{\ps}$ is pro-represented by a complete local noetherian $\OO$-algebra $R^{\ps}$. We let $(t^{\univ}, d^{\univ})$ be the universal object. 
 We show that  $\End^{\cont}_{\OO[[\gal]]}(\rho_1^{\univ}\oplus \rho_2^{\univ})$ is naturally isomorphic to the algebra opposite to the 
 Cayley--Hamilton algebra $\CH(R^{\ps}):= R^{\ps}[[\gal]]/J$, where $J$ is the closed two-sided ideal in $R^{\ps}[[\gal]]$ generated by the elements
 $g^2-t^{\univ}(g)g +d^{\univ}(g)$, for all $g\in \gal$. We also show that $\CH(R^{\ps})$ is a free $R^{\ps}$-module of rank $4$ and compute the
 multiplication table for the generators, see Proposition \ref{relation}. From this description we deduce that the centre of $\End^{\cont}_{\OO[[\gal]]}(\rho_1^{\univ}\oplus \rho_2^{\univ})$ is naturally isomorphic to $R^{\ps}$. As a part of the proof we show in Proposition \ref{trace_rep} that mapping a representation to its trace and determinant induces isomorphisms $R^{\ps}\overset{\cong}{\rightarrow} R_1$, $R^{\ps}\overset{\cong}{\rightarrow} R_2$. In particular, 
 $$(t^{\univ}, d^{\univ})=(\tr \rho_1^{\univ}, \det \rho_1^{\univ})=(\tr \rho_2^{\univ}, \det \rho_2^{\univ}).$$ A key ingredient, in the most difficult 
 $p=2$ case, is the computation of $D^{\ps}(k[\varepsilon])$ done in Proposition \ref{tangent_space_seq}, where we follow very closely an argument of Bella\"iche \cite{bel}, and the description by Chenevier of $R_1$ and $R_2$ in \cite{che_lieu}. In fact Chenevier  has already shown that the maps are surjective, and 
 become isomorphism after inverting $2$.
 
In \S \ref{versal_ring} we compute the versal deformation ring $R^{\ver}$ of $\chi_1\oplus \chi_2$. We show that 
$$R^{\ver}\cong R^{\ps}[[x,y]]/(xy-c),$$
where $c\in R^{\ps}$ generates the reducibility ideal. This has been observed by Yongquan Hu and Fucheng Tan in \cite{hu_tan}, for $p>2$, 
using results of B\"ockle, \cite{boeckle}, which, as explained above, involves writing down matrices of ``the most general form"
for the topological generators. Our proof works as follows. If $\rho: \gal\rightarrow \GL_2(k)$ is a continuous representation with semi-simplification isomorphic to $\chi_1\oplus \chi_2$ then any 
 lift of $\rho$ to $A\in \mathfrak A$, $\rho_A: \gal\rightarrow \GL_2(A)$ is naturally an $\CH(R^{\ps})$-module. The idea is that if 
 one understands the algebra $\CH(R^{\ps})$ well, one should be able just to write down the ``most general" deformation of $\rho$. 
 In view of structural results on Cayley--Hamilton algebras by Bella\"iche--Chenevier \cite[\S1.4.3]{bel_che}, we expect that this idea
 will be applicable in other contexts. 
 
 If $p=2$ then  using the description of $R^{\ver}$ above, we observe in Remark \ref{remark_BJ} that 
 $R^{\ver}$ has two irreducible components which, via the map induced by taking determinants, correspond to the two irreducible components
of the universal deformation ring of $1$-dimensional representation $\chi_1\chi_2$. This verifies a conjecture of B\"ockle and Juschka \cite{boe_ju}
in this case. 
 
 In \S \ref{BM} we show that the Breuil--M\'ezard conjecture formulated in \cite{bm}, which describes the Hilbert--Samuel multiplicities of 
 potentially semi-stable
 deformation rings, for $\rho_1$ and $\rho_2$ implies the Breuil--M\'ezard conjecture for the residual representation $\chi_1\oplus \chi_2$. 
 If $p>2$ then the Breuil--M\'ezard conjecture in these cases has been proved by Kisin \cite{kisinfm} as a part of his proof of Fontaine--Mazur 
 conjecture. In \cite{duke}, again under assumption $p>2$, we have given a different local proof for the residual representations $\rho_1$ and $\rho_2$.
  Yongquan Hu and Fucheng Tan observed  in \cite{hu_tan} that the Breuil--M\'ezard conjecture for $\rho_1$ and $\rho_2$ implies the result for 
  $\chi_1\oplus \chi_2$, thus obtaining a local proof also in the generic split case. They use results of B\"ockle to describe the versal deformation ring, 
  and this forces them to assume $p>2$. We use our  description of the versal ring, which works for all $p$, and closely follow their argument.
   The upshot is that in the companion paper \cite{p2} we apply the formalism developed in \cite{duke} to prove the Breuil--M\'ezard conjecture for 
 $\rho_1$ and $\rho_2$, when $p=2$, and this paper implies the result in the split non-scalar case. We formulate our results in the language of 
 cycles, as introduced by Emerton--Gee in \cite{emertongee}.

\textit{Acknowledgements.} A large part of the paper was written while visiting Michael Spie{\ss} at the University of Bielefeld. I thank SFB 701 for generous support 
during my visit. 

\section{Notation}

Let $D^{\ps}:\mathfrak A \rightarrow Sets$ be a functor, which maps  $(A,\mm_A)\in \mathfrak A$ to the set of pairs of  functions
 $(t, d): \gal\rightarrow A$, such 
that the following hold: $d:\gal \rightarrow A^{\times}$ is a continuous group homomorphism, congruent to $\chi_1\chi_2$ modulo $\mm_A$, 
$t: \gal\rightarrow A$ is a continuous function with $t(1)=2$, and, which satisfy for all $g, h\in \gal$:
\begin{itemize} 
\item[(i)] $t(g)\equiv \chi_1(g)+\chi_2(g)\pmod{\mm_A}$;
\item[(ii)] $t(gh)=t(hg)$;
\item[(iii)] $d(g) t(g^{-1}h)-t(g)t(h)+ t(gh)=0$.
\end{itemize}
Such a pair $(t, d): \gal\rightarrow A$ corresponds to an $A$-valued $2$-dimensional determinant in the sense of \cite[Def. 1.15]{che_det}, see \cite[Ex. 1.18]{che_det}.  Given such a pair, we let $J$ be the closed two-sided ideal in $A[[\gal]]$ generated by the elements $g^2-t(g)g + d(g)$ for all $g\in \gal$, and 
let 
$$ \CH(A):= A[[\gal]]/J$$
be the corresponding Cayley--Hamilton algebra. 

Let $H$ be the image and $K$ be the kernel of the group homomorphism $\gal\rightarrow k^{\times}\times k^{\times}$, $g\mapsto (\chi_1(g), \chi_2(g))$.
Let $P$ be the maximal pro-$p$ quotient of $K$ and let $G$ be the quotient of $\gal$ fitting into the exact sequence $1\rightarrow P\rightarrow G\rightarrow H\rightarrow 1$. It follows from \cite[Lem. A.1]{cmf} that the map $A[[K]]\rightarrow \CH(A)$ factors through $A[[P]]\rightarrow \CH(A)$. In particular, $t(g)$ and $d(g)$ 
depend only on the image of $g$ in $G$, and this induces an isomorphism of algebras $A[[G]]/J\cong \CH(A)$, where  $J$ is the closed two-sided ideal of $A[[G]]$ generated by the elements $g^2-t(g)g + d(g)$, for all $g\in G$. Since $H$ is a finite group of order prime to $p$ and $P$ is pro-$p$, the surjection $G\twoheadrightarrow H$ has a splitting, so that $G\cong P\rtimes H$. We let 
$$e_{\chi_1}:=\frac{1}{|H|} \sum_{h\in H} [\chi_1](h) h^{-1}, \quad e_{\chi_2}:=\frac{1}{|H|} \sum_{h\in H} [\chi_2](h) h^{-1},$$
where the square brackets denote the Teichm\"uller lifts to $\OO$. We will denote by the same letters the images of these elements in $\CH(A)$.

\section{Cayley--Hamilton algebras}
\begin{lem}\label{compute}  There is an isomorphism of $\gal$-representations:
$$\CH(k)e_{\chi_2} \cong \rho_1, \quad \CH(k)e_{\chi_1}\cong  \rho_2, \quad \CH(k)\cong \rho_1\oplus \rho_2.$$
\end{lem}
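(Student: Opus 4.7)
The plan is to construct explicit $\CH(k)$-linear maps $\phi_1 \colon \CH(k)e_{\chi_2} \to \rho_1$ and $\phi_2 \colon \CH(k)e_{\chi_1} \to \rho_2$, show they are surjective, and deduce they are isomorphisms by a dimension count; the third statement then follows by assembling the two under the decomposition $1 = e_{\chi_1} + e_{\chi_2}$ in $\CH(k)$.

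\textbf{Setting up.} Each $\rho_i$ is a two-dimensional $\gal$-representation with trace $\chi_1+\chi_2$ and determinant $\chi_1\chi_2$, so by the classical Cayley--Hamilton theorem the defining relations of $\CH(k)$ hold on $\rho_i$; hence each $\rho_i$ is naturally a left $\CH(k)$-module. Evaluating the Cayley--Hamilton relation at $h \in H$ gives $(h - \chi_1(h))(h - \chi_2(h)) = 0$; since $|H|$ is prime to $p$, the subalgebra of $\CH(k)$ generated by $H$ is semisimple, and these relations force it to be a quotient of $ke_{\chi_1} \oplus ke_{\chi_2}$. Both idempotents act nontrivially on $\rho_1 \oplus \rho_2$ (they project onto the two distinct $H$-isotypics), so neither vanishes in $\CH(k)$, yielding $1 = e_{\chi_1} + e_{\chi_2}$ and the decomposition $\CH(k) = \CH(k)e_{\chi_1} \oplus \CH(k)e_{\chi_2}$.

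\textbf{Constructing the maps.} Restricting via the splitting $H \hookrightarrow G$, the $H$-representation $\rho_1|_H$ is semisimple and isomorphic to $\chi_1 \oplus \chi_2$; choose a nonzero $v_1 \in e_{\chi_2}\rho_1$. The assignment $xe_{\chi_2} \mapsto xv_1$ is well-defined (since $e_{\chi_2}v_1 = v_1$) and $\CH(k)$-linear, hence $\gal$-equivariant; call it $\phi_1$. Its image is a $\gal$-subrepresentation of $\rho_1$ containing $v_1$, a vector of $H$-weight $\chi_2 \neq \chi_1$, which therefore lies outside the unique proper subrepresentation (the $\chi_1$-line). Thus $\phi_1$ is surjective, and the symmetric construction produces a surjection $\phi_2 \colon \CH(k)e_{\chi_1} \twoheadrightarrow \rho_2$.

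\textbf{Injectivity via dimension bound.} Combining $\phi_1$ and $\phi_2$ yields a surjection $\CH(k) \twoheadrightarrow \rho_1 \oplus \rho_2$ of left $\gal$-modules; since the target has $k$-dimension $4$, all three statements of the lemma follow at once from the bound $\dim_k \CH(k) \leq 4$. To get this bound, decompose $\CH(k) = \bigoplus_{i,j \in \{1,2\}} e_{\chi_i}\CH(k)e_{\chi_j}$ as a sum of $H$-bi-isotypic components. Writing $G = P \rtimes H$ and using $p e_{\chi_j} = e_{\chi_j} + (p-1) e_{\chi_j}$, each block is spanned by elements of the form $e_{\chi_i}(p-1)e_{\chi_j}$ with $p \in P$. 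For $i \neq j$ this assignment factors through the $(\chi_i\chi_j^{-1})$-isotypic component of $\Hom(P, k)$, which is identified with $\Ext^1_{\gal}(\chi_j, \chi_i)$ and hence is $1$-dimensional by the standing hypothesis on $\chi_1\chi_2^{-1}$. For $i = j$ the block must reduce to $k \cdot e_{\chi_i}$, i.e.\ one has to show $e_{\chi_i}(p-1)e_{\chi_i} = 0$ in $\CH(k)$ for all $p \in P$.

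\textbf{Expected main obstacle.} The substantive point is the collapse of the diagonal blocks, which is the residual shadow of the vanishing of the ``reducibility ideal'' in the Bella\"iche--Chenevier generalized matrix algebra structure on $\CH(R^{\mathrm{ps}})$. The approach is to insert $1 = e_{\chi_1} + e_{\chi_2}$ into the middle of $e_{\chi_i}(p-1)e_{\chi_i}$ and combine the Cayley--Hamilton relation $(p-1)^2 = 0$ (which holds because $t(p) = 2$, $d(p) = 1$) with the multiplicativity identity $u_{pq} = u_p + u_q + u_p u_q$ in $P$, rewriting everything in terms of the off-diagonal contributions analyzed above and using the $H$-bi-isotypic weight constraints to force the remaining cross-terms to cancel.
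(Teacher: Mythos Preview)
Your overall architecture matches the paper's: build a surjection $\CH(k)\twoheadrightarrow\rho_1\oplus\rho_2$ and then bound $\dim_k\CH(k)\le 4$. The gap is in how you propose to get that bound.

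\textbf{Circularity between the two block computations.} For the off-diagonal block you assert that $p\mapsto e_{\chi_i}(p-1)e_{\chi_j}$ ``factors through $\Hom(P,k)$''. That requires additivity, i.e.\ $e_{\chi_i}(p-1)(q-1)e_{\chi_j}=0$. Inserting $1=e_{\chi_1}+e_{\chi_2}$ between the two factors, this vanishing is \emph{equivalent} to the diagonal identity $e_{\chi_i}(p-1)e_{\chi_i}=0$ (and the symmetric one for $j$). But your sketch for the diagonal identity says you will ``rewrite everything in terms of the off-diagonal contributions analyzed above'', which closes a circle. The relation $(p-1)^2=0$ by itself only yields
\[
\bigl(e_{\chi_i}(p-1)e_{\chi_i}\bigr)^2 \;=\; -\,e_{\chi_i}(p-1)e_{\chi_j}\cdot e_{\chi_j}(p-1)e_{\chi_i},
\]
and neither this nor the identity $u_{pq}=u_p+u_q+u_pu_q$ forces $e_{\chi_i}(p-1)e_{\chi_i}=0$ without already knowing something about the off-diagonal pieces.

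\textbf{How the paper breaks the circle.} The paper does not use the Cayley--Hamilton relation for $g\in P$ alone; it uses it for the \emph{mixed} elements $gh$ with $g\in P$, $h\in H$, obtaining in $\CH(k)$
\[
gh \;=\; (\chi_1(h)+\chi_2(h)) \;-\; \chi_1\chi_2(h)\,h^{-1}g^{-1}.
\]
Plugging this into $e_{\chi_2}\,g\,e_{\chi_2}=\frac{1}{|H|^2}\sum_{h_1,h_2}\chi_2(h_1h_2)^{-1}h_1gh_2$ and using orthogonality of characters on $H$ gives $e_{\chi_2}\,g\,e_{\chi_2}=e_{\chi_2}$ directly, i.e.\ the diagonal vanishing, with no appeal to the off-diagonal structure. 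Once that is in hand, your additivity for the off-diagonal map follows for free, and then the factoring through the one-dimensional $\Ext^1$ works as you wrote. So the right order is: establish the diagonal identity first via the mixed Cayley--Hamilton relation, then deduce the off-diagonal bound. Alternatively, follow the paper's $I_P$-filtration argument on $M_1=\CH(k)e_{\chi_2}$, which packages the same computation as the statement $e_{\chi_2}I_PM_1=0$.
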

\begin{proof}  The $\gal$-cosocle of $\rho_1\oplus \rho_2$ is $\chi_2\oplus \chi_1$. Since these
characters are distinct, $\rho_1\oplus \rho_2$ is a cyclic $k[[\gal]]$-module. Moreover, elements $g^2- (\chi_1(g)+\chi_2(g))g + \chi_1\chi_2(g)$ kill $\rho_1\oplus \rho_2$, and hence we obtain a surjection of $\gal$-representations $\CH(k)\twoheadrightarrow \rho_1\oplus \rho_2$.

Since the order of $H$ is prime to $p$, $\CH(k)$ is semi-simple as an $H$-representation. If $H$ acts on $v\in \CH(k)$ by a character 
$\psi$, then for all $h\in H$, we have 
$$0= (h-\chi_1(h))(h- \chi_2(h))v= (\psi(h)-\chi_1(h))(\psi(h)-\chi_2(h)) v.$$
Let $H_1:=\{h\in H: \psi(h)=\chi_1(h)\}$, $H_2=\{h\in H: \psi(h)=\chi_2(h)\}$. Then $H_1$ and $H_2$ are subgroups of $H$, such that $H_1 \cup H_2=H$. 
This implies, for example by calculating $|H|$ as $|H_1|+ |H_2| - |H_1\cap H_2|$ and $|H_1| |H_2|/|H_1\cap H_2|$, that either $H_1=H$ or  
or $H_2=H$. Thus either $\psi=\chi_1$ or $\psi=\chi_2$.   

Let $I_P$ be the augmentation ideal in $k[[P]]$. Since $P$ is normal, $\CH(k)/I_P \CH(k)\cong k[H]/ \overline{J}$, where 
$\overline{J}$ is the  two-sided ideal generated by all the elements of the form $h^2-(\chi_1(h)+\chi_2(h)) h + \chi_1\chi_2(h)$, for all $h\in H$.
We know that $k[H]/\overline{J}$ admits $(\rho_1\oplus \rho_2)/ I_P( \rho_1\oplus \rho_2)\cong \chi_2\oplus \chi_1$ as a quotient.
Since  $H$ is abelian, $\chi_1$ and $\chi_2$ occur in $k[H]$ with multiplicity one. Moreover, the argument above shows that no other
character can occur in  $k[H]/ \overline{J}$. Hence, we have an isomorphism of $G$-representations 
$\CH(k)/I_P \CH(k)\cong \chi_2\oplus \chi_1$.

Topological Nakayama's lemma implies that $\CH(k)$ is generated as $k[[P]]$-module by the two elements $e_{\chi_1}$ and $e_{\chi_2}$ defined in the previous section.
Let $M_1$ be the $k[[P]]$-submodule of $\CH(k)$ generated by $e_{\chi_2}$, and let $M_2$ be the $k[[P]]$-submodule 
of $\CH(k)$ generated by $e_{\chi_1}$, so that $M_1= \CH(k)e_{\chi_2}$ and $M_2=\CH(k)e_{\chi_1}$.
We claim that $M_1\cong \rho_1$ and $M_2\cong \rho_2$ as $G$-representations. 
The claim implies that the surjection $\CH(k)\twoheadrightarrow \rho_1\oplus\rho_2$ is an isomorphism.

We will show the claim for $M_1$, the proof for $M_2$ is the same.  We know that $I_P M_1/ I_P^2 M_1$ as an $H$-representation is a direct sum 
of copies of $\chi_1$ and $\chi_2$. Since $\Ext^1_{\gal}(\chi_2, \chi_1)$ is one dimensional, $\chi_1$ appears with multiplicity $1$. If $\chi_2$ appears
in $I_P M_1/ I_P^2 M_1$, then $M_1$ would admit a quotient $N$, which is a non-split extension of $\chi_2$ by itself as a $G$-representation. 
If $p\in P$ is such that $p$ does not act trivially on $N$, and $h\in H$ is such that $\chi_1(h)\neq \chi_2(h)$ then the minimal polynomial of $g:=hp$ acting on  $N$ 
is $(x- \chi_2(g))^2$. Since $(g-\chi_1(g))(g-\chi_2(g))$ kills $\CH(k)$, it will also kill $N$. Since $\chi_1(g)\neq \chi_2(g)$, we get a contradiction. Hence, 
$M_1/I^2_P M_1\cong \rho_1$. Since $I_P M_1/I_P^2 M_1$ is a one dimensional $k$-vector space on which $H$ acts by $\chi_1$, and 
$\Ext^1_{\gal}(\chi_1, \chi_2)$ is one dimensional, the same argument shows that if $I^2_P M_1\neq 0$, then $I^2_P M_1/I_P^3 M$ is one dimensional, 
and $H$ acts on it by $\chi_2$. Hence, it is enough to show that $e_{\chi_2} I_P M_1 =0$, since then Nakayama's lemma would imply that $I^2_P M_1=0$. From 
$(gh)^2- (\chi_1(h)+\chi_2(h)) gh + \chi_1\chi_2(h)=0$, we get that the following holds in $\CH(k)$: 
$$gh= (\chi_1(h)+\chi_2(h))- \chi_1\chi_2(h) h^{-1} g^{-1}, \quad \forall h\in H,  \quad \forall g\in P.$$ 
Then 
\begin{equation} 
\begin{split} 
e_{\chi_2} g e_{\chi_2}&= \frac{1}{|H|^2} \sum_{h_1, h_2\in H} \chi_2(h_1^{-1})\chi_2(h_2^{-1}) h_1 g h_2\\
&=  \frac{1}{|H|^2} \sum_{h_1, h_2\in H} \chi_2(h_1^{-1})\chi_2(h_2^{-1}) h_1 (\chi_1(h_2)+\chi_2(h_2))\\ &- 
 \frac{1}{|H|^2} \sum_{h_1, h_2\in H} \chi_2(h_1^{-1})\chi_1(h_2) h_1 h_2^{-1} g^{-1}\\
 &= e_{\chi_2} - \frac{1}{|H|^2} \sum_{h_1, k\in H}  \chi_2(h_1^{-1}) \chi_1(k^{-1} h_1) k g= e_{\chi_2},
\end{split}
\end{equation} 
where we have used the orthogonality of characters. Hence, $e_{\chi_2} (g-1) e_{\chi_2}=0$ in $\CH(k)$ for all $g\in P$, and so $e_{\chi_2} I_P M_1=0$.
\end{proof}

\begin{lem}\label{k_alg} There is an isomorphism of $k$-algebras:
$$\CH(k)^{\op}\cong \End_{\gal}(\rho_1\oplus \rho_2)\cong \begin{pmatrix} k e_{\chi_1}  & k \Phi_{12} \\ k \Phi_{21} & k e_{\chi_2} \end{pmatrix}, $$
where $\Phi_{12}$, $\Phi_{21}$ are elements of $\CH(k)$ such that $\{e_{\chi_1}$, $\Phi_{21}$, $e_{\chi_2}$, $\Phi_{12}\}$ is a basis of $\CH(k)$ as a $k$-vector
space and  the following relations hold:
$$\Phi_{12}= e_{\chi_1} \Phi_{12}= \Phi_{12} e_{\chi_2},\quad \Phi_{21}= e_{\chi_2} \Phi_{21}= \Phi_{21} e_{\chi_1},\quad  \Phi_{12} \Phi_{21}= \Phi_{21} \Phi_{12}=0.$$

Let $c_{12}, c_{21}: \gal\rightarrow k$ be the functions, such that  an element $g\in \gal$ is mapped to $\chi_1(g)e_{\chi_1} + c_{12}(g) \Phi_{12} + c_{21}(g) \Phi_{21}+ \chi_2(g) e_{\chi_2}$
under the natural map $k[[\gal]]\twoheadrightarrow \CH(k)$. Then $c_{12}$, $c_{21}$ are $1$-cocycles, such that the image of $c_{12}$ in $\Ext^1_{\gal}(\chi_2, \chi_1)$, and the image of 
$c_{21}$ in $\Ext^1_{\gal}(\chi_1, \chi_2)$, span the respective vector space. 
\end{lem}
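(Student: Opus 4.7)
The plan is to deduce everything from Lemma \ref{compute}, which identifies $\CH(k)$ with $\rho_1\oplus \rho_2$ as a left $\CH(k)$-module, by reading off the Peirce decomposition induced by $e_{\chi_1}, e_{\chi_2}$. The first isomorphism is the standard identification $R^{\op}\cong \End_R(R)$ applied to $R=\CH(k)$, combined with the observation that $\CH(k)$-linearity of endomorphisms of $\rho_1\oplus\rho_2$ coincides with $\gal$-linearity, since $\CH(k)$ is a quotient of $k[[\gal]]$.

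Next I would check that $e_{\chi_1}, e_{\chi_2}$ are orthogonal idempotents summing to $1$ in $\CH(k)$: orthogonality and idempotence are inherited from $k[H]$, while summing to $1$ follows from the computation in Lemma \ref{compute} that only the $H$-characters $\chi_1, \chi_2$ appear in $\CH(k)$, so the remaining primitive idempotents of $k[H]$ vanish there. The resulting Peirce decomposition $\CH(k)=\bigoplus_{i,j} e_{\chi_i}\CH(k)e_{\chi_j}$ has each summand one-dimensional: $e_{\chi_i}\CH(k)e_{\chi_j}$ is the $\chi_i$-isotypic component of the $H$-representation $\CH(k)e_{\chi_j}$, which by Lemma \ref{compute} is either $\rho_1$ or $\rho_2$, both of length two with factors $\chi_1, \chi_2$. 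Choosing $\Phi_{12}, \Phi_{21}$ to span the two off-diagonal blocks yields the desired basis, and the relations $\Phi_{12}=e_{\chi_1}\Phi_{12}=\Phi_{12}e_{\chi_2}$ (together with the symmetric ones for $\Phi_{21}$) are immediate. The vanishing $\Phi_{12}\Phi_{21}=\Phi_{21}\Phi_{12}=0$ is the main algebraic input, and I expect it to be the key step; I would argue it representation-theoretically, as follows. Right multiplication by $\Phi_{21}$ is a nonzero $\gal$-equivariant map $\rho_1\to\rho_2$, sending $e_{\chi_2}\in \rho_1$ to $\Phi_{21}\in \rho_2$. Any such map must factor through the cosocle $\chi_2$ of $\rho_1$, because the socle of $\rho_2$ is $\chi_2$; it therefore annihilates the socle $k\Phi_{12}$ of $\rho_1$, yielding $\Phi_{12}\Phi_{21}=0$. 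The identity $\Phi_{21}\Phi_{12}=0$ is symmetric.

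For the cocycle statement, I would decompose the image of $g\in \gal$ in $\CH(k)$ in the Peirce basis. The diagonal coefficients are read off from the quotient $\CH(k)\twoheadrightarrow \CH(k)/I_P\CH(k)\cong\chi_1\oplus\chi_2$, in which $\Phi_{12}$ and $\Phi_{21}$ vanish (they lie in the socles of $\rho_1, \rho_2$, hence in $I_P\CH(k)$); this forces the diagonal to be $\chi_1(g)e_{\chi_1}+\chi_2(g)e_{\chi_2}$, and the off-diagonal coefficients then define $c_{12}(g), c_{21}(g)$. Expanding the identity $\mathrm{image}(gh)=\mathrm{image}(g)\cdot\mathrm{image}(h)$ using the multiplication table above (in which the vanishing of off-diagonal products is essential) produces the cocycle identities
\[
c_{12}(gh)=\chi_1(g)c_{12}(h)+c_{12}(g)\chi_2(h), \qquad c_{21}(gh)=\chi_2(g)c_{21}(h)+c_{21}(g)\chi_1(h).
\]
Finally, to see that $c_{12}$ spans $\Ext^1_{\gal}(\chi_2,\chi_1)$, I would compute the $\gal$-action on $\rho_1=\CH(k)e_{\chi_2}$ in the ordered basis $(\Phi_{12}, e_{\chi_2})$: a direct calculation with the multiplication table shows it is $g\mapsto \bigl(\begin{smallmatrix}\chi_1(g) & c_{12}(g) \\ 0 & \chi_2(g)\end{smallmatrix}\bigr)$, exhibiting $\rho_1$ as the extension of $\chi_2$ by $\chi_1$ classified by $c_{12}$. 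Since $\rho_1$ is non-split and $\Ext^1_{\gal}(\chi_2,\chi_1)$ is one-dimensional, $c_{12}$ is a generator; the argument for $c_{21}$ is symmetric.
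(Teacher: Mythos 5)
Your proposal is correct and follows essentially the same route as the paper: identify $\CH(k)^{\op}$ with $\End_{\gal}(\CH(k))$, use Lemma \ref{compute} to see that each Peirce block $e_{\chi_i}\CH(k)e_{\chi_j}$ is one-dimensional, and read off the cocycles from the triangular matrix of the $\gal$-action on $\CH(k)e_{\chi_2}$ (resp.\ $\CH(k)e_{\chi_1}$). The only difference is that you spell out the step the paper dismisses as immediate, namely $\Phi_{12}\Phi_{21}=\Phi_{21}\Phi_{12}=0$, via the observation that any $\gal$-map $\rho_1\to\rho_2$ factors through the cosocle and kills the socle; this is a correct and welcome elaboration.
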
 
\begin{proof} The multiplication on the right by elements of $\CH(k)$ induces an injection of $k$-algebras $\CH(k)^{\op}\hookrightarrow \End_{\gal}(\CH(k))$. 
Since $\CH(k)\cong \rho_1\oplus \rho_2$ by Lemma \ref{compute}, we deduce that both $\CH(k)$ and $\End_{\gal}(\CH(k))$ 
are $4$-dimensional $k$-vector spaces. Hence  the injection is an isomorphism.  Since $\CH(k) e_{\chi_1}\cong \rho_2$ and $\CH(k)e_{\chi_2}\cong 
\rho_1$ by Lemma \ref{compute}, and the restrictions of $\rho_1$ and $\rho_2$ to $H$ are isomorphic to $\chi_1\oplus \chi_2$, we deduce that 
 $$e_{\chi_1} \CH(k) e_{\chi_1},\quad  e_{\chi_2} \CH(k) e_{\chi_1},\quad  e_{\chi_2} \CH(k)e_{\chi_2}, \quad e_{\chi_1} \CH(k)e_{\chi_2}$$ 
are all $1$-dimensional $k$-vector spaces. The idempotents $e_{\chi_1}$, $e_{\chi_2}$ are basis vectors of $e_{\chi_1} \CH(k) e_{\chi_1}$ and 
$e_{\chi_2} \CH(k) e_{\chi_2}$,  respectively. We choose a basis element $\Phi_{21}$ of $e_{\chi_2} \CH(k) e_{\chi}$ and a basis element  $\Phi_{12}$ of $e_{\chi_1} \CH(k) e_{\chi_2}$. It is immediate that the claimed relations 
are satisfied. 

Let $\bar{g}:=g + J$ be the image of $g\in \gal$ in $\CH(k)$. Since $\{e_{\chi_1}$, $\Phi_{21}$, $e_{\chi_2}$, $\Phi_{12}\}$ is a basis of $\CH(k)$ as a $k$-vector
space, we may write 
$$ \bar{g}= c_{11}(g) e_{\chi_1} + c_{12}(g) \Phi_{12}(g) + c_{21}(g) \Phi_{21}(g) + c_{22}(g) e_{\chi_2}\text{ with } c_{ij}(g)\in k.$$
 The left action 
of $\gal$ on $\CH(k)e_{\chi_1}$ factors through the action of $\CH(k)$ and hence $g$ acts as $\bar{g}$. The multiplication relations imply that  $\bar{g} e_{\chi_1}= c_{11}(g) e_{\chi_1}+ c_{21}(g) \Phi_{21}$ and $\bar{g} \Phi_{21}= c_{22}(g) \Phi_{21}$. Thus the left action of $\gal$ on $\CH(k)e_{\chi_1}$ with respect to the basis $\{ e_{\chi_1}, \Phi_{21}\}$ is given by 
$ g\mapsto \bigl (\begin{smallmatrix} c_{11}(g) & 0 \\ c_{21}(g) & c_{22}(g)\end{smallmatrix} \bigr)$. Since Lemma \ref{compute} tells us that this representation is isomorphic to $\rho_2$, which is
a non-split extension of distinct characters,  
we deduce that $c_{11}(g)=\chi_1(g)$, $c_{22}(g)=\chi_2(g)$ and $c_{21}$ is a $1$-cocyle, whose image in $\Ext^1_{\gal}(\chi_1, \chi_2)$ corresponds to $\rho_2$. Since $\rho_2$ is 
non-split, the image of $c_{21}$ is non-zero. Since by assumption $\chi_1\chi_2^{-1}\neq \Eins, \omega^{\pm 1}$, $\Ext^1_{\gal}(\chi_1, \chi_2)$ is a $1$-dimensional $k$-vector space   and hence the image of $c_{21}$ is a basis vector. The same argument with $\CH(k) e_{\chi_2}$ instead of $\CH(k)e_{\chi_1}$ proves the analogous assertion about $c_{12}$.
\end{proof}

\begin{lem}\label{free} Let $(t, d)\in D^{\ps}(A)$ and let $\CH(A)$ be the corresponding Cayley--Hamilton algebra. Then $e_{\chi_1} \CH(A) e_{\chi_1}$ and 
$e_{\chi_2} \CH(A) e_{\chi_2}$ are free $A$-modules of rank $1$ with generators $e_{\chi_1}$, $e_{\chi_2}$ respectively. 
\end{lem}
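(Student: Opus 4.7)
The plan is to prove the statement for $e_{\chi_1}$; the argument for $e_{\chi_2}$ is identical after interchanging $\chi_1$ and $\chi_2$. I will analyze the $A$-linear map $\iota\colon A\to e_{\chi_1}\CH(A)e_{\chi_1}$, $a\mapsto a\,e_{\chi_1}$, and show that it is both surjective and injective.

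Surjectivity should be a direct application of Nakayama's lemma. Since the pair $(t,d)$ lifts $(\chi_1+\chi_2,\chi_1\chi_2)$, the Cayley--Hamilton construction commutes with base change along $A\twoheadrightarrow k$, so $\CH(A)\otimes_A k\cong \CH(k)$ and
$$e_{\chi_1}\CH(A)e_{\chi_1}\otimes_A k\cong e_{\chi_1}\CH(k)e_{\chi_1}=k\,e_{\chi_1}$$
by Lemma~\ref{k_alg}. As $A$ is artinian, $\mathfrak{m}_A$ is nilpotent and $e_{\chi_1}$ generates the whole module over $A$.

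For injectivity I would reduce modulo the augmentation ideal $I_P$ of $A[[P]]$. Arguing as in Lemma~\ref{compute}, the natural map $A[H]\to \CH(A)/I_P\CH(A)$ is surjective with kernel the two-sided ideal $\bar J$ generated by $h^2-t(h)h+d(h)$ for $h\in H$, identifying the target with $A[H]/\bar J$. The crucial input---and the main obstacle---is a rigidity statement for the restriction $(t,d)|_H$: since $H$ is finite of order prime to $p$, the character $d|_H$ must be the Teichm\"uller lift $[\chi_1\chi_2|_H]$ (because lifts of characters of prime-to-$p$ groups to $A$ are unique), and a slightly more delicate argument---using identity~(iii) together with Hensel's lemma applied to the relations $t(h^n)=2$ forced by the finite order of elements of $H$---shows that $t(h)=[\chi_1(h)]+[\chi_2(h)]$ for all $h\in H$. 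Granting this, the Cayley--Hamilton relation factors in $A[H]$ as $(h-[\chi_1(h)])(h-[\chi_2(h)])$. Decomposing $A[H]=\prod_{\psi\in\widehat H}A\,e_\psi$ into primitive Teichm\"uller idempotents, the $\psi$-component of the relation reads $(\psi(h)-[\chi_1(h)])(\psi(h)-[\chi_2(h)])=0$. For $\bar\psi\notin\{\chi_1|_H,\chi_2|_H\}$ one can choose $h$ making both factors units of $A$, killing $e_\psi$; for $\psi\in\{[\chi_1|_H],[\chi_2|_H]\}$ one factor vanishes identically, so the component survives as a free $A$-module of rank one. Hence $\CH(A)/I_P\CH(A)\cong A\,e_{\chi_1}\oplus A\,e_{\chi_2}$, and the composition
$$A\xrightarrow{\iota}e_{\chi_1}\CH(A)e_{\chi_1}\twoheadrightarrow e_{\chi_1}\bigl(\CH(A)/I_P\CH(A)\bigr)e_{\chi_1}=A\,e_{\chi_1}$$
is the tautological isomorphism $a\mapsto a\,e_{\chi_1}$, forcing $\iota$ to be injective.
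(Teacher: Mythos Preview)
Your surjectivity argument is correct and coincides with the paper's. The injectivity argument, however, has a genuine gap.

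The rigidity claim $t(h)=[\chi_1(h)]+[\chi_2(h)]$ for $h\in H$ is exactly Lemma~\ref{teich}, and in the paper that lemma is proved \emph{using} Lemma~\ref{free}; so you cannot invoke it here without an independent argument. Your sketch does not supply one: Hensel gives roots $\alpha,\beta\in A$ of $X^2-t(h)X+d(h)$ with $\alpha\equiv\chi_1(h)$, $\beta\equiv\chi_2(h)$, and the relation $t(h^m)=t(1)=2$ together with $\alpha\beta=d(h)=[\chi_1\chi_2(h)]$ yields only $(\alpha^m-1)^2=0$, not $\alpha^m=1$. Over $A=k[\varepsilon]$ this is no constraint whatsoever. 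Moreover, even granting rigidity, your description of $\bar J$ is incomplete: $J$ is generated by $g^2-t(g)g+d(g)$ for \emph{all} $g\in G$, so $\bar J\subset A[H]$ contains $h^2-t(g)h+d(g)$ for every lift $g$ of each $h\in H$. The $e_{\chi_1}$-component of such an element is $\bigl([\chi_1(h)]^2-t(g)[\chi_1(h)]+d(g)\bigr)e_{\chi_1}$, which has no reason to vanish for general $g$; so you have not shown that the projection $e_{\chi_1}\CH(A)e_{\chi_1}\to e_{\chi_1}\bigl(\CH(A)/I_P\CH(A)\bigr)e_{\chi_1}$ lands in a free $A$-module.

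The paper bypasses all of this with a one-line trick. Extend $t$ by $A$-linearity and continuity to $t\colon A[[\gal]]\to A$; axiom~(iii) gives $t\bigl(h(g^2-t(g)g+d(g))\bigr)=0$ for all $g,h$, so $t$ factors through $\CH(A)$. Since
\[
t(e_{\chi_1})\equiv \frac{1}{|H|}\sum_{h\in H}\chi_1(h)\bigl(\chi_1(h^{-1})+\chi_2(h^{-1})\bigr)=1\pmod{\mm_A}
\]
by orthogonality, $t(e_{\chi_1})\in A^\times$, and the restriction of $t$ to $e_{\chi_1}\CH(A)e_{\chi_1}$ is an $A$-linear surjection onto $A$, furnishing a left inverse to your map $\iota$.
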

\begin{proof} We will show the statement for $\chi_1$, the proof for $\chi_2$ is the same. We have 
$$e_{\chi_1} \CH(A) e_{\chi_1}/\mm_A e_{\chi_1} \CH(A) e_{\chi_1}= e_{\chi_1} \CH(k)e_{\chi_1},$$ 
which is a $1$-dimensional vector space spanned by $e_{\chi_1}$ by Lemma \ref{k_alg}. Nakayama's lemma implies that 
$e_{\chi_1} \CH(A) e_{\chi_1}$ is a cyclic $A$-module with generator $e_{\chi_1}$. It is enough to construct a surjection of $A$-modules onto $A$. 

Since $t$ is continuous,  we may extend it to a map of $A$-modules, $t: A[[ \gal]]\rightarrow A$. If $h, g\in \gal$ then 
$t( h (g^2- t(g)g +d(g)))= t(hg^2) - t(g) t(gh) +d(g) t(h)=0$, using the property (iii) above. Hence, the map factors through $t: \CH(A)\rightarrow A$. Since $t(e_{\chi_1}) \pmod{\mm_A}= \tr \rho_2(e_{\chi_1})=1$, $t(e_{\chi_1})$ is a unit in $A$ and 
the map is surjective. Hence, $t$ induces a surjection of $A$-modules 
$e_{\chi_1} \CH(A) e_{\chi_1}$ onto $A$.
\end{proof}

\begin{prop}\label{tangent_space_seq} Let $k[\varepsilon]$ be the dual numbers over $k$. There is an exact sequence of $k$-vector spaces: 
\begin{equation}\label{tangent}
0\rightarrow \Ext^1_{\gal}(\chi_1, \chi_1)\oplus \Ext^1_{\gal}(\chi_2, \chi_2)\rightarrow D^{\ps}(k[\varepsilon])\rightarrow
 \Ext^1_{\gal}(\chi_1, \chi_2)\otimes \Ext^1_{\gal}(\chi_2, \chi_1).
 \end{equation}
 \end{prop}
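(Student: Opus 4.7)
The plan is to construct both arrows and verify exactness using the Cayley--Hamilton algebra structure established in the preceding lemmas. For the first arrow, identify $\Ext^1_{\gal}(\chi_i,\chi_i)$ with $\Hom_{\cont}(\gal,k)$ and send $(u_1,u_2)$ to $\alpha(u_1,u_2):=(\tilde\chi_1+\tilde\chi_2,\,\tilde\chi_1\tilde\chi_2)$, where $\tilde\chi_i:=\chi_i(1+\varepsilon u_i):\gal\to k[\varepsilon]^{\times}$. This is the trace and determinant of the honest representation $\tilde\chi_1\oplus\tilde\chi_2$, hence lies in $D^{\ps}(k[\varepsilon])$. For injectivity, if $\alpha(u_1,u_2)$ is trivial, expanding in $\varepsilon$ gives $u_1+u_2=0$ and $u_1\chi_1+u_2\chi_2=0$, hence $u_1(\chi_1-\chi_2)\equiv 0$. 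Thus $u_1$ vanishes outside $H_0:=\ker(\chi_1\chi_2^{-1})$; choosing any $g_0\notin H_0$ and $h\in H_0$, we have $g_0h\notin H_0$, so $u_1(g_0h)=u_1(g_0)+u_1(h)=0$ forces $u_1(h)=0$, hence $u_1\equiv u_2\equiv 0$.

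For the second arrow, given $(t,d)\in D^{\ps}(k[\varepsilon])$, form $\CH(k[\varepsilon])$. By Lemma \ref{compute} the idempotents $e_{\chi_1},e_{\chi_2}$ satisfy $e_{\chi_1}+e_{\chi_2}=1$ in $\CH(k)$ and lift to orthogonal idempotents in $\CH(k[\varepsilon])$, yielding the Peirce decomposition $\CH(k[\varepsilon])=A_1\oplus B\oplus C\oplus A_2$, with $A_i=e_{\chi_i}\CH(k[\varepsilon])e_{\chi_i}\cong k[\varepsilon]$ (Lemma \ref{free}) and $B=e_{\chi_1}\CH(k[\varepsilon])e_{\chi_2}$, $C=e_{\chi_2}\CH(k[\varepsilon])e_{\chi_1}$ cyclic over $k[\varepsilon]$ by Nakayama and Lemma \ref{k_alg}. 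Pick generators $\widetilde\Phi_{12},\widetilde\Phi_{21}$ of $B,C$ lifting $\Phi_{12},\Phi_{21}$, and decompose $\bar g=\tilde\chi_1(g)+b(g)+c(g)+\tilde\chi_2(g)$ according to Peirce. The product $b(g)c(h)\in A_1$ vanishes modulo $\varepsilon$ since $\Phi_{12}\Phi_{21}=0$ (Lemma \ref{k_alg}), hence lies in $\varepsilon A_1\cong\varepsilon k$. Writing $\widetilde\Phi_{12}\widetilde\Phi_{21}=\varepsilon\lambda\,e_{\chi_1}$ with $\lambda\in k$ and $b(g)=\xi_{12}(g)\widetilde\Phi_{12}$, $c(g)=\xi_{21}(g)\widetilde\Phi_{21}$, one obtains $\varepsilon^{-1}b(g)c(h)=\lambda\,c_{12}(g)c_{21}(h)$. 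Rescaling the $\widetilde\Phi_{ij}$ by units of $k[\varepsilon]$ rescales $\lambda$ inversely to the corresponding rescaling of the cocycles $c_{ij}$, so $\beta(t,d):=\lambda\cdot[c_{21}]\otimes[c_{12}]$ is a canonical element of $\Ext^1_{\gal}(\chi_1,\chi_2)\otimes\Ext^1_{\gal}(\chi_2,\chi_1)$, using Lemma \ref{k_alg} to identify $[c_{21}],[c_{12}]$ with spanning classes of the respective lines.

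For exactness: if $(t,d)=\alpha(u_1,u_2)$, the representation $\tilde\chi_1\oplus\tilde\chi_2$ gives a surjection $\CH(k[\varepsilon])\twoheadrightarrow k[\varepsilon]\times k[\varepsilon]$ killing $B,C$ (with $e_{\chi_i}$ mapping to the $i$th idempotent), whence $\widetilde\Phi_{12}\widetilde\Phi_{21}\mapsto 0$; comparing with the image $(\varepsilon\lambda,0)$ forces $\lambda=0$. Conversely, $\beta(t,d)=0$ forces $\lambda=0$ and hence $\widetilde\Phi_{12}\widetilde\Phi_{21}=0$; trace cyclicity (property (ii)) yields also $\widetilde\Phi_{21}\widetilde\Phi_{12}=0$, and so $b(g)c(h)=c(g)b(h)=0$ for all $g,h$. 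The Peirce identity $\tilde\chi_i(gh)=\tilde\chi_i(g)\tilde\chi_i(h)$ then makes each $\tilde\chi_i:\gal\to k[\varepsilon]^{\times}$ a character lifting $\chi_i$. Trace cyclicity gives $t(\widetilde\Phi_{12})=t(e_{\chi_2}e_{\chi_1}\widetilde\Phi_{12})=0$ and likewise $t(\widetilde\Phi_{21})=0$, while $t(e_{\chi_i})=1$ (Lemma \ref{free}), so $t=\tilde\chi_1+\tilde\chi_2$; the Cayley--Hamilton relation $\bar g^2-t(g)\bar g+d(g)=0$ read on the $A_1$-component then gives $d=\tilde\chi_1\tilde\chi_2$, placing $(t,d)$ in $\Image(\alpha)$.

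The technically delicate step is the canonical definition of $\beta$: showing that the scalar $\lambda$, combined with the cocycle classes $[c_{12}],[c_{21}]$, yields an element of the tensor product of Ext groups independent of the auxiliary choices of lifts $\widetilde\Phi_{ij}$. This closely follows Bella\"iche's argument in \cite{bel}.
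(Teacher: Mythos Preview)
Your approach is essentially the paper's (both follow Bella\"iche), but there is one genuine gap in your exactness argument. You assert $t(e_{\chi_i})=1$ citing Lemma~\ref{free}; that lemma only shows $t(e_{\chi_i})$ is a \emph{unit} in $k[\varepsilon]$ (congruent to $1$ mod $\varepsilon$), which is not enough to conclude $t=\tilde\chi_1+\tilde\chi_2$ from the Peirce expansion. The identity $t(e_{\chi_i})=1$ is true, but it requires either the rigidity of $2$-dimensional determinants of $H$ proved later in Lemma~\ref{teich}, or the Cayley--Hamilton identity for the idempotent $e_{\chi_i}$ itself (i.e.\ for arbitrary algebra elements, not just group elements), neither of which you invoke.

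The paper sidesteps this entirely: once $\tilde\Phi_{21}\tilde\Phi_{12}=0$, it observes that $k[\varepsilon]\tilde\Phi_{12}$ is a $\CH(k[\varepsilon])$-submodule of $\CH(k[\varepsilon])e_{\chi_2}$, and the quotient is free of rank~$1$ with $\gal$ acting by a character $\tilde\chi_2$. Applying the defining Cayley--Hamilton relation (for group elements only) to this one-dimensional module gives $\tilde\chi_2(g)^2-t(g)\tilde\chi_2(g)+d(g)=0$, hence $t=\tilde\chi_2+d\tilde\chi_2^{-1}$ with $d\tilde\chi_2^{-1}$ a deformation of $\chi_1$. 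You could equally well fix your argument this way: your Peirce computation already gives $\tilde\chi_1(g)^2-t(g)\tilde\chi_1(g)+d(g)=0$ on the $A_1$-component, which immediately yields $t=\tilde\chi_1+d\tilde\chi_1^{-1}$, so the passage through $t(e_{\chi_i})=1$ is unnecessary. Finally, you do not check that $\beta$ is $k$-linear, which is part of the statement; the paper verifies this by writing $t(e_{\chi_2}ge_{\chi_1}he_{\chi_2})=\varepsilon t_1(e_{\chi_2}ge_{\chi_1}he_{\chi_2})$ and using the explicit description of the $k$-vector space structure on $D^{\ps}(k[\varepsilon])$.
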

 \begin{proof} The proof is essentially the same as \cite[Thm. 2]{bel}. We supply the details, since in \cite{bel} the  reference \cite{bel_che} is used, where some assumptions on $p$  are made.
 
 We will define the first non-trivial arrow in \eqref{tangent}. Let $D_{\chi_1}$ and $D_{\chi_2}$ be the deformation functors of $\chi_1$ and $\chi_2$, respectively. Sending 
 $\{\tilde{\chi}_1, \tilde{\chi}_2\}\mapsto (\tilde{\chi}_1+\tilde{\chi}_2, \tilde{\chi}_1\tilde{\chi}_2)$ induces a map
 $D_{\chi_1}(k[\varepsilon])\times D_{\chi_2}(k[\varepsilon])\rightarrow D^{\ps}(k[\varepsilon])$. We may recover $\tilde{\chi}_1$ and $\tilde{\chi}_2$ from $t$, as 
 $$ \tilde{\chi}_1(g)= t(e_{\chi_1} g), \quad \tilde{\chi}_2(g)=t(e_{\chi_2} g).$$
 Hence, the map is injective. The first arrow is obtained by identifying $D_{\chi_1}(k[\varepsilon])$ and $D_{\chi_2}(k[\varepsilon])$ with 
 $\Ext^1_{\gal}(\chi_1, \chi_1)$ and $\Ext^1_{\gal}(\chi_2, \chi_2)$ respectively. 
 
 We will define the last arrow in \eqref{tangent}. Let $(t,d)\in D^{\ps}(k[\varepsilon])$ and let $\CH(k[\varepsilon])$ be the corresponding Cayley--Hamilton algebra. Reducing modulo $\varepsilon$ induces an isomorphism  $\CH(k[\varepsilon])/ \varepsilon \CH(k[\varepsilon])\cong \CH(k)$. Let 
 $\tilde{\Phi}_{12}, \tilde{\Phi}_{21}\in \CH(k[\varepsilon])$ be lifts of $\Phi_{12}$ and $\Phi_{21}$ such that $\tilde{\Phi}_{12}= e_{\chi_1}\tilde{\Phi}_{12}= 
\tilde{\Phi}_{12} e_{\chi_2}$,  $\tilde{\Phi}_{21}= e_{\chi_2}\tilde{\Phi}_{21}= 
\tilde{\Phi}_{21} e_{\chi_2}$. Then $\tilde{\Phi}_{12}$ is a generator for $e_{\chi_1} \CH(k[\varepsilon])e_{\chi_2}$ and $\tilde{\Phi}_{21}$ is a generator 
of $e_{\chi_2} \CH(k[\varepsilon]) e_{\chi_1}$ as an $A$-module. Since $e_{\chi_2} \tilde{\Phi}_{21}\tilde{\Phi}_{12} e_{\chi_2}=  \tilde{\Phi}_{21}\tilde{\Phi}_{12}$, 
and $\Phi_{12} \Phi_{21}=0$, there is a unique $\lambda_{t,d}\in k$, such that  $\tilde{\Phi}_{21}\tilde{\Phi}_{12}= \varepsilon \lambda_{t,d} e_{\chi_2}$. Since
$(1+ \varepsilon \mu) \tilde{\Phi}_{21}\tilde{\Phi}_{12}= (1+\varepsilon \mu) \varepsilon \lambda_{t,d} e_{\chi_2}= \varepsilon \lambda_{t,d} e_{\chi_2}$, $\lambda_{t,d}$ does not depend on the choice of the lift. For all $g, h\in \gal$ we have 
$$ e_{\chi_2} g e_{\chi_1} h e_{\chi_2}= \tilde{c}_{21}(g) \tilde{\Phi}_{21} \tilde{c}_{12}(h) \tilde{\Phi}_{21}=  \varepsilon \lambda_{t,d} c_{21}(g) c_{12}(h) e_{\chi_2}$$
and hence 
$$\varepsilon \lambda_{t, d}c_{21}(g) c_{12}(h) e_{\chi_2}= t(e_{\chi_2} g e_{\chi_1} h e_{\chi_2}).$$
 The map $(t, d)\mapsto \lambda_{t,d} c_{21} c_{12}$ defines the last arrow.  
 
 Given  a function $f: \gal\rightarrow k[\varepsilon]$, we define $f_0, f_1: G\rightarrow k$ by $f(g)= f_0(g) +\varepsilon f_1(g)$, so that  $t_0=\chi_1+\chi_2$
 and $d_0=\chi_1\chi_2$. The $k$-vector space structure in $D^{\ps}(k[\varepsilon])$ is given by 
 $\lambda (t, d)+ \mu( t', d')= (t_0 +\varepsilon(\lambda t_1 + \mu t'_1), d_0+ \varepsilon(\lambda d_1+ \mu d'_1))$. Since 
 $t(e_{\chi_2} g e_{\chi_1} h e_{\chi_2})=\varepsilon t_1(e_{\chi_2} g e_{\chi_1} h e_{\chi_2})$, the last arrow in \eqref{tangent} is $k$-linear. 
 
 If $t=\tilde{\chi}_1+\tilde{\chi}_2$ then using orthogonality of characters we get that $t(e_{\chi_2} g e_{\chi_1} h e_{\chi_2})=0$, for all $g, h\in \gal$. Hence \eqref{tangent} is a complex. 
 
 If $(t,d)$ is mapped to zero, then $\tilde{\Phi}_{21} \tilde{\Phi}_{12}=0$. Hence, the $k[\varepsilon]$-module generated by $\tilde{\Phi}_{12}$ is stable 
 under the action of $\CH(k[\varepsilon])$. It follows from Lemma \ref{free} that the quotient $\CH(k[\varepsilon])e_{\chi_2}/ k[\varepsilon] \tilde{\Phi}_{12}$
 is a free $k[\varepsilon]$-module of rank $1$ generated by $e_{\chi_2}$. The group $\gal$ acts on the module by the character $\tilde{\chi}_2: \gal\rightarrow 
 k[\varepsilon]^{\times}$, which is a deformation of $\chi_2$. Since $g^2-t(g)g +d(g)$ will kill the module, we obtain that 
 $\tilde{\chi}_2(g)^2 - t(g) \tilde{\chi}_2(g) + d(g)=0$ in $k[\varepsilon]$. Hence, $t(g)= \tilde{\chi}_2(g) + d(g) \tilde{\chi}_2(g)^{-1}$. Since 
 $d \tilde{\chi}_2^{-1}$ is a deformation of $\chi_1$, we deduce that \eqref{tangent} is exact. 
  \end{proof}

  \begin{lem}\label{tangent_space} If $p=2$ then the last arrow in \eqref{tangent} is zero. Hence, $\dim_k D^{\ps}(k[\varepsilon])=6$.
  \end{lem}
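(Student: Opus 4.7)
The plan is to adapt Bella\"iche's argument to characteristic $2$: show that the scalar $\lambda_{t,d} \in k$, defined in the proof of Proposition \ref{tangent_space_seq} by $\tilde\Phi_{21}\tilde\Phi_{12} = \varepsilon \lambda_{t,d} e_{\chi_2}$, vanishes for every $(t,d) \in D^{\ps}(k[\varepsilon])$. The dimension count will then follow from exactness of \eqref{tangent} and the injectivity of its first arrow.

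The key new input available only in characteristic $2$ comes from specializing property (iii) to $h=g$, which gives $t(g^2) = t(g)^2 - t(1) d(g) = t(g)^2$ because $t(1) = 2 = 0$ in $k$. Writing $t = t_0 + \varepsilon t_1$ with $t_0 = \chi_1 + \chi_2$ and using $(a + \varepsilon b)^2 = a^2$ in $k[\varepsilon]$ when $2 = 0$, this identity linearizes to $t_1(g^2) = 0$ for every $g \in \gal$, a constraint which is vacuous for $p > 2$ and which will be the source of the additional vanishing.

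From the definition of $\lambda_{t,d}$ together with Lemma \ref{free} (which shows that $t(e_{\chi_2})$ is a unit, congruent to $1$ modulo $\varepsilon$), we have
$$\varepsilon\, \lambda_{t,d}\, c_{21}(g)\, c_{12}(h)\, t(e_{\chi_2}) \;=\; t(e_{\chi_2}\, g\, e_{\chi_1}\, h\, e_{\chi_2}),$$
so $\lambda_{t,d} = 0$ is equivalent to the vanishing of the $\varepsilon$-coefficient of the right-hand side for all $g, h \in \gal$. I would compute this $\varepsilon$-coefficient by expanding the Teichm\"uller sums defining $e_{\chi_1}, e_{\chi_2}$ and repeatedly applying property (iii) to rewrite each multi-argument trace as a polynomial in $t$- and $d$-values at single group elements. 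The orthogonality of $H$-characters used in the proofs of Lemmas \ref{compute} and \ref{free} kills everything except contributions in which $t_1$ is evaluated on squares $g_0^2 \in P$, and the identity $t_1(g_0^2) = 0$ makes these vanish. I expect the combinatorial bookkeeping of the expansion to be the main obstacle; conceptually, the point is simply that the symmetric quantity $t_1(g^2)$ is automatically zero in characteristic $2$, and this is exactly where the non-trivial contribution to $\lambda_{t,d}$ came from when $p>2$.

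With $\lambda_{t,d} = 0$ established, exactness of \eqref{tangent} yields an isomorphism $\Ext^1_{\gal}(\chi_1,\chi_1) \oplus \Ext^1_{\gal}(\chi_2,\chi_2) \iso D^{\ps}(k[\varepsilon])$. Each $\Ext^1_{\gal}(\chi_i,\chi_i) = H^1(\gal, k)$, and the local Euler--Poincar\'e formula, combined with $\mu_2 \subset \Qp$ (so that $\dim_k H^0(\gal,k) = \dim_k H^2(\gal,k) = 1$), gives $\dim_k H^1(\gal, k) = 3$. Hence $\dim_k D^{\ps}(k[\varepsilon]) = 6$.
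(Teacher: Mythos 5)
There is a genuine gap, and it is not merely that the ``combinatorial bookkeeping'' is left undone: the strategy cannot work as described. Your computation would use only (a) the determinant axioms (i)--(iii) and their linearizations (in particular $t_1(g^2)=0$, which is correct in characteristic $2$), and (b) orthogonality of characters of $H$. But these ingredients are available verbatim for \emph{any} profinite group of the form $G=P\rtimes H$ with $P$ pro-$2$, $H$ of odd order and $\Ext^1_G(\chi_1,\chi_2)$, $\Ext^1_G(\chi_2,\chi_1)$ one-dimensional, whereas the conclusion is false in that generality. For instance, if $P$ is a free pro-$2$ group (so $H^2(P,k)=0$), the Yoneda/cup products $\Ext^1(\chi_1,\chi_2)\otimes\Ext^1(\chi_2,\chi_1)\rightarrow\Ext^2(\chi_i,\chi_i)$ vanish, one can build a genuine representation $g\mapsto\bigl(\begin{smallmatrix}\chi_1(g)&b(g)\\ \varepsilon c(g)&\chi_2(g)\end{smallmatrix}\bigr)$ over $k[\varepsilon]$ mixing the two characters, and its $(t,d)$ has $\lambda_{t,d}\neq 0$. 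So no identity deducible from (a) and (b) alone can force $\lambda_{t,d}=0$; any proof must input something specific to $\gal=\mathcal G_{\mathbb Q_2}$. That input is exactly what the paper uses: for $p=2$ the mod-$p$ cyclotomic character is trivial, so local Tate duality makes the cup product $H^1(G,\chi_1\chi_2^{-1})\otimes H^1(G,\chi_1^{-1}\chi_2)\rightarrow H^2(G,k)$ an isomorphism of one-dimensional spaces; interpreting this as a Yoneda product shows no uniserial $G$-representation with layers $\chi_2,\chi_1,\chi_2$ exists, while a nonzero value of the last arrow would produce one (namely $\CH(k[\varepsilon])e_{\chi_2}/\varepsilon\tilde\Phi_{12}$). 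Your diagnosis of the $p>2$ case is also off: there the last arrow is typically nonzero because $H^2(\gal,k)=0$ (as $\omega\neq\Eins$), not because of contributions from $t_1(g^2)$.

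The final dimension count is fine once the vanishing is granted: the Euler--Poincar\'e formula with $\mu_2\subset\mathbb Q_2$ gives $\dim_k H^1(\gal,k)=3$, hence $\dim_k D^{\ps}(k[\varepsilon])=6$, and this matches the paper. But the heart of the lemma is the vanishing of the last arrow, and for that you need to bring in the non-degeneracy of the cup product pairing (equivalently, the Demushkin structure of the relevant quotient of $\gal$), not just the determinant identities in characteristic $2$.
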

  \begin{proof} The cup product induces a non-degenerate $H$-equivariant alternating pairing 
  \begin{equation}\label{pairing}
  H^1(P, k)\times H^1(P, k)\overset{\cup}{\rightarrow} H^2(P, k).
  \end{equation}
  Since $p=2$, the cyclotomic character modulo $p$ is trivial, and hence $H$ acts trivially on $H^2(P, k)$. Since the order of $H$ 
  is prime to $p$, for any character $\psi$ of $H$, $H^1(G, \psi)$ is the $\psi$-isotypic subspace of $H^1(P, k)$.
   Since $\chi_1\chi_2^{-1}\neq \Eins$ both $H^1(G, \chi_1\chi_2^{-1})$ and $H^1(G, \chi_1^{-1}\chi_2)$ are one dimensional, and
   the pairing is non-degenerate and $H$-equivariant, \eqref{pairing} induces an isomorphism:
  \begin{equation}\label{iso_cup}
  H^1(G, \chi_1\chi_2^{-1})\otimes H^1(G, \chi_1^{-1} \chi_2)\overset{\cong}{\rightarrow} H^2(G, k).
  \end{equation}
 By interpreting the cup product as Yoneda pairing, we deduce from \eqref{iso_cup} that there does not exist 
 a representation $\tau$ of $G$, such that the socle and the cosocle of $\tau$ is isomorphic to $\chi_2$ and 
 and the semi-simplification is isomorphic to $\chi_2\oplus \chi_1\oplus \chi_2$.
 
 If the last arrow in \eqref{tangent} was non-zero, then we could construct such $\tau$ as follows: let 
 $\CH(k[\varepsilon])$ be the Cayley-Hamilton algebra, which corresponds to a pair $(t,d)\in D^{\ps}(k[\varepsilon])$, which
 does not map to zero under the last arrow in \eqref{tangent}. Then $\CH(k[\varepsilon])e_{\chi_2}/\varepsilon \tilde{\Phi}_{12}$
 would be such representation. 
  \end{proof}  
  
Sending a representation $\rho$ to the pair $(\tr \rho, \det \rho)$ induces a natural transformations $D_1\rightarrow D^{\ps}$, $D_2\rightarrow D^{\ps}$, 
and hence homomorphisms of local $\OO$-algebras $t_1: R^{\ps}\rightarrow R_1$, $t_2: R^{\ps}\rightarrow R_2$.
  
 \begin{prop}\label{trace_rep}Sending a representation $\rho$ to the pair $(\tr \rho, \det \rho)$ induces isomorphisms between the local 
 $\OO$-algebras
 $t_1: R^\ps \overset{\cong}{\rightarrow} R_1$, $t_2: R^\ps\overset{\cong}{\rightarrow} R_2.$
\end{prop}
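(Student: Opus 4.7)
The plan is to prove that $t_1$ is an isomorphism; the argument for $t_2$ is identical. As recalled in the introduction, Chenevier \cite{che_lieu} has shown that $t_1$ is surjective, which (since both $R^\ps$ and $R_1$ are complete local noetherian $\OO$-algebras with residue field $k$) is equivalent to the injectivity of the induced tangent map $D_1(k[\varepsilon]) \hookrightarrow D^{\ps}(k[\varepsilon])$. By a standard application of Nakayama's lemma, to upgrade surjectivity to an isomorphism it then suffices to verify that these two $k$-vector spaces have the same dimension.

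The tangent space $D_1(k[\varepsilon])$ coincides with $H^1(\gal, \ad{\rho_1})$. Since $\rho_1$ has scalar endomorphisms, $H^0(\gal, \ad{\rho_1}) = k$. By local Tate duality $H^2(\gal, \ad{\rho_1}) \cong H^0(\gal, \ad{\rho_1} \otimes \omega)^{\ast}$. If $p > 2$, then $\omega$ is nontrivial, and the Jordan--H\"older factors of $\ad{\rho_1} \otimes \omega$ are $\omega, \omega, \chi_1\chi_2^{-1}\omega, \chi_2\chi_1^{-1}\omega$, none of which is trivial by the hypothesis $\chi_1\chi_2^{-1} \neq \Eins, \omega^{\pm 1}$; hence $H^2 = 0$. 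If $p = 2$, then $\omega = \Eins$ and $H^2 \cong H^0(\gal, \ad{\rho_1})^{\ast} = k$. Feeding these into the local Euler--Poincar\'e formula over $\Qp$ yields
$$ \dim_k D_1(k[\varepsilon]) = \dim H^0(\gal, \ad{\rho_1}) + \dim H^2(\gal, \ad{\rho_1}) + 4 = \begin{cases} 5 & \text{if } p > 2,\\ 6 & \text{if } p = 2.\end{cases} $$

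On the pseudo-representation side, the case $p = 2$ is handled directly by Lemma \ref{tangent_space}, which gives $\dim_k D^{\ps}(k[\varepsilon]) = 6$, already matching. For $p > 2$, the same cohomological computation yields $\dim_k \Ext^1_{\gal}(\chi_i, \chi_i) = \dim H^1(\gal, k) = 2$, while $\dim_k \Ext^1_{\gal}(\chi_i, \chi_j) = 1$ for $i \neq j$ by the genericity assumption, so the exact sequence of Proposition \ref{tangent_space_seq} gives an upper bound $\dim_k D^{\ps}(k[\varepsilon]) \leq 2 + 2 + 1\cdot 1 = 5$; the injection $D_1(k[\varepsilon]) \hookrightarrow D^{\ps}(k[\varepsilon])$ then forces equality. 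In either case the tangent spaces have the same dimension, so $t_1$ is an isomorphism of local $\OO$-algebras.

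The substantive input is Lemma \ref{tangent_space}, which settles the delicate $p = 2$ case through the cup product computation; the rest is a routine application of Tate duality and the local Euler--Poincar\'e formula, so I do not expect any further obstacle.
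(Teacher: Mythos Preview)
There is a genuine gap at the step ``by a standard application of Nakayama's lemma, to upgrade surjectivity to an isomorphism it then suffices to verify that these two $k$-vector spaces have the same dimension.'' This is false in general: a surjection of complete local noetherian $\OO$-algebras inducing a bijection on tangent spaces need not be an isomorphism (consider $\OO[[x]]\twoheadrightarrow \OO[[x]]/(x^2)$). Equal tangent dimensions only tell you that the kernel lies in $\mm_{R^{\ps}}^2$, not that it vanishes.

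What rescues the argument is a structural fact about $R_1$ that you have not invoked. For $p>2$ your own computation $H^2(\gal,\ad{\rho_1})=0$ already gives it: unobstructedness means $R_1\cong \OO[[x_1,\ldots,x_5]]$, and then a surjection from $R^{\ps}$ (itself a quotient of a $5$-variable power series ring by the tangent-space bound) onto a $5$-variable power series ring must be an isomorphism. You should make this explicit. For $p=2$, however, $H^2$ is one-dimensional, so $R_1$ is only known a priori to be a quotient of $\OO[[x_1,\ldots,x_6]]$ by at most one relation, and the same is all you know about $R^{\ps}$; equality of tangent dimensions is then genuinely insufficient. The paper closes this gap by importing from Chenevier \cite{che_lieu} the finer statement that $R_1$ is formally smooth of relative dimension $5$ over the group algebra $\Lambda=\OO[\{\pm 1\}]$ (via the determinant), and that $t_1$ is a $\Lambda$-algebra map. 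Formal smoothness of $R_1$ over $\Lambda$ produces a section $R_1\rightarrow R^{\ps}$ of $t_1$; this section is surjective by your tangent-space equality (here Nakayama does apply), hence $t_1$ is an isomorphism. You need to add this input, or an equivalent one such as $\OO$-flatness of $R^{\ps}$ combined with Chenevier's $t_1[1/2]$ being an isomorphism.
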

\begin{proof} If $p>2$ this is shown in \cite[Prop.B.17]{cmf}, using \cite[Thm. 2]{bel} and the fact that $R_1$ and $R_2$ are formally smooth as an input. We assume that  $p=2$. Chenevier has shown in \cite[Cor.4.4]{che_lieu} that the maps are surjective and become isomorphisms after inverting $2$. The result follows by combining Chenevier's argument 
with Lemma \ref{tangent_space} as we now explain. The group homomorphisms $\det \rho_1^{\univ}: \gal\rightarrow R_1^{\times}$ factors through the maximal abelian quotient of $\gal$.
Composing it with the Artin map of local class field theory, we obtain a continuous group homomorphism $\mathbb Q_2^{\times}\rightarrow R_1^{\times}$. The restriction 
of this map to the subgroup $\mu:=\{\pm 1\}\subset \mathbb Q_2^{\times}$ induces a homomorphism of $\OO$-algebras 
$\Lambda \rightarrow R_1$, where $\Lambda:=\OO[\mu]$ is the group algebra of $\mu$ over $\OO$. The same argument with $d^{\univ}$ instead of $\det \rho_1^{\univ}$, also makes $R^{\ps}$ into a $\Lambda$-algebra. Moreover, 
it is immediate that $t_1$ is a homomorphism of $\Lambda$-algebras. Chenevier shows that $R_1$ is formally smooth over $\Lambda$ of dimension $5$, 
and that $t_1$ is surjective. He proves this last assertion by checking that the map $D_1(k[\varepsilon])\rightarrow D^{\ps}(k[\varepsilon])$ is injective. It follows from 
Lemma \ref{tangent_space}, that both have the same dimension as $k$-vector space, thus the map is bijective. Since $t_1$ is a map of $\Lambda$-algebras and 
$R_1$ is formally smooth over $\Lambda$, we conclude that $R^{\ps}\cong R_1$. In particular, $R^{\ps}\cong \Lambda[[x_1, \ldots, x_5]]$, where  $\Lambda\cong \OO[[y]]/((1+y)^2-1)$.
\end{proof}

  \begin{lem}\label{teich} The restriction of $(t^{\univ}, d^{\univ})$ to $H$ is equal to $([\chi_1] + [\chi_2], [\chi_1\chi_2])$, where 
  square brackets denote the Teichm\"uller lift to $\OO$.
  \end{lem}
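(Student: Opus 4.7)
The plan is to deduce this from Proposition \ref{trace_rep}, which identifies $(t^{\univ}, d^{\univ})$ with $(\tr \rho_1^{\univ}, \det \rho_1^{\univ})$. Since $\rho_1^{\univ}$ factors through $G$ and $H$ embeds into $G$ via the splitting $G \cong P \rtimes H$, it suffices to show that
\begin{equation*}
\rho_1^{\univ}|_H \cong [\chi_1] \oplus [\chi_2]
\end{equation*}
as representations of $H$.

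The group $H$ embeds into $k^{\times} \times k^{\times}$, so $|H|$ is prime to $p$, and a direct manipulation of the definition of $e_{\chi_i}$ gives the relation $h \cdot e_{\chi_i} = [\chi_i](h)\, e_{\chi_i}$ for every $h \in H$. Applying $\rho_1^{\univ}$, the element $\rho_1^{\univ}(e_{\chi_i}) \in M_2(R^{\ps})$ is an idempotent whose reduction modulo the maximal ideal is the rank-one projection of $\rho_1|_H \cong \chi_1 \oplus \chi_2$ onto its $\chi_i$-isotypic part. Since idempotents in matrix algebras over local rings lift with preserved rank, the image of $\rho_1^{\univ}(e_{\chi_i})$ is a free $R^{\ps}$-module of rank one on which $H$ acts via $[\chi_i]$. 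The idempotents $e_{\chi_1}$ and $e_{\chi_2}$ are orthogonal (this uses $\chi_1 \neq \chi_2$), and a rank count then produces the desired decomposition.

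Taking trace and determinant yields $t^{\univ}|_H = [\chi_1] + [\chi_2]$ and $d^{\univ}|_H = [\chi_1][\chi_2] = [\chi_1\chi_2]$, the last equality being multiplicativity of the Teichm\"uller lift. There is no serious obstacle: the argument is a direct combination of Proposition \ref{trace_rep} with the standard uniqueness of multiplicative lifts of characters of groups of order prime to $p$.
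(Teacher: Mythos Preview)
Your proof is correct, but it takes a different route from the paper's. The paper argues intrinsically in the determinant formalism: it introduces the deformation functor $D^{\ps}_H$ of $2$-dimensional determinants of $H$ lifting $(\chi_1+\chi_2,\chi_1\chi_2)$, and shows that its tangent space $D^{\ps}_H(k[\varepsilon])$ is trivial by proving (via Lemma~\ref{free} and Nakayama) that the associated Cayley--Hamilton algebra $\CH(k[\varepsilon])$ is free of rank~$2$ over $k[\varepsilon]$ with basis $\{e_{\chi_1},e_{\chi_2}\}$; together with the characteristic-zero point $([\chi_1]+[\chi_2],[\chi_1\chi_2])$ this forces $R^{\ps}_H=\OO$. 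You instead invoke Proposition~\ref{trace_rep}, already available at this stage, to pass to the representation $\rho_1^{\univ}$ and then run a standard idempotent-lifting argument for the prime-to-$p$ group $H$. Your route is shorter once Proposition~\ref{trace_rep} is in hand and makes the statement transparent on the representation side; the paper's route stays within the pseudocharacter world and is more self-contained (though it too appeals to the proof of Proposition~\ref{trace_rep} for the $\OO$-flatness of $R^{\ps}$). One small point worth making explicit in your write-up: the orthogonal idempotents $\rho_1^{\univ}(e_{\chi_1})$ and $\rho_1^{\univ}(e_{\chi_2})$ sum to the identity because their sum is an idempotent in $M_2(R_1)$ whose reduction modulo the maximal ideal is~$1$, and an idempotent congruent to~$1$ in a matrix ring over a complete local ring equals~$1$.
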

  \begin{proof} Let $D^{\ps}_H$ be the deformation functor parameterizing $2$-dimensional determinants  $(d, t): H\rightarrow A$ lifting 
   $(\chi_1+\chi_2, \chi_1\chi_2)$. We claim that the corresponding deformation ring $R^{\ps}_H$ is equal to $\OO$, and $([\chi_1] + [\chi_2], [\chi_1\chi_2])$
   is the universal $2$-dimensional determinant. The claim implies the assertion of the lemma, as it follows from the proof of Proposition \ref{trace_rep}
   that $R^{\ps}$ is a flat $\OO$-algebra. To prove the claim it is enough to verify that $D^{\ps}_H(k[\varepsilon])=D^{\ps}_H(k)$, since 
   $([\chi_1] + [\chi_2], [\chi_1\chi_2])$ gives a characteristic zero point. If $(t, d)\in D^{\ps}_H(k[\varepsilon])$ let $\CH(k[\varepsilon])$ 
   be the corresponding Cayley--Hamilton algebra. We have already shown in the first part of the proof of Lemma \ref{compute} that 
   $\CH(k[\varepsilon])/\varepsilon \CH(k[\varepsilon])$ is a $2$-dimensional $k$-vector space with basis $\{e_{\chi_1}, e_{\chi_2}\}$.
   Nakayama's lemma and Lemma \ref{free} implies that $\CH(k[\varepsilon])$ is a free $k[\varepsilon]$-module with basis $\{e_{\chi_1}, e_{\chi_2}\}$.
   Since $H$ acts on $e_{\chi_1}$ by the character $\chi_1$ and on $e_{\chi_2}$ by the character $\chi_2$ we deduce that $\CH(k[\varepsilon])\cong \CH(k)\otimes_k k[\varepsilon]$ as $k[\varepsilon][H]$-modules. This implies that 
   $(t,d)=(\chi_1+\chi_2, \chi_1 \chi_2)$.
      \end{proof}

\begin{prop}\label{reducibility}
There is an ideal $\rr$ of $R^{\ps}$ uniquely determined by the following universal property: an ideal $J$ of $R^{\ps}$ contains $\rr$ if and only if 
$t^{\univ} \pmod {J} = \psi_1+ \psi_2$,  $d^{\univ} \pmod{J}= \psi_1\psi_2$, where $\psi_1, \psi_2: \gal \rightarrow R^{\ps}/J$, are deformations of $\chi_1$ and $\chi_2$, respectively, to $R^{\ps}/J$.  The ring $R^{\ps}/\rr$ is reduced and $\OO$-torsion free. The ideal $\rr$ is principal, generated by a regular element.
\end{prop}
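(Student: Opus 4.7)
The plan is to realize $\rr$ as a principal ideal coming from a product in the universal Cayley--Hamilton algebra $\CH(R^{\ps})$, verify the universal property using the idempotent decomposition developed in Lemmas~\ref{compute}--\ref{free}, and then derive the ring-theoretic conclusions from Proposition~\ref{trace_rep}.

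First I lift $\Phi_{12}, \Phi_{21}$ from Lemma~\ref{k_alg} to elements $\tilde{\Phi}_{12} \in e_{\chi_1} \CH(R^{\ps}) e_{\chi_2}$ and $\tilde{\Phi}_{21} \in e_{\chi_2} \CH(R^{\ps}) e_{\chi_1}$, which are cyclic generators of these off-diagonal blocks by topological Nakayama combined with Lemma~\ref{k_alg}. Lemma~\ref{free} then produces unique $c, c' \in R^{\ps}$ with $\tilde{\Phi}_{12} \tilde{\Phi}_{21} = c e_{\chi_1}$ and $\tilde{\Phi}_{21} \tilde{\Phi}_{12} = c' e_{\chi_2}$, and trace cyclicity (extending property~(ii) to $\CH$) forces $c = c'$; since any other choice of lifts rescales $c$ by a unit, the ideal $\rr := (c)$ is well defined. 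An auxiliary identity I will also need, obtained from Lemmas~\ref{compute} and~\ref{free} via Nakayama applied to $A[H] \to \CH(A)$, is $e_{\chi_1} + e_{\chi_2} = 1$ in $\CH(R^{\ps})$.

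Next I verify the universal property in both directions. For the forward direction, a reducibility $(t^{\univ}, d^{\univ}) \bmod J = (\psi_1 + \psi_2, \psi_1 \psi_2)$ is witnessed by the diagonal representation $\psi_1 \oplus \psi_2\colon \gal \to \GL_2(R^{\ps}/J)$, inducing an algebra map $\CH(R^{\ps}/J) \to M_2(R^{\ps}/J)$. Lemma~\ref{teich} combined with character orthogonality sends $e_{\chi_i} \mapsto \mathrm{diag}(\delta_{i1}, \delta_{i2})$, so $\tilde{\Phi}_{12} = e_{\chi_1} \tilde{\Phi}_{12} e_{\chi_2}$ must map to a matrix that is simultaneously strictly upper-triangular and (the representation being diagonal) diagonal, hence to $0$; then $c \cdot \mathrm{diag}(1, 0) = 0$ gives $c \in J$. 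For the converse, assuming $c \in J$ I define $\psi_i(g)$ via $e_{\chi_i} \bar g e_{\chi_i} = \psi_i(g) e_{\chi_i}$ and check, using $1 = e_{\chi_1} + e_{\chi_2}$ and the vanishing products $\tilde{\Phi}_{12} \tilde{\Phi}_{21} = 0 = \tilde{\Phi}_{21} \tilde{\Phi}_{12}$ in $\CH(R^{\ps}/J)$, that each $\psi_i$ is a continuous group homomorphism and that $t^{\univ} \bmod J = \psi_1 + \psi_2$. The most delicate part, and the main obstacle I foresee, is establishing $d^{\univ} \bmod J = \psi_1 \psi_2$: I expect to expand $(\bar g - \psi_1(g))(\bar g - \psi_2(g))$ using the four-block decomposition of $\bar g$, show that every term in the expansion vanishes via the off-diagonal relations, and then compare with the Cayley--Hamilton identity $\bar g^2 - t^{\univ}(g) \bar g + d^{\univ}(g) = 0$ to pin down $d^{\univ}$.

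For the ring-theoretic conclusions, applying the universal property at $J = \rr$ produces characters $\psi_i^{\rr}$ over $R^{\ps}/\rr$, which by the universal property of the deformation rings $R_{\chi_1}, R_{\chi_2}$ (both formally smooth of dimension $3$ over $\OO$) give a map $g\colon R_{\chi_1} \wtimes_\OO R_{\chi_2} \to R^{\ps}/\rr$. Conversely, the reducible pair $(\psi_1^{\univ} + \psi_2^{\univ}, \psi_1^{\univ} \psi_2^{\univ})$ over $R_{\chi_1} \wtimes_\OO R_{\chi_2}$ gives $f\colon R^{\ps}/\rr \to R_{\chi_1} \wtimes_\OO R_{\chi_2}$, and the universal property of $R^{\ps}$ forces $g \circ f = \mathrm{id}$; hence $f$ embeds $R^{\ps}/\rr$ as a subring of a reduced $\OO$-flat ring, yielding reducedness and $\OO$-flatness. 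Finally, Proposition~\ref{trace_rep} identifies $R^{\ps} \cong \Lambda[[x_1, \ldots, x_5]]$ as a $6$-dimensional Cohen--Macaulay local ring whose minimal primes (its only associated primes, by Cohen--Macaulayness) cut out components of dimension $6$, whereas $R_{\chi_1} \wtimes_\OO R_{\chi_2}$ has dimension $5$; so $c$ cannot lie in any minimal prime of $R^{\ps}$, and is therefore a non-zero-divisor.
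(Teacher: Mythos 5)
Your argument is essentially correct but takes a genuinely different route from the paper's. The paper defines $\rr$ outright as the kernel of the surjection $R^{\ps}\twoheadrightarrow R_{\chi_1\chi_2}\wtimes_{\OO}R_{\chi_1}$ coming from the injection of functors $(d,\psi_1)\mapsto(d\psi_1^{-1}+\psi_1,d)$ (surjectivity is Chenevier's, injectivity is orthogonality of characters); the universal property and the reducedness/$\OO$-torsion-freeness of $R^{\ps}/\rr$ are then immediate from the explicit target, and principality together with regularity are extracted by comparing the presentations $\Lambda[[x_1,\dots,x_5]]$ and $\Lambda[[x_1,z_1,x_2,y_2,z_2]]/((1+y_2)^2-1)$. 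You instead exhibit a generator $c$ up front inside $\CH(R^{\ps})$ — in effect re-deriving the relations \eqref{relation_2} of Proposition \ref{relation}, without circularity, since only Lemmas \ref{compute}--\ref{free} and Proposition \ref{trace_rep} are needed for that — and verify the universal property by direct computation with the idempotents. The step you flag as delicate is in fact painless: inserting $1=e_{\chi_1}+e_{\chi_2}$ between the two factors of $\bar g^2$ and projecting to the $e_{\chi_1}$-block gives $e_{\chi_1}\bar g^2e_{\chi_1}=\psi_1(g)^2e_{\chi_1}+(\text{multiple of }c)\,e_{\chi_1}$, and comparing with $e_{\chi_1}(t^{\univ}(g)\bar g-d^{\univ}(g))e_{\chi_1}$ via the freeness of $e_{\chi_1}\CH(R^{\ps})e_{\chi_1}$ (Lemma \ref{free}) yields $d^{\univ}(g)\equiv\psi_1(g)\psi_2(g)\pmod J$ once $c\in J$. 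Your route buys an explicit generator of $\rr$ already at this stage; the paper's buys the stronger identification $R^{\ps}/\rr\cong R_{\chi_1\chi_2}\wtimes_{\OO}R_{\chi_1}$, of which your retraction $g\circ f=\mathrm{id}$ recovers only one half.

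Two points need repair. First, for $p=2$ the rings $R_{\chi_i}$ are \emph{not} formally smooth over $\OO$: one has $R_{\chi}\cong\OO[[x,y,z]]/((1+y)^2-1)\cong\Lambda[[x,z]]$, formally smooth only over $\Lambda$. This does not break your argument — all you actually use is that $R_{\chi_1}\wtimes_{\OO}R_{\chi_2}\cong\OO[\mu\times\mu][[x_1,z_1,x_2,z_2]]$ is reduced, $\OO$-torsion free and of dimension $5$, which is true — but the stated justification fails in the one case the paper is written for. Second, in the final step the injectivity of $f$ alone does not bound $\dim R^{\ps}/(c)$: a local subring of a $5$-dimensional ring need not have dimension $\le 5$. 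You must invoke the surjectivity of $g$ (which you do have from $g\circ f=\mathrm{id}$), so that $R^{\ps}/(c)$ is a \emph{quotient} of $R_{\chi_1}\wtimes_{\OO}R_{\chi_2}$ and hence has dimension $\le 5<6=\dim R^{\ps}/\pp$ for every minimal prime $\pp$; only then does the Cohen--Macaulayness of $R^{\ps}\cong\Lambda[[x_1,\dots,x_5]]$ exclude $c$ from every associated prime and make it regular.
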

\begin{proof} If $p>2$ this is proved in \cite[\S B.2]{cmf}. The proof is essentially the same for $p=2$. If  $\chi: \gal\rightarrow k^{\times}$ is a continuous character then 
its deformation problem $D_{\chi}$ is pro-represented by $R_{\chi}$, which isomorphic to the completed group algebra over $\OO$ of the pro-$2$ completion of
$\mathbb{Q}_2^{\times}$, so that $R_{\chi}\cong \OO[[x,y, z]]/((1+y)^2-1)$. Since $\chi_1\neq \chi_2$, using orthogonality of characters, one shows that for each $A\in \mathfrak A$ the map $D_{\chi_1\chi_2}(A)\times D_{\chi}(A) \rightarrow D^{\ps}(A)$, $(d, \psi_1)\mapsto (d\psi_1^{-1}+\psi_1, d)$ is injective. We thus obtain a surjection 
of $\Lambda$-algebras $R^{\ps}\twoheadrightarrow R_{\chi_1\chi_2}\wtimes_{\OO} R_{\chi_1}$. The ideal $\rr$ is precisely the kernel of this map.
Since the rings are isomorphic to $\Lambda[[x_1, \ldots x_5]]$ 
and $\Lambda[[x_1, z_1, x_2, y_2, z_2]]/((1+y_2)^2-1)$, respectively,  this allows us to conclude.
\end{proof}
We will refer to $\rr$ as the reducibility ideal, and to $V(\rr)$ as the reducibility locus.

  Let $(t^{\univ}, d^{\univ})\in D^{\ps}(R^{\ps})$ be the universal object. Let $J$ be the closed two-sided ideal in $R^{\ps}[[\gal]]$ generated by all the elements
of the form $g^2-t^{\univ}(g)g + d^{\univ}(g)$, for all $g\in \gal$, and let 
$$\CH(R^{\ps}):= R^{\ps}[[\gal]]/J.$$

\begin{prop}\label{identify} The isomorphisms $t_1: R^{\ps}\cong R_1$, $t_2: R^{\ps}\cong R_2$ induce isomorphisms of left $R^{\ps}[[\gal]]$-modules:
$$\CH(R^{\ps})e_{\chi_2}\cong \rho^{\univ}_1, \quad \CH(R^{\ps})e_{\chi_1}\cong \rho^{\univ}_2, \quad \CH(R^{\ps})\cong \rho_1^{\univ}\oplus \rho_2^{\univ}.$$
\end{prop}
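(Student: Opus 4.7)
My plan is to construct the desired $\CH(R^{\ps})$-linear maps by choosing appropriate $H$-eigenvectors in $\rho_i^{\univ}$, and then upgrade each to an isomorphism by a Nakayama argument after reducing modulo $\mathfrak m_{R^{\ps}}$. The starting point is Proposition \ref{trace_rep}: under the isomorphism $t_i \colon R^{\ps} \cong R_i$, the pair $(t^{\univ}, d^{\univ})$ coincides with $(\tr \rho_i^{\univ}, \det \rho_i^{\univ})$. Since $\rho_i^{\univ}$ is realised on a free $R^{\ps}$-module of rank $2$, the ordinary $2\times 2$ Cayley--Hamilton identity becomes $g^2 - t^{\univ}(g) g + d^{\univ}(g) = 0$ for every $g \in \gal$. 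This is the defining relation of $\CH(R^{\ps})$, so the $R^{\ps}[[\gal]]$-action on each $\rho_i^{\univ}$ factors canonically through $\CH(R^{\ps})$, making $\rho_i^{\univ}$ a left $\CH(R^{\ps})$-module.

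Because $|H|$ is prime to $p$, the $H$-action on the free module $\rho_i^{\univ}$ decomposes into $\chi_1$- and $\chi_2$-isotypic pieces, each free of rank $1$ over $R^{\ps}$ (lifting the analogous decomposition of $\rho_i$). Pick a generator $v_1$ of the $\chi_2$-eigenspace of $\rho_1^{\univ}$ and a generator $v_2$ of the $\chi_1$-eigenspace of $\rho_2^{\univ}$, so that $e_{\chi_2} v_1 = v_1$ and $e_{\chi_1} v_2 = v_2$. Define
\[
\phi_1 \colon \CH(R^{\ps}) e_{\chi_2} \to \rho_1^{\univ}, \ x \mapsto x v_1, \qquad \phi_2 \colon \CH(R^{\ps}) e_{\chi_1} \to \rho_2^{\univ}, \ x \mapsto x v_2.
\]
Since $\rho_1$ has cosocle $\chi_2$ and $\rho_2$ has cosocle $\chi_1$, the reductions $\bar v_i$ generate $\rho_i$ as $\gal$-modules, so by Nakayama $v_i$ generates $\rho_i^{\univ}$ as an $R^{\ps}[[\gal]]$-module and each $\phi_i$ is surjective. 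Moreover, $\phi_i$ reduces modulo $\mathfrak m_{R^{\ps}}$ to the isomorphism $\CH(k) e_{\chi_{3-i}} \cong \rho_i$ of Lemma \ref{compute}.

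Injectivity is the main point. Since $\rho_i^{\univ}$ is free, hence projective, over $R^{\ps}$, the continuous surjection $\phi_i$ of pseudo-compact $R^{\ps}$-modules admits a continuous $R^{\ps}$-linear splitting, giving a topological decomposition $\CH(R^{\ps}) e_{\chi_{3-i}} \cong \rho_i^{\univ} \oplus K_i$ with $K_i$ pseudo-compact. Reducing modulo $\mathfrak m_{R^{\ps}}$ yields $\CH(k) e_{\chi_{3-i}} \cong \rho_i \oplus K_i/\mathfrak m K_i$ as $k$-vector spaces, and since $\dim_k \CH(k) e_{\chi_{3-i}} = 2 = \dim_k \rho_i$, we must have $K_i = \mathfrak m K_i$. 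Topological Nakayama now forces $K_i = 0$, so $\phi_i$ is an isomorphism. This step is the heart of the proof; everything else is bookkeeping.

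For the third isomorphism it suffices to show $e_{\chi_1} + e_{\chi_2} = 1$ in $\CH(R^{\ps})$. Since $|H|$ is invertible in $R^{\ps}$ we have $1 = \sum_\psi e_\psi$ in $R^{\ps}[H]$, so the claim reduces to showing that $e_\psi$ maps to zero in $\CH(R^{\ps})$ for every character $\psi$ of $H$ other than $\chi_1, \chi_2$. By Lemma \ref{teich} the Cayley--Hamilton relation at $h \in H$ is $(h - [\chi_1](h))(h - [\chi_2](h)) = 0$ in $\CH(R^{\ps})$, and applied to $e_\psi$ this yields $([\psi](h) - [\chi_1](h))([\psi](h) - [\chi_2](h)) e_\psi = 0$. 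Since $H$ cannot be the union of the two proper subgroups on which $\psi$ agrees with $\chi_1$ or $\chi_2$, there is an $h$ making both factors units in $\OO$, whence $e_\psi = 0$. Hence $\CH(R^{\ps}) = \CH(R^{\ps}) e_{\chi_1} \oplus \CH(R^{\ps}) e_{\chi_2} \cong \rho_2^{\univ} \oplus \rho_1^{\univ}$, completing the proof.
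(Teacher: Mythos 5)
Your proof is correct, and it follows essentially the same strategy as the paper's: construct the $\CH(R^{\ps})$-linear map, observe (via Lemma~\ref{compute}) that it is an isomorphism modulo $\mm_{R^{\ps}}$, and then use the combination of $R^{\ps}$-freeness of $\rho_i^{\univ}$ and topological Nakayama to conclude. The paper phrases the argument for the single map $\CH(R^{\ps})\to \rho_1^{\univ}\oplus\rho_2^{\univ}$ coming from cyclicity of $\rho_1\oplus\rho_2$ and then says the other cases follow by the same argument, whereas you reverse the order, treating $\CH(R^{\ps})e_{\chi_2}\to\rho_1^{\univ}$ and $\CH(R^{\ps})e_{\chi_1}\to\rho_2^{\univ}$ first and then deducing the direct-sum statement from $e_{\chi_1}+e_{\chi_2}=1$ in $\CH(R^{\ps})$. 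That last identity is a genuinely extra point you needed to supply (the paper never states it explicitly here), and your argument for it --- use Lemma~\ref{teich} to get $(h-[\chi_1](h))(h-[\chi_2](h))=0$ on $\CH(R^{\ps})$, then kill $e_\psi$ for $\psi\ne\chi_1,\chi_2$ because $H$ cannot be a union of two proper subgroups --- is correct and is in fact the same elementary group-theory trick used in the proof of Lemma~\ref{compute}. Your explicit construction of the maps $\phi_i$ via $H$-eigenvectors $v_i$ (rather than the paper's appeal to cyclicity) is a clean alternative, and the splitting-plus-Nakayama step for injectivity matches the paper's kernel argument exactly. Two small points to be explicit about when you write this up: justify that $\CH(R^{\ps})\otimes_{R^{\ps}}k\cong\CH(k)$ (it holds because the Cayley--Hamilton ideal is generated by the elements $g^2-t^{\univ}(g)g+d^{\univ}(g)$ and these specialise to the generators of the ideal over $k$), and note that the $H$-isotypic pieces of $\rho_i^{\univ}$ are free of rank one because they are finitely generated projective over the local ring $R^{\ps}$ and one-dimensional modulo $\mm_{R^{\ps}}$.
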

\begin{proof} Since $\rho_1\oplus \rho_2$ is a cyclic $R^{\ps}[[\gal]]$-module, there is a map of $R^{\ps}[[\gal]]$-modules 
$\phi: R^{\ps}[[\gal]]\rightarrow \rho_1^{\univ}\oplus \rho_2^{\univ}$, such that after composing with  the reduction modulo 
the maximal ideal of $R^{\ps}$, we obtain a surjection $R^{\ps}[[\gal]]\twoheadrightarrow \rho_1\oplus \rho_2$. Let $C$ and 
$K$ be the cokernel and the kernel of $\phi$, respectively. Lemma \ref{compute} implies that $k\wtimes \phi$ is an isomorphism 
between $k\wtimes_{R^{\ps}} \CH(R^{\ps})$ and $k\wtimes_{R^{\ps}} (\rho_1^{\univ}\oplus \rho_2^{\univ})$. This implies 
that $k\wtimes_{R^{\ps}} C=0$ and Nakayma's lemma for pseudo-compact $R^{\ps}$-modules implies that $C=0$. Thus $\phi$ is surjective. 
Since $\rho_1^{\univ}\oplus \rho_2^{\univ}$ is a free $R^{\ps}$-module of rank $4$, we deduce that $k\wtimes_{R^{\ps}} K=0$, and so 
$K=0$. Hence, $\phi$ is an isomorphism. The same argument proves the other assertions.
\end{proof}

\begin{cor}\label{endo_R_gal} There is a natural isomorphism: 
$$\End_{R^{\ps}[[\gal]]}(\rho_1^{\univ}\oplus \rho_2^{\univ})\cong \CH(R^{\ps})^{\op}.$$
\end{cor}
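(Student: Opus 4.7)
The plan is to deduce this corollary directly from Proposition \ref{identify} together with the universal fact that a ring acting on itself by left multiplication has endomorphism ring equal to its opposite (acting by right multiplication).

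First, I would invoke Proposition \ref{identify} to obtain an isomorphism of left $R^{\ps}[[\gal]]$-modules
\[
\rho_1^{\univ}\oplus \rho_2^{\univ}\iso \CH(R^{\ps}),
\]
which immediately gives
\[
\End_{R^{\ps}[[\gal]]}(\rho_1^{\univ}\oplus \rho_2^{\univ})\cong \End_{R^{\ps}[[\gal]]}(\CH(R^{\ps})).
\]
Next I would observe that the left $R^{\ps}[[\gal]]$-module structure on $\CH(R^{\ps})$ factors through the quotient map $R^{\ps}[[\gal]]\twoheadrightarrow \CH(R^{\ps})$, and hence an $R^{\ps}[[\gal]]$-linear endomorphism of $\CH(R^{\ps})$ is the same datum as a $\CH(R^{\ps})$-linear endomorphism. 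Consequently
\[
\End_{R^{\ps}[[\gal]]}(\CH(R^{\ps}))=\End_{\CH(R^{\ps})}(\CH(R^{\ps})).
\]

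The final step is the standard identification: right multiplication $r\mapsto (x\mapsto xr)$ defines a ring isomorphism $\CH(R^{\ps})^{\op}\iso \End_{\CH(R^{\ps})}(\CH(R^{\ps}))$, with inverse given by evaluating an endomorphism at the unit element. Composing the three isomorphisms yields the asserted natural isomorphism $\End_{R^{\ps}[[\gal]]}(\rho_1^{\univ}\oplus \rho_2^{\univ})\cong \CH(R^{\ps})^{\op}$.

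There is no serious obstacle here; the content is entirely contained in Proposition \ref{identify}. The only mild subtlety worth mentioning is that all modules in sight are pseudo-compact, but this is compatible with the argument, since the right-multiplication map is automatically continuous and the identification of $R^{\ps}[[\gal]]$-linear with $\CH(R^{\ps})$-linear maps is purely algebraic (it only uses that $R^{\ps}[[\gal]]\twoheadrightarrow \CH(R^{\ps})$ is surjective).
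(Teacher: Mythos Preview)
Your proof is correct and is essentially the same as the paper's: the paper invokes Proposition \ref{identify} and the general fact that for a two-sided ideal $J$ of a ring $A$ right multiplication gives $\End_A(A/J)\cong (A/J)^{\op}$, which is exactly your argument with the intermediate step $\End_{R^{\ps}[[\gal]]}(\CH(R^{\ps}))=\End_{\CH(R^{\ps})}(\CH(R^{\ps}))$ made explicit.
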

\begin{proof} If $J$ is a two-sided ideal in a ring $A$ then multiplication on the right induces an isomorphism between $\End_A(A/J)$ and 
the algebra opposite to $A/J$. The assertion follows from the last isomorphism in Proposition \ref{identify}.
\end{proof}

\begin{remar} One may check that $g\mapsto d^{\univ}(g) g^{-1}$ induces an involution on $\CH(R^{\ps})$ and hence an 
isomorphism of $R^{\ps}$-algebras between $\CH(R^{\ps})$ and $\CH(R^{\ps})^{\op}$.
\end{remar}

\begin{prop}\label{relation} The algebra $\CH(R^{\ps})$ is a free $R^{\ps}$-module of rank $4$: 
$$ \CH(R^{\ps})\cong \begin{pmatrix} R^{\ps} e_{\chi_1} & R^{\ps} \tilde{\Phi}_{12} \\ R^{\ps} \tilde{\Phi}_{21} & R^{\ps} e_{\chi_2}\end{pmatrix}.$$
The generators satisfy the following relations 
\begin{equation}\label{relation_0}
 e_{\chi_1}^2=e_{\chi_1}, \quad e_{\chi_2}^2=e_{\chi_2}, \quad e_{\chi_1}e_{\chi_2}=e_{\chi_2}e_{\chi_1}=0,
\end{equation}
\begin{equation}\label{relation_1}
 e_{\chi_1} \tilde{\Phi}_{12}=  \tilde{\Phi}_{12}e_{\chi_2}= \tilde{\Phi}_{12}, \quad 
e_{\chi_2} \tilde{\Phi}_{21}=  \tilde{\Phi}_{21}e_{\chi_1}= \tilde{\Phi}_{21},
\end{equation}
\begin{equation}\label{relation_1_bis}
 e_{\chi_2} \tilde{\Phi}_{12}=  \tilde{\Phi}_{12}e_{\chi_1}= 
e_{\chi_1} \tilde{\Phi}_{21}=  \tilde{\Phi}_{21}e_{\chi_2}=\tilde{\Phi}_{12}^2=\tilde{\Phi}_{21}^2=0,
\end{equation}
\begin{equation}\label{relation_2}
 \tilde{\Phi}_{12} \tilde{\Phi}_{21}= c e_{\chi_1}, \quad  \tilde{\Phi}_{21} \tilde{\Phi}_{12}= c e_{\chi_2}.
 \end{equation}
The element $c$ generates the reducibility ideal in $R^{\ps}$. In particular, $c$ is $R^{\ps}$-regular.
\end{prop}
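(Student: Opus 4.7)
The plan is to bootstrap from the residue-field structure in Lemma \ref{k_alg} and the identification $\CH(R^{\ps}) \cong \rho_1^{\univ}\oplus\rho_2^{\univ}$ of Proposition \ref{identify}, and then to pin down $c$ by matching the matrix form of $\rho_1^{\univ}$ against the universal property of $\rr$.

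First I would establish the Peirce decomposition $\CH(R^{\ps}) = \bigoplus_{i,j} e_{\chi_i}\CH(R^{\ps})e_{\chi_j}$ with each of the four summands free of rank $1$ over $R^{\ps}$. The idempotents $e_{\chi_1}, e_{\chi_2}$ are orthogonal, and their sum is $1$ in $\CH(R^{\ps})$ because the Cayley--Hamilton relation kills every other $H$-idempotent (as in the proof of Lemma \ref{compute}). The diagonal corners are free of rank $1$ by Lemma \ref{free}. For the off-diagonal corners, $\CH(R^{\ps})e_{\chi_2}\cong\rho_1^{\univ}$ is free of rank $2$ over $R^{\ps}$ (Proposition \ref{identify} combined with Proposition \ref{trace_rep}); writing this as $e_{\chi_1}\CH(R^{\ps})e_{\chi_2}\oplus e_{\chi_2}\CH(R^{\ps})e_{\chi_2}$, the first summand is a direct summand of a free module, hence projective of rank $1$, hence free over the local ring $R^{\ps}$. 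Choose generators $\tilde{\Phi}_{12}, \tilde{\Phi}_{21}$ lifting the elements of the same name from Lemma \ref{k_alg}. The relations \eqref{relation_0}, \eqref{relation_1}, \eqref{relation_1_bis} then follow from the Peirce block structure (for instance $\tilde{\Phi}_{12}^2 = 0$ since $e_{\chi_2}e_{\chi_1} = 0$). For \eqref{relation_2}, both $\tilde{\Phi}_{12}\tilde{\Phi}_{21}$ and $\tilde{\Phi}_{21}\tilde{\Phi}_{12}$ lie in rank-$1$ free modules, so they equal $c\,e_{\chi_1}$ and $c'\,e_{\chi_2}$ respectively; associativity applied to $\tilde{\Phi}_{12}\tilde{\Phi}_{21}\tilde{\Phi}_{12}$ and freeness of $e_{\chi_1}\CH(R^{\ps})e_{\chi_2}$ force $c = c'$.

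The core step is to identify $(c) = \rr$. Writing $\bar g = c_{11}(g)e_{\chi_1} + c_{12}(g)\tilde{\Phi}_{12} + c_{21}(g)\tilde{\Phi}_{21} + c_{22}(g)e_{\chi_2}$ for $g \in \gal$, a direct computation with the relations above gives the matrix of $\rho_1^{\univ}(g)$ on $\CH(R^{\ps})e_{\chi_2}$ in the basis $\{\tilde{\Phi}_{12}, e_{\chi_2}\}$ as
\[
\rho_1^{\univ}(g) = \begin{pmatrix} c_{11}(g) & c_{12}(g) \\ c\,c_{21}(g) & c_{22}(g) \end{pmatrix}.
\]
The $(1,1)$-entry of $\rho_1^{\univ}(gh) = \rho_1^{\univ}(g)\rho_1^{\univ}(h)$ yields $c_{11}(gh) = c_{11}(g)c_{11}(h) + c\,c_{12}(g)c_{21}(h)$, and similarly for $c_{22}$. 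Modulo $(c)$, both $c_{11}$ and $c_{22}$ become group homomorphisms lifting $\chi_1, \chi_2$, with $t \equiv c_{11} + c_{22}$ and $d \equiv c_{11}c_{22}$, so Proposition \ref{reducibility} yields $\rr \subseteq (c)$. Conversely, since $t = \tr\rho_1^{\univ}$ extends $R^{\ps}$-linearly to $\CH(R^{\ps})$, one has $c_{ii}(g) = t(e_{\chi_i}g)$; modulo $\rr$, where $t \equiv \psi_1 + \psi_2$, orthogonality of characters together with $\psi_i|_H = [\chi_i]$ (Lemma \ref{teich}) forces $c_{ii} \equiv \psi_i \pmod{\rr}$. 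The $(1,1)$-identity then collapses to $c\,c_{12}(g)c_{21}(h) \equiv 0 \pmod{\rr}$ for all $g, h \in \gal$. Since $c_{12}, c_{21}$ reduce modulo $\mm$ to the non-zero cocycles of Lemma \ref{k_alg}, they take unit values at some $g_0, h_0$, whence $c \in \rr$. Regularity of $c$ is immediate: by Proposition \ref{reducibility} the ideal $\rr$ is principal with a regular generator, and the equality $(c) = \rr$ forces $c$ to differ from that generator by a unit.

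The main obstacle is the ``$c \in \rr$'' direction, which hinges on identifying the abstractly defined coordinate functions $c_{ii}$ with pullbacks of $\psi_i$ along $t$; this rests on Lemma \ref{teich} combined with orthogonality of characters of $H$.
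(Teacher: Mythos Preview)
Your argument is correct, and the overall architecture matches the paper's: use Proposition~\ref{identify} for freeness of rank~$4$, read the relations off the Peirce decomposition, and then identify $(c)$ with the reducibility ideal. Two steps, however, are handled by genuinely different mechanisms.

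For the equality $c=c'$, the paper extends $t^{\univ}$ to $\CH(R^{\ps})$, uses $t^{\univ}(ab)=t^{\univ}(ba)$ and $t^{\univ}(e_{\chi_i})=1$ (Lemma~\ref{teich}) to get $c_1=t^{\univ}(\tilde\Phi_{12}\tilde\Phi_{21})=t^{\univ}(\tilde\Phi_{21}\tilde\Phi_{12})=c_2$. Your associativity trick $(\tilde\Phi_{12}\tilde\Phi_{21})\tilde\Phi_{12}=\tilde\Phi_{12}(\tilde\Phi_{21}\tilde\Phi_{12})$ combined with freeness of $R^{\ps}\tilde\Phi_{12}$ is more elementary and avoids Lemma~\ref{teich} at this stage.

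For the inclusion $c\in\rr$, the paper argues by contradiction: if $c\notin\rr$ then, since $R^{\ps}/\rr$ is reduced and $\OO$-torsion free, one finds a maximal ideal $\nn$ of $(R^{\ps}/\rr)[1/p]$ avoiding $c$, and then the explicit matrices for $e_{\chi_i},\tilde\Phi_{ij}$ acting on $\CH(R^{\ps})e_{\chi_1}\otimes\kappa(\nn)$ generate $M_2(\kappa(\nn))$, forcing the specialization to be irreducible---contradicting reducibility of the trace at $\nn$. Your route is constructive: you show directly that $c_{ii}\equiv\psi_i\pmod{\rr}$ and feed this into the $(1,1)$-entry identity. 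One caveat: the identification $\psi_i|_H=[\chi_i]$ is not literally the content of Lemma~\ref{teich}; rather it follows because $|H|$ is prime to~$p$ and $1+\mm_{R^{\ps}/\rr}$ is pro-$p$, so any lift of $\chi_i$ on $H$ must be the Teichm\"uller lift. Similarly, your claim $e_{\chi_1}+e_{\chi_2}=1$ in $\CH(R^{\ps})$ needs Lemma~\ref{teich} (to know the Cayley--Hamilton relation on $H$ factors as $(h-[\chi_1](h))(h-[\chi_2](h))=0$) together with the group-theoretic argument from Lemma~\ref{compute}; the citation of Lemma~\ref{compute} alone is slightly incomplete. With those two clarifications your proof stands, and your approach to $c\in\rr$ is arguably more transparent than the paper's, while the paper's is shorter once the trace machinery is in place.
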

\begin{proof} The last isomorphism in Proposition \ref{identify} and Proposition \ref{trace_rep} imply that $\CH(R^{\ps})$ is a free $R^{\ps}$-module 
of rank $4$. Nakayama's lemma implies that any four elements of $\CH(R^{\ps})$, which map to a $k$-basis of 
$k\otimes_{R^{\ps}} \CH(R^{\ps})\cong \CH(k)$ is an $R^{\ps}$-basis of $\CH(R^{\ps})$. We lift the $k$-basis of $\CH(k)$, described in Lemma \ref{k_alg}
as follows.  Let $\Psi_1, \Psi_2\in \CH(R^{\ps})$ be any lifts of $\Phi_{12}$ and $\Phi_{21}$, respectively. Let $\tilde{\Phi}_{12}=e_{\chi_1} \Psi_1 e_{\chi_2}$, 
$\tilde{\Phi}_{21}=e_{\chi_2} \Psi_2 e_{\chi_1}$. It follows from relations in Lemma \ref{k_alg} that $\tilde{\Phi}_{12}$ maps to $\Phi_{12}$, $\tilde{\Phi}_{21}$ 
maps to $\Phi_{21}$, hence $\{e_{\chi_1}, \tilde{\Phi}_{12}, \tilde{\Phi}_{21}, e_{\chi_2}\}$ is an $R^{\ps}$-basis of $\CH(R^{\ps})$. The relations 
in \eqref{relation_0}, \eqref{relation_1}, \eqref{relation_1_bis} follow from the fact that $e_{\chi_1}$ and $e_{\chi_2}$ are orthogonal idempotents.
Since $e_{\chi_1}\tilde{\Phi}_{12} \tilde{\Phi}_{21}e_{\chi_1}=\tilde{\Phi}_{12} \tilde{\Phi}_{21}$, and  
$e_{\chi_2}\tilde{\Phi}_{21} \tilde{\Phi}_{12}e_{\chi_2}=\tilde{\Phi}_{21} \tilde{\Phi}_{12}$, we deduce that there are $c_1, c_2\in R^{\ps}$, 
such that 
\begin{equation}\label{relation_3}
 \tilde{\Phi}_{12} \tilde{\Phi}_{21}= c_1 e_{\chi_1}, \quad  \tilde{\Phi}_{21} \tilde{\Phi}_{12}= c_2 e_{\chi_2}.
 \end{equation} 
Now $t^{\univ}: \gal\rightarrow R^{\ps}$ extends to a map of $R^{\ps}$-modules $t^{\univ}: \CH(R^{\ps})\rightarrow R^{\ps}$. 
For each $a, b\in \CH(R^{\ps})$ we have $t^{\univ}(ab)= t^{\univ}(ba)$ since this identity holds for $a, b\in R^{\ps}[\gal]$, 
and the image of $R^{\ps}[\gal]$ in $\CH(R^{\ps})$ is dense, and $t^{\univ}$ is continuous. It follows from Lemma \ref{teich} that  $t^{\univ}(e_{\chi_1})=t^{\univ}(e_{\chi_2})=1$. 
This  together with \eqref{relation_3} implies that $c_1= t^{\univ}( \tilde{\Phi}_{12} \tilde{\Phi}_{21})=t^{\univ}( \tilde{\Phi}_{21} \tilde{\Phi}_{12})= c_2=: c.$

We will show that $c$ generates the reducibility ideal in $R^{\ps}$. It then will follow from the last part of Proposition \ref{reducibility} that $c$ is regular.
Since $R^{\ps}/\rr$ is reduced and $\OO$-torsion free by Proposition \ref{reducibility}, 
if the image of $c$ in $R^{\ps}/\rr$ is non-zero then there is a maximal ideal $\nn$ of $(R^{\ps}/\rr)[1/p]$, which does not contain $c$. Since
$\OO_{\kappa(\nn)}[[\gal]]\cong \OO_{\kappa(\nn)} \wtimes_{R^{\ps}} R^{\ps}[[\gal]]$, 
the images of $\OO_{\kappa(\nn)}[[\gal]]$ and $R^{\ps}[[\gal]]$
in $\End_{\OO_{\kappa(\nn)}}(\CH(R^{\ps})e_{\chi_1}\otimes_{R^{\ps}} \OO_{\kappa(\nn)})$ coincide. In particular it will contain the images of  $\tilde{\Phi}_{12}$, $\tilde{\Phi}_{21}$, 
$e_{\chi_1}$, $e_{\chi_2}$.   The action of these elements on $\CH(R^{\ps})e_{\chi_1}$ with respect to the $R^{\ps}$-basis $e_{\chi_1}$, $\tilde{\Phi}_{21}$ is given by the matrices 
$\bigl(\begin{smallmatrix}  0 & c\\ 0 & 0 \end{smallmatrix} \bigr)$, $\bigl(\begin{smallmatrix}  0 & 0\\ 1 & 0 \end{smallmatrix} \bigr)$, $\bigl(\begin{smallmatrix}  1 & 0\\ 0 & 0 \end{smallmatrix} \bigr)$, 
$\bigl(\begin{smallmatrix}  0 & 0\\ 0 & 1  \end{smallmatrix} \bigr)$, respectively.
Since the image 
of $c$ in $\kappa(\nn)$ is non-zero this implies that the map  $\OO[[\gal]] \otimes_\OO {\kappa(\nn)} \rightarrow \End_{\kappa(\nn)}(\CH(R^{\ps})e_{\chi_1}\otimes_{R^{\ps}} \kappa(\nn))$ is surjective
and hence $\rho_2^{\univ}\otimes_{R^{\ps}} \kappa(\nn)\cong \CH(R^{\ps})e_{\chi_1}\otimes_{R^{\ps}} \kappa(\nn)$  is 
an irreducible representation of $\gal$. Moreover,  its trace is equal to the specialization of $t^{\univ}$ at $\nn$. This leads to a contradiction as $\nn$ contains  the reducibility ideal. 
Hence, $c\in \rr$.  For the other inclusion we observe that \eqref{relation_1}, \eqref{relation_2} imply that 
the $R^{\ps}/c$-submodule   of $\rho_1^{\univ} / (c)$ generated by $\tilde{\Phi}_{21}$ is stable under the action of $\CH(R^{\ps})$. Moreover, it is free 
over $R^{\ps}/c$ of rank $1$. 	It follows from the definition of the reducible locus that the homomorphism $R^{\ps}\twoheadrightarrow R^{\ps}/c$ factors through $R^{\ps}/\rr$.
\end{proof}

\section{The centre} 
In this section we compute the ring $\End^{\cont}_{\OO[[\gal]]}(\rho_1^{\univ}\oplus \rho_2^{\univ})$ and show that its centre is naturally 
isomorphic to $R^{\ps}$. This result is used in \cite{p2} to show that the centre of a certain category of $\GL_2(\mathbb Q_2)$ representations
is naturally isomorphic to a quotient of $R^{\ps}$, corresponding to a fixed determinant,  in the same way as \cite[Prop. B.26]{cmf} is used in 
\cite[Cor.8.11]{cmf}.

\begin{lem}\label{endo_def} Let $F$ be a finite extension of $\Qp$, let $\mathcal G_F$ be its absolute Galois group, and let 
$\rho: \mathcal G_F\rightarrow \GL_n(k)$ be a continuous representation, such that $\End_{\mathcal G_F}(\rho)=k$. 
Let $\rho^{\univ}$ be the universal deformation of $\rho$ and let $R$ be the universal deformation ring. Then for pseudo-compact $R$-modules $\md_1$, $\md_2$, the functor $\md\mapsto \md\wtimes_R \rho^{\univ}$ induces an isomorphism 
$$\Hom^{\cont}_R(\md_1, \md_2)\overset{\cong}{\rightarrow} \Hom^{\cont}_{\OO[[\mathcal G_F]]}(\md_1\wtimes_R \rho^{\univ}, \md_2\wtimes_R \rho^{\univ}).$$
In particular, $\End^{\cont}_{\OO[[\mathcal G_F]]}(\rho^{\univ})\cong \End_{R[[\mathcal G_F]]}(\rho^{\univ})\cong R.$
\end{lem}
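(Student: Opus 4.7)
The plan is to leverage the fact that $\rho^{\univ}$ is a free $R$-module of rank $n$, which holds by Mazur's pro-representability of the deformation functor under the hypothesis $\End_{\mathcal G_F}(\rho)=k$: the universal deformation is realised concretely as a continuous homomorphism $\mathcal G_F\to \GL_n(R)$. Fixing a trivialisation $\rho^{\univ}\cong R^n$, the functor $\md\mapsto \md\wtimes_R\rho^{\univ}$ becomes $\md\mapsto \md^n$ after forgetting the Galois action, and
\[
\Hom^{\cont}_{\OO[[\mathcal G_F]]}(\md_1\wtimes_R\rho^{\univ},\md_2\wtimes_R\rho^{\univ})
\]
is identified with the submodule of $M_n(\Hom^{\cont}_R(\md_1,\md_2))$ consisting of those matrices $\Psi$ commuting with $\rho^{\univ}(g)\in \GL_n(R)$ for every $g\in \mathcal G_F$. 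The claim is that these are precisely the scalar matrices $hI_n$, with $h\in \Hom^{\cont}_R(\md_1,\md_2)$.

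The heart of the argument is the case $\md_1=\md_2=R$, i.e.\ the ``in particular'' clause: the centralizer $C$ of $\rho^{\univ}(\mathcal G_F)$ in $M_n(R)$ equals $R\cdot I_n$. Certainly $R\cdot I_n\subseteq C$. For the reverse inclusion I would pass to the $\mm_R$-adic associated graded of $M_n(R)$, whose $s$-th piece is $(\mm_R^s/\mm_R^{s+1})\otimes_k M_n(k)$. On this piece the conjugation action of $\mathcal G_F$ factors through $\rho$, so its invariants equal $(\mm_R^s/\mm_R^{s+1})\otimes_k \End_{\mathcal G_F}(\rho)=(\mm_R^s/\mm_R^{s+1})\cdot I_n$ by hypothesis. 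Since the $s$-th graded piece of $R\cdot I_n\subseteq C$ already fills this invariant space, the associated gradeds of $C$ and $R\cdot I_n$ coincide, and $\mm_R$-adic completeness of $M_n(R)$ forces $C=R\cdot I_n$.

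The general case for finite length $\md_1,\md_2$ follows by applying the same graded-filtration argument with coefficients in $H:=\Hom^{\cont}_R(\md_1,\md_2)$ (a finite length $R$-module equipped with its $\mm_R$-adic filtration): at each graded piece the residue-level hypothesis $\End_{\mathcal G_F}(\rho)=k$ identifies the centralizer of $\rho^{\univ}(\mathcal G_F)$ in $M_n(\mathrm{gr}(H))$ with the scalar matrices, and completeness then gives the centralizer in $M_n(H)$ to be $H\cdot I_n$. The passage from finite length to arbitrary pseudo-compact $\md_1,\md_2$ is the standard reduction via inverse limits $\md_i=\varprojlim_N \md_i/N$ over open submodules, noting that both Hom-spaces in the statement commute with the cofiltered limit in $\md_2$ and with the dual filtered colimit in $\md_1$. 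The main obstacle is ensuring that the graded-level centralizer computation faithfully captures the integral one; this is made possible by the freeness of $\rho^{\univ}$ over $R$, which guarantees that the $\mm_R$-adic filtrations on $M_n(R)$ and on $M_n(H)$ are well-behaved enough for the residue-level identity $\End_{\mathcal G_F}(\rho)=k$ to propagate to every graded piece.
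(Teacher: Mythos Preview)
Your identification of
\[
\Hom^{\cont}_{\OO[[\mathcal G_F]]}(\md_1\wtimes_R\rho^{\univ},\,\md_2\wtimes_R\rho^{\univ})
\]
with a submodule of $M_n\bigl(\Hom^{\cont}_R(\md_1,\md_2)\bigr)$ is exactly the point at issue, not something you may assume. An $\OO[[\mathcal G_F]]$-linear map $\md_1^n\to\md_2^n$ is a priori only a matrix with entries in $\Hom^{\cont}_{\OO}(\md_1,\md_2)$, and the commutation with $\rho^{\univ}(g)\in\GL_n(R)$ mixes the two $R$-actions (on source and target) rather than forcing each entry to be $R$-linear. Concretely, for $\md_1=\md_2=R$ you are computing the centraliser inside $M_n(R)$, which gives $\End_{R[[\mathcal G_F]]}(\rho^{\univ})=R$; but the actual task is to compute the centraliser inside $M_n\bigl(\End^{\cont}_{\OO}(R)\bigr)$, and $\End^{\cont}_{\OO}(R)$ is enormously larger than $R$. (Indeed the Remark immediately following this Lemma in the paper records precisely this mistake in an earlier reference.) Your $\mm_R$-adic graded argument does not transport to $M_n\bigl(\Hom^{\cont}_{\OO}(\md_1,\md_2)\bigr)$: an $\OO$-linear $\mathcal G_F$-map need not respect the filtration $\mm_R^s\,\md_1^n$, so there is no induced map on graded pieces to which the hypothesis $\End_{\mathcal G_F}(\rho)=k$ can be applied.

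The paper's proof circumvents this by an induction on $\ell(\md_1)+\ell(\md_2)$ that simultaneously controls $\Hom$ and the injectivity of $\Ext^1_R\to\Ext^1_{\OO[[\mathcal G_F]]}$. The base case $\md_1=\md_2=k$ uses the \emph{universal property} of $R$, not merely the freeness of $\rho^{\univ}$: if an extension $0\to k\to\md\to k\to 0$ splits after $\wtimes_R\,\rho^{\univ}$, one extracts a homomorphism $R\to k[\varepsilon]$ and invokes the bijection $\Hom_{\mathfrak A}(R,k[\varepsilon])\cong\Ext^1_{k[\mathcal G_F]}(\rho,\rho)$ to conclude $\md\cong k\oplus k$. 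This deformation-theoretic step is what your argument is missing; without it the passage from $\OO[[\mathcal G_F]]$-linearity to $R$-linearity has no leverage.
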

\begin{proof} We argue  as in \cite[Lem.11.5, Cor.11.6]{cmf} by induction on $\ell(\md_1)+ \ell(\md_2)$ that for finite length modules $\md_1$, $\md_2$ of $R$
the functor $\md\mapsto \md\otimes_R \rho^{\univ}$ induces an
isomorphism $$\Hom_R(\md_1, \md_2)\overset{\cong}{\rightarrow} \Hom_{\OO[[\mathcal G_F]]}(\md_1\otimes_R \rho^{\univ}, \md_2\otimes_R \rho^{\univ})$$
and an injection $$\Ext^1_R(\md_1, \md_2)\hookrightarrow \Ext^1_{\OO[[\mathcal G_F]]}(\md_1\otimes_R \rho^{\univ}, \md_2\otimes_R \rho^{\univ}).$$ 
This last map is well defined in terms of Yoneda extensions, because  $\rho^{\univ}$ is flat over $R$. The induction step follows by looking at long exact sequences, as in the proof of \cite[Lem. A.1]{ext2}.  

To start the induction we need to check that the statement is true for $\md_1=\md_2=k$. The assertion about homomorphisms in this case, comes from
the assumption that $\End_{\mathcal G_F}(\rho)=k$. Consider an extension of $R$-modules, $0\rightarrow k \rightarrow \md\rightarrow k\rightarrow 0$. 
If the corresponding extension $0\rightarrow \rho\rightarrow \md\otimes_R \rho^{\univ}\rightarrow \rho\rightarrow 0$ is split, then $\md$ is killed by $\varpi$. In which case, 
the map $\md\twoheadrightarrow \md/k \hookrightarrow \md$ makes $\md$ into a free rank $1$ module over the dual numbers $k[\varepsilon]$, and hence induces 
a homomorphism $\phi: R\rightarrow k[\varepsilon]$ in $\mathfrak A$. A standard argument in deformation theory shows that 
mapping $\phi$ to the equivalence class of $k[\varepsilon]\otimes_{R, \phi} \rho^{\univ}$ induces a bijection between $\Hom(R, k[\varepsilon])$
and $\Ext^1_{k[\mathcal G_F]}(\rho, \rho)$.  Thus if $\md \otimes_R \rho^{\univ}\cong \rho\oplus \rho$ as an $\OO[\mathcal G_F]$-module then $\md\cong k\oplus k$ and so the map $\Ext^1_R(k, k)\rightarrow \Ext^1_{\OO[\mathcal G_F]}(\rho, \rho)$
is injective.

The general case follows by writing pseudo-compact modules as a projective limit of modules of finite length, see the proof of \cite[Cor.11.6]{cmf}.
The last assertion of the proposition follows by taking $\md_1=\md_2=R$.
\end{proof}

\begin{remar} There is a gap in the proof of \cite[Lem.B.21]{cmf}. The issue is that the ring denoted by $\End_G(\tilde{\rho}_{ij})$ there 
is equal to $\End^{\cont}_{\OO}(R)\times \End^{\cont}_{\OO}(R)$, which is much bigger than $R\times R$. 
\end{remar} 

\begin{prop}\label{ring} $$\End^{\cont}_{\OO[[\gal]]}(\rho_1^{\univ}\oplus\rho_2^{\univ})\cong \End_{R^{\ps}[[\gal]]}(\rho_1^{\univ}\oplus \rho_2^{\univ})\cong \CH(R^{\ps})^{\op}.$$
\end{prop}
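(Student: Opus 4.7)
The second isomorphism is Corollary \ref{endo_R_gal}, so only the first requires work: I need to show every continuous $\OO[[\gal]]$-linear endomorphism of $\rho_1^{\univ}\oplus\rho_2^{\univ}$ is automatically $R^{\ps}$-linear. I would write $\phi=(\phi_{ij})$ as a $2\times 2$ matrix with blocks $\phi_{ij}\in\Hom^{\cont}_{\OO[[\gal]]}(\rho_j^{\univ},\rho_i^{\univ})$ and handle the diagonal and off-diagonal pieces separately.

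For the diagonal ($i=j$), Lemma \ref{endo_def} applies directly: each $\rho_i$ is a non-split extension of distinct characters, so $\End_{\gal}(\rho_i)=k$, and hence $\End^{\cont}_{\OO[[\gal]]}(\rho_i^{\univ})=R_i$. Under the identification $R^{\ps}\cong R_i$ from Proposition \ref{trace_rep}, this is exactly scalar multiplication by $R^{\ps}$ on $\rho_i^{\univ}$, which is $R^{\ps}$-linear by construction.

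For the off-diagonal ($i\neq j$), the natural $R^{\ps}$-linear map $r\mapsto r\tilde{\Phi}_{ij}$ from $R^{\ps}$ into $\Hom^{\cont}_{\OO[[\gal]]}(\rho_j^{\univ},\rho_i^{\univ})$ --- with $\tilde{\Phi}_{12},\tilde{\Phi}_{21}$ the generators from Proposition \ref{relation} --- is injective because $\CH(R^{\ps})$ is $R^{\ps}$-free. To prove surjectivity I would mimic the inductive proof of Lemma \ref{endo_def} in the mixed situation: by induction on $\ell(M_1)+\ell(M_2)$ for finite-length pseudo-compact $R^{\ps}$-modules $M_1,M_2$, show that twisting by $\tilde{\Phi}_{ij}$ induces an isomorphism
\[\Hom^{\cont}_{R^{\ps}}(M_1,M_2)\overset{\cong}{\longrightarrow}\Hom^{\cont}_{\OO[[\gal]]}(M_1\wtimes_{R^{\ps}}\rho_j^{\univ},M_2\wtimes_{R^{\ps}}\rho_i^{\univ}).\]
The base case $M_1=M_2=k$ reduces to $\Hom_{\gal}(\rho_j,\rho_i)=k$, which follows from inspection of socles and cosocles: each $\rho_i$ has socle $\chi_i$ and cosocle $\chi_{3-i}$, so any nonzero $\gal$-map $\rho_j\to\rho_i$ factors as projection onto cosocle followed by inclusion of socle. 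Passing to the inverse limit and then specializing $M_1=M_2=R^{\ps}$ yields $\Hom^{\cont}_{\OO[[\gal]]}(\rho_j^{\univ},\rho_i^{\univ})=R^{\ps}\tilde{\Phi}_{ij}$, as required.

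The hard point will be the accompanying injection $\Ext^1_{R^{\ps}}(k,k)\hookrightarrow\Ext^1_{\gal}(\rho_j,\rho_i)$ needed at the inductive step. I would factor it as
\[\Ext^1_{R^{\ps}}(k,k)\hookrightarrow\Ext^1_{\gal}(\rho_j,\rho_j)\overset{(\Phi_{12})_{\ast}}{\longrightarrow}\Ext^1_{\gal}(\rho_j,\rho_i),\]
where the first arrow is the injection already exploited in the proof of Lemma \ref{endo_def} applied to $\rho_j$ alone, and the second is pushout by the generator of $\Hom_{\gal}(\rho_j,\rho_i)$. Injectivity of the pushout is the substantive remaining step; in the delicate $p=2$ case it should follow from the cup-product calculation of Lemma \ref{tangent_space} together with the explicit description $R^{\ps}\cong\Lambda[[x_1,\ldots,x_5]]$ obtained in the proof of Proposition \ref{trace_rep}.
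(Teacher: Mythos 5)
Your block decomposition, the diagonal case via Lemma \ref{endo_def}, and the base-case computation $\Hom_{\gal}(\rho_j,\rho_i)=k$ all agree with the paper, but the off-diagonal argument has a genuine gap exactly where you flag it: the injectivity of $\Ext^1_{R^{\ps}}(k,k)\rightarrow\Ext^1_{\OO[[\gal]]}(\rho_j,\rho_i)$ is not a loose end that ``should follow'' from Lemma \ref{tangent_space}; it is the entire difficulty of the off-diagonal case in your set-up, and nothing in the paper supplies it. Concretely, the pushforward $\Phi_*\colon\Ext^1_{\OO[[\gal]]}(\rho_j,\rho_j)\rightarrow\Ext^1_{\OO[[\gal]]}(\rho_j,\rho_i)$ factors through $\rho_j\twoheadrightarrow\chi\hookrightarrow\rho_i$ (with $\chi$ the common cosocle of $\rho_j$ and socle of $\rho_i$), and its kernel is the image of $\Ext^1_{\OO[[\gal]]}(\rho_j,\soc\rho_j)$, which is nonzero; you would have to prove that the image of $\Ext^1_{R^{\ps}}(k,k)$ inside $\Ext^1(\rho_j,\rho_j)$ meets this kernel trivially. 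This is very tight: for $p=2$ one has $\dim_k\Ext^1_{R^{\ps}}(k,k)=\dim_k\mm/\mm^2=7$, while the five-term exact sequence for $\OO\rightarrow k$ bounds $\dim_k\Ext^1_{\OO[[\gal]]}(\rho_j,\rho_i)$ by $\dim_kH^1(\gal,\Hom(\rho_j,\rho_i))+1=7$, so your map would have to be an isomorphism (and if the Bockstein $\Hom_{\gal}(\rho_j,\rho_i)\rightarrow H^2(\gal,\Hom(\rho_j,\rho_i))$ is nonzero the target is only $6$-dimensional and injectivity fails outright). Lemma \ref{tangent_space} computes $D^{\ps}(k[\varepsilon])$ and says nothing about $\ker\Phi_*$, so the crucial step remains entirely open.

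The paper's proof is arranged precisely to avoid any such $\Ext^1$ computation. Right multiplication by $\tilde{\Phi}_{12}$ and $\tilde{\Phi}_{21}$ gives maps of $R^{\ps}[[\gal]]$-modules between $\rho_1^{\univ}$ and $\rho_2^{\univ}$ in both directions, each injective because their composite is multiplication by the regular element $c$ on a free $R^{\ps}$-module (relation \eqref{relation_2}). Writing the resulting short exact sequence $0\rightarrow\rho_2^{\univ}\rightarrow\rho_1^{\univ}\rightarrow Q\rightarrow0$ and applying the two left-exact functors $A(-)=\Hom_{R^{\ps}[[\gal]]}(\rho_1^{\univ},-)$ and $B(-)=\Hom^{\cont}_{\OO[[\gal]]}(\rho_1^{\univ},-)$, one obtains a commutative ladder in which the middle vertical arrow is an isomorphism by the diagonal case of Lemma \ref{endo_def} and the outer arrows are injective; a diagram chase then shows $A(\rho_2^{\univ})\rightarrow B(\rho_2^{\univ})$ is surjective, which is exactly the off-diagonal statement. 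If you want to keep your inductive framework, you should replace the unproved $\Ext^1$ input by this short-exact-sequence bootstrap from the diagonal case.
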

\begin{proof} The second isomorphism is given by Corollary \ref{endo_R_gal}. To establish the first isomorphism, it is enough to show that 
the injection 
\begin{equation}\label{is_iso}
\Hom_{R^{\ps}[[\gal]]}(\rho_i^{\univ}, \rho_j^{\univ})\hookrightarrow \Hom_{\OO[[\gal]]}^{\cont}(\rho_i^{\univ}, \rho_j^{\univ}),
\end{equation}
is an isomorphism for all $i, j\in \{1, 2\}$. If $i=j$ then the assertion follows from Lemma \ref{endo_def}.

 We will prove that \eqref{is_iso} is an isomorphism if $i=1$, $j=2$. For a finitely generated $R^{\ps}[[\gal]]$-module $M$ we let 
$$A(M):=\Hom_{R^{\ps}[[\gal]]}(\rho^{\univ}_1, M), \quad B(M):=\Hom_{\OO[[\gal]]}^{\cont}(\rho^{\univ}_1, M).$$
We have a natural inclusion $A(M)\hookrightarrow B(M)$, which is an isomorphism if $M=\rho_1^{\univ}$ by Lemma \ref{endo_def}.
It follows from the description of $\rho_1^{\univ}$ and $\rho_2^{\univ}$ in Proposition \ref{identify} and Proposition \ref{relation} that 
multiplication by $\tilde{\Phi}_{21}$ on the right induces an injection $\rho_2^{\univ}\hookrightarrow \rho_1^{\univ}$. We denote the 
quotient by $Q$. We apply $A$ and $B$ to the exact sequence to get a commutative diagram: 
\begin{displaymath}
\xymatrix@1{ 0 \ar[r]& A (\rho_2^{\univ})
\ar@{^(->}[d]\ar[r] &  A(\rho_1^{\univ}) \ar[r]\ar[d]^{\cong} & A(Q)\ar@{^(->}[d]\\
            0 \ar[r] & B(\rho_2^{\univ}) \ar[r]& B(\rho_1^{\univ})\ar[r]& B(Q).}
\end{displaymath}
The diagram implies that the first vertical arrow is an isomorphism. The same argument with $\rho_1^{\univ}$ instead of $\rho_2^{\univ}$ 
and $\tilde{\Phi}_{12}$ instead of $\tilde{\Phi}_{21}$ shows that \eqref{is_iso} is an isomorphism for $i=2$, $j=1$.
\end{proof}

\begin{cor} The center of $\End^{\cont}_{\OO[[\gal]]}(\rho_1^{\univ}\oplus\rho_2^{\univ})$ is naturally isomorphic to $R^{\ps}$.
\end{cor}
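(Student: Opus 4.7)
By Proposition \ref{ring} the endomorphism ring is identified with $\CH(R^{\ps})^{\op}$, whose centre coincides with the centre of $\CH(R^{\ps})$. The plan is therefore to compute $Z(\CH(R^{\ps}))$ directly, using the presentation of Proposition \ref{relation}. Since $\CH(R^{\ps})$ is an $R^{\ps}$-algebra, the structural map $R^{\ps} \to Z(\CH(R^{\ps}))$, $r \mapsto r \cdot 1$, gives one inclusion for free; the work is to show it is surjective.

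First I would check that $1 = e_{\chi_1} + e_{\chi_2}$ in $\CH(R^{\ps})$. Since $e_{\chi_1}$ and $e_{\chi_2}$ are orthogonal idempotents and by Proposition \ref{relation} the four basis elements split as $\{e_{\chi_1}, \tilde{\Phi}_{21}\} \sqcup \{\tilde{\Phi}_{12}, e_{\chi_2}\}$ spanning $\CH(R^{\ps}) e_{\chi_1}$ and $\CH(R^{\ps}) e_{\chi_2}$ respectively, we get $\CH(R^{\ps}) = \CH(R^{\ps})e_{\chi_1} \oplus \CH(R^{\ps})e_{\chi_2}$. Decomposing $1 = x_1 + x_2$ and multiplying on the right by $e_{\chi_i}$ pins down $x_i = e_{\chi_i}$.

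Next, for a central element $z = a e_{\chi_1} + b \tilde{\Phi}_{12} + c' \tilde{\Phi}_{21} + d e_{\chi_2}$ with $a, b, c', d \in R^{\ps}$, I would compare $e_{\chi_1} z$ with $z e_{\chi_1}$. Using \eqref{relation_0}, \eqref{relation_1}, \eqref{relation_1_bis}, one gets
\[
e_{\chi_1} z = a e_{\chi_1} + b \tilde{\Phi}_{12}, \qquad z e_{\chi_1} = a e_{\chi_1} + c' \tilde{\Phi}_{21},
\]
and since $\{e_{\chi_1}, \tilde{\Phi}_{12}, \tilde{\Phi}_{21}, e_{\chi_2}\}$ is an $R^{\ps}$-basis this forces $b = c' = 0$. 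Then comparing $\tilde{\Phi}_{12} z = d \tilde{\Phi}_{12}$ with $z \tilde{\Phi}_{12} = a \tilde{\Phi}_{12}$ forces $a = d$, giving $z = a (e_{\chi_1} + e_{\chi_2}) = a \cdot 1$.

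Combining the two steps, $Z(\CH(R^{\ps})) = R^{\ps} \cdot 1$, and the structural map $R^{\ps} \to Z(\CH(R^{\ps}))$ is injective because $1$ is part of a free basis (equivalently, because $R^{\ps} \hookrightarrow \CH(R^{\ps})$ is flat). There is no real obstacle here: all the work has been done in Proposition \ref{relation}, and this corollary is a short basis-chasing argument in a $4$-dimensional algebra.
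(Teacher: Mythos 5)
Your proposal is correct and follows essentially the same route as the paper: reduce via Proposition \ref{ring} to computing the centre of $\CH(R^{\ps})$, then chase the $R^{\ps}$-basis $\{e_{\chi_1},\tilde{\Phi}_{12},\tilde{\Phi}_{21},e_{\chi_2}\}$ using the relations of Proposition \ref{relation}. The only (harmless) differences are that you verify $1=e_{\chi_1}+e_{\chi_2}$ explicitly, and your commutator computation with $\tilde{\Phi}_{12}$ yields $a=d$ directly, whereas the paper arrives at $ca_{11}=ca_{22}$ and invokes the regularity of $c$.
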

\begin{proof} Proposition \ref{ring} implies that it is enough to compute the center of $\CH(R^{\ps})^{\op}$. An element 
 $\Upsilon\in \CH(R^{\ps})$ maybe expressed uniquely as $a_{11} e_{\chi_1} + a_{12} \tilde{\Phi}_{12} + a_{21} \tilde{\Phi}_{21} + a_{22} e_{\chi_2}$
with $a_{11}$, $a_{12}$, $a_{21}$, $a_{22}\in R^{\ps}$. If $\Upsilon$ lies in the centre it must commute with $e_{\chi_1}$, $e_{\chi_2}$ and 
$\tilde{\Phi}_{12}$. Using \eqref{relation_1}, \eqref{relation_2} we deduce that $a_{12}=a_{21}=0$ and $ca_{11}= c a_{22}$. It follows from 
\ref{relation} that $c$ is a regular element, thus $a_{11}=a_{22}$, and since $e_{\chi_1}+ e_{\chi_2}$ is the identity on $\CH(R^{\ps})$ we deduce 
that $\Upsilon\in R^{\ps}$. On the other hand $R^{\ps}$ is contained in the center of $\CH(R^{\ps})$ by construction. 
\end{proof}   

\begin{remar} If the determinant is fixed throughout then  one may show that the composition
$\OO[[\gal]]\rightarrow R^{\ps, \psi}[[\gal]]\twoheadrightarrow \CH(R^{\ps, \psi})$ is surjective. This can be used to give another proof 
of the results in this section, in the case when the determinant is fixed.
\end{remar}

\section{Versal deformation ring} \label{versal_ring}
In this section we compute the versal deformation ring of the representation $\rho=\chi_1\oplus \chi_2$. Recall, \cite{mazur}, that 
a lift of $\rho$ to $A\in \mathfrak A$ is a continuous representation $\gal\rightarrow \GL_2(A)$ congruent to 
$\rho$ modulo the maximal ideal of $A$. Two lifts are equivalent if they are conjugate by a matrix lying in the kernel of 
$\GL_2(A)\twoheadrightarrow \GL_2(k)$. Let $D^{\ver}: \mathfrak A \rightarrow Sets$ be the functor which sends $A$ to the set of equivalence
classes of lifts of $\rho$ to $A$. We define 
\begin{equation}\label{versal}
R^{\ver}:=R^{\ps}[[x,y]]/(xy-c),
\end{equation}
where $c\in R^{\ps}$ is defined in \eqref{relation_2}. The matrices $\bigl ( \begin{smallmatrix} 1 & 0 \\ 0 & 0\end{smallmatrix} \bigr )$, $ \bigl ( \begin{smallmatrix} 0& y \\ 0 & 0\end{smallmatrix} \bigr )$, $\bigl ( \begin{smallmatrix} 0& 0 \\ x & 0\end{smallmatrix} \bigr )$, $\bigl ( \begin{smallmatrix} 0 & 0 \\ 0 & 1\end{smallmatrix} \bigr )$ in $\End_{R^{\ver}}(R^{\ver}\oplus R^{\ver})$ satisfy the same relations  as $e_{\chi_1}$, $\tilde{\Phi}_{12}$, $\tilde{\Phi}_{21}$, $e_{\chi_2}$ in $\CH(R^{\ps})$, see \eqref{relation_0}, \eqref{relation_1}, \eqref{relation_1_bis}, \eqref{relation_2}. Thus mapping
$$e_{\chi_1}\mapsto \bigl ( \begin{smallmatrix} 1 & 0 \\ 0 & 0\end{smallmatrix} \bigr ), \quad  \tilde{\Phi}_{12}\mapsto \bigl ( \begin{smallmatrix} 0& y \\ 0 & 0\end{smallmatrix} \bigr ), \quad \tilde{\Phi}_{21}\mapsto \bigl ( \begin{smallmatrix} 0& 0 \\ x & 0\end{smallmatrix} \bigr ), \quad e_{\chi_2}\mapsto \bigl ( \begin{smallmatrix} 0 & 0 \\ 0 & 1\end{smallmatrix} \bigr )$$
induces a homomorphism of $R^{\ps}$-algebras $\CH(R^{\ps})\rightarrow \End_{R^{\ver}}(R^{\ver}\oplus R^{\ver})$. By composing it with a natural 
map $\gal\rightarrow \CH(R^{\ps})$ we obtain a representation
$$\rho^{\ver}: \gal\rightarrow \GL_2(R^{\ver}).$$
It is immediate that $\rho^{\ver}\otimes_{R^{\ver}} k\cong \chi_1\oplus \chi_2$. For $A\in \mathfrak A$ we let 
$$h^{\ver}(A):= \Hom_{\widehat{\mathfrak A}}(R^{\ver}, A):=\varinjlim_n \Hom_{\mathfrak A}(R^{\ver}/ \mm^n, A),$$
where $\mm$ is the maximal ideal of $R^{\ver}$. Mapping $\varphi\in h^{\ver}(A)$ to the equivalence  class of $\rho^{\ver}\otimes_{R^{\ver}, \varphi} A$
induces a natural transformation
\begin{equation}\label{alpha}
\alpha: h^{\ver}\rightarrow D^{\ver}.
\end{equation}
 We define an equivalence relation on $h^{\ver}(A)$, by the rule $\varphi_1\sim \varphi_2$ if $\varphi_1$ and $\varphi_2$ agree on $R^{\ps}$ and 
 there is $\lambda\in 1+\mm_A$, such that $\varphi_1(x)=\lambda \varphi_2(x)$, $\varphi_1(y)=\lambda^{-1} \varphi_2(y)$.
 
 \begin{lem}\label{bijective} For all $A\in \mathfrak A$ the natural transformation $\alpha$ induces a bijection between $h^{\ver}(A)/\sim$ and 
 $D^{\ver}(A)$.
 \end{lem}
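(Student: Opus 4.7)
The plan is to read off the three pieces of data defining $\varphi\colon R^{\ver}\to A$, namely $\varphi|_{R^{\ps}}$, $\varphi(x)$ and $\varphi(y)$, directly from the $\CH(R^{\ps})$-module structure carried by any lift of $\rho$ via its trace and determinant. The ambiguity encoded by $\sim$ should appear as precisely the freedom in choosing a basis making the images of $e_{\chi_1}$ and $e_{\chi_2}$ coincide with the standard diagonal matrix units.

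First I would verify that $\alpha$ descends to $h^{\ver}(A)/\sim$: if $\varphi_1(x)=\lambda\varphi_2(x)$ and $\varphi_1(y)=\lambda^{-1}\varphi_2(y)$ with $\lambda\in 1+\mm_A$ and the two maps agree on $R^{\ps}$, then conjugation by the element $\bigl(\begin{smallmatrix} \lambda & 0 \\ 0 & 1\end{smallmatrix}\bigr)\in 1+\mm_A M_2(A)$ multiplies the upper-right entries of $\rho^{\ver}\otimes_{R^{\ver},\varphi_1} A$ by $\lambda$ and the lower-left entries by $\lambda^{-1}$, which by the explicit formula defining $\rho^{\ver}$ produces $\rho^{\ver}\otimes_{R^{\ver},\varphi_2} A$; the diagonal entries are untouched since they depend only on $R^{\ps}$.

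For surjectivity, let $\rho_A\colon\gal\to\GL_2(A)$ be a lift. Its trace and determinant give an element of $D^{\ps}(A)$ and hence a map $\psi\colon R^{\ps}\to A$; the $2\times 2$ Cayley--Hamilton theorem applied to $\rho_A(g)$ shows that the continuous homomorphism $A[[\gal]]\to M_2(A)$ factors through $\CH(R^{\ps})\otimes_{R^{\ps},\psi} A$. After a first basis change putting $\rho_A\bmod\mm_A$ in diagonal form, the images of $e_{\chi_1},e_{\chi_2}$ are orthogonal idempotents summing to $1$ and reducing mod $\mm_A$ to $\bigl(\begin{smallmatrix} 1 & 0 \\ 0 & 0\end{smallmatrix}\bigr)$ and $\bigl(\begin{smallmatrix} 0 & 0 \\ 0 & 1\end{smallmatrix}\bigr)$; a standard idempotent-lifting conjugation by an element of $1+\mm_A M_2(A)$ turns them into these standard matrix units exactly. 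Relations \eqref{relation_1} and \eqref{relation_1_bis} then force the images of $\tilde{\Phi}_{12},\tilde{\Phi}_{21}$ to have the form $\bigl(\begin{smallmatrix} 0 & y_A \\ 0 & 0\end{smallmatrix}\bigr)$ and $\bigl(\begin{smallmatrix} 0 & 0 \\ x_A & 0\end{smallmatrix}\bigr)$ with $x_A,y_A\in\mm_A$, and \eqref{relation_2} gives $x_A y_A=\psi(c)$. Setting $\varphi(x)=x_A$, $\varphi(y)=y_A$ extends $\psi$ to $\varphi\colon R^{\ver}\to A$; by construction $\rho^{\ver}\otimes_{R^{\ver},\varphi} A$ and $\rho_A$ realise the same $\CH(R^{\ps})\otimes_{R^{\ps},\psi} A$-module structure on $A^2$, hence coincide as lifts of $\rho$.

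For injectivity, suppose $g\in 1+\mm_A M_2(A)$ conjugates $\rho^{\ver}\otimes_{\varphi_1} A$ to $\rho^{\ver}\otimes_{\varphi_2} A$. Taking traces and determinants forces $\varphi_1|_{R^{\ps}}=\varphi_2|_{R^{\ps}}=:\psi$, and then $g$ intertwines two $\CH(R^{\ps})\otimes_{R^{\ps},\psi} A$-actions that agree on $e_{\chi_1}$; so $g$ commutes with $\bigl(\begin{smallmatrix} 1 & 0 \\ 0 & 0\end{smallmatrix}\bigr)$, yielding $g=\bigl(\begin{smallmatrix} \mu_1 & 0 \\ 0 & \mu_2\end{smallmatrix}\bigr)$. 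Comparing the actions of $\tilde{\Phi}_{12}$ and $\tilde{\Phi}_{21}$ then produces $\varphi_1(x)=\mu_1\mu_2^{-1}\varphi_2(x)$ and $\varphi_1(y)=\mu_1^{-1}\mu_2\varphi_2(y)$, so $\lambda:=\mu_1\mu_2^{-1}\in 1+\mm_A$ witnesses $\varphi_1\sim\varphi_2$. The main obstacle is the idempotent-lifting step inside surjectivity; once the $\CH(R^{\ps})$-action has been normalised in the standard basis, the presentation of $\CH(R^{\ps})$ in Proposition~\ref{relation} dictates everything that follows.
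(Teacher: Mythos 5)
Your argument is correct and follows essentially the same route as the paper: read off $\varphi|_{R^{\ps}}$ from the trace and determinant, and $\varphi(x),\varphi(y)$ from the images of $\tilde{\Phi}_{21},\tilde{\Phi}_{12}$ under the $\CH(R^{\ps})$-module structure carried by a lift, after normalising the basis by a conjugation in the kernel of $\GL_2(A)\to\GL_2(k)$. The only cosmetic difference is that the paper performs this normalisation by putting the image of $H$ into Teichm\"uller diagonal form, whereas you lift the idempotents $e_{\chi_1},e_{\chi_2}$ to the standard matrix units directly; since these idempotents are built out of $H$, the two normalisations amount to the same thing.
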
  
 \begin{proof} We first observe that if $\varphi_1\sim \varphi_2$ then the representations $\rho^{\ver}\otimes_{R^{\ver}, \varphi_1} A$, 
 $\rho^{\ver}\otimes_{R^{\ver}, \varphi_2} A$ are conjugate by a matrix of the form 
 $\bigl (\begin{smallmatrix} \lambda & 0 \\ 0 & \lambda^{-1}\end{smallmatrix}\bigr )$, for some $\lambda\in 1+\mm_A$. Hence, $\alpha(\varphi_1)=
 \alpha(\varphi_2)$, and the map is well defined. 
 
 If $\varphi_1, \varphi_2\in h^{\ver}(A)$ are such that $\alpha(\varphi_1)=\alpha(\varphi_2)$, then there is a matrix $M\in \GL_2(A)$, congruent 
 to the identity modulo $\mm_A$, such that 
 $$\rho^{\ver}\otimes_{R^{\ver}, \varphi_1} A= M ( \rho^{\ver}\otimes_{R^{\ver}, \varphi_2} A) M^{-1}.$$
 Hence the representations have the same trace and determinant, which implies that $\varphi_1$ and $\varphi_2$ agree on  $R^{\ps}$. 
 Moreover, since both representations map $h\in H$ to a matrix $\bigl( \begin{smallmatrix} [\chi_1](h) & 0 \\ 0 & [\chi_2](h)\end{smallmatrix}\bigr)$, 
 $M$ has to commute with the image of $H$. This implies that $M$ is a diagonal matrix, and hence $\varphi_1\sim \varphi_2$. Thus 
 the map is injective. 
 
 Let $\rho_A: \gal\rightarrow \GL_2(A)$ be a lift of $\rho$. Since $(\tr \rho_A, \det \rho_A)\in D^{\ps}(A)$, we obtain a map 
 $\varphi: R^{\ps}\rightarrow A$.  
 This allows us to view $\rho_A$ as an $R^{\ps}[[\gal]]$-module, and by Cayley--Hamilton, as an  $\CH(R^{\ps})$-module. In other  words 
 we obtain a homomorphism of $R^{\ps}$-algebras $\rho_A: \CH(R^{\ps})\rightarrow \End_A(A\oplus A)$.
 We may conjugate $\rho_A$ with $M\in \GL_2(A)$, which is congruent to $1$ modulo $\mm_A$, such that every $h\in H$ is mapped to 
 a matrix $\bigl( \begin{smallmatrix} [\chi_1](h) & 0 \\ 0 & [\chi_2](h)\end{smallmatrix}\bigr)$. Thus 
 $\rho_A(e_{\chi_1})=\bigl ( \begin{smallmatrix} 1 & 0 \\ 0 & 0\end{smallmatrix} \bigr )$ and 
 $\rho_A(e_{\chi_2})=\bigl ( \begin{smallmatrix} 0 & 0 \\ 0 & 1\end{smallmatrix} \bigr )$. It follows from \eqref{relation_1} that 
 there are $a_{12}, a_{21}\in \mm_A$, such that $\rho_A(\tilde{\Phi}_{12})= \bigl ( \begin{smallmatrix} 0 & a_{12} \\ 0 & 0\end{smallmatrix} \bigr )$, 
 $\rho_A(\tilde{\Phi}_{21})= \bigl ( \begin{smallmatrix} 0 & 0 \\ a_{21} & 0\end{smallmatrix} \bigr )$. It follows from \eqref{relation_2} that
 $a_{12} a_{21}= \varphi(c)$. Hence, we may extend $\varphi: R^{\ps}\rightarrow A$ to $R^{\ver}$ by mapping $x\mapsto a_{21}$, $y\mapsto a_{12}$.
 It follows by construction that $\alpha(\varphi)$ is the equivalence class of $\rho_A$. Thus the map is surjective.
 \end{proof}

\begin{prop}\label{versal_def} The functor $h^{\ver}$ is a versal hull of $D^{\ver}$, $R^{\ver}$ is a versal deformation ring of $\chi_1\oplus \chi_2$.
\end{prop}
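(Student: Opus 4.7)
The plan is to verify that $\alpha$ exhibits $h^{\ver}$ as a versal hull of $D^{\ver}$, in the sense that (a) $\alpha$ is formally smooth and (b) $\alpha$ induces a bijection on tangent spaces. Both will follow almost directly from Lemma \ref{bijective}, with only one place where the precise shape of the relation $xy - c$ matters.

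For the tangent space condition, I would apply Lemma \ref{bijective} with $A = k[\varepsilon]$ and observe that the equivalence relation $\sim$ collapses to equality in this case: any $\varphi\in h^{\ver}(k[\varepsilon])$ sends $x, y$ into $\varepsilon k$, and for $\lambda = 1 + \varepsilon\mu\in 1+\mm_A$ and any $\beta\in \varepsilon k$ one has $\lambda\beta = \beta = \lambda^{-1}\beta$. So rescaling $\varphi(x)$ and $\varphi(y)$ by $\lambda^{\pm 1}$ does nothing, and Lemma \ref{bijective} upgrades to a genuine bijection $h^{\ver}(k[\varepsilon])\overset{\cong}{\rightarrow}D^{\ver}(k[\varepsilon])$.

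For formal smoothness, suppose $A\twoheadrightarrow A'$ is a surjection in $\mathfrak A$, $\varphi'\in h^{\ver}(A')$, and $\rho_A\in D^{\ver}(A)$ is a lift of $\alpha(\varphi')$. By Lemma \ref{bijective} over $A$ there exists $\varphi_A\in h^{\ver}(A)$ with $\alpha(\varphi_A) = [\rho_A]$; its reduction $\varphi_A|_{A'}$ and $\varphi'$ both map to $[\rho_A|_{A'}]$, so Lemma \ref{bijective} over $A'$ produces $\lambda'\in 1+\mm_{A'}$ witnessing $\varphi'\sim \varphi_A|_{A'}$. Lifting $\lambda'$ to some $\lambda\in 1+\mm_A$ (possible since $\mm_A\twoheadrightarrow \mm_{A'}$), I define $\varphi\in h^{\ver}(A)$ to agree with $\varphi_A$ on $R^{\ps}$ and send $x\mapsto \lambda\varphi_A(x)$, $y\mapsto \lambda^{-1}\varphi_A(y)$. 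Then $\varphi\sim\varphi_A$ so $\alpha(\varphi) = [\rho_A]$, and by construction $\varphi$ reduces to $\varphi'$ modulo the kernel, which is exactly the smoothness lifting property.

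The only step that actually uses the shape of the presentation $R^{\ver} = R^{\ps}[[x,y]]/(xy-c)$ — and the main bookkeeping point to double-check — is that $\varphi$ is a well-defined homomorphism out of $R^{\ver}$: one has $\varphi(x)\varphi(y) = \lambda\lambda^{-1}\varphi_A(x)\varphi_A(y) = \varphi_A(c) = \varphi(c)$ since $\varphi$ agrees with $\varphi_A$ on $R^{\ps}\ni c$, so the relation $xy = c$ is preserved. This is the only obstacle I anticipate, and it dissolves immediately. Combining (a) and (b), $h^{\ver}$ is a versal hull of $D^{\ver}$, so $R^{\ver}$ is a versal deformation ring for $\chi_1\oplus\chi_2$.
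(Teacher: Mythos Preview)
Your proof is correct and follows the same approach as the paper: the paper's own proof simply invokes Schlessinger's criterion and states that both the tangent-space bijection and the smoothness of $\alpha$ follow immediately from Lemma \ref{bijective}, without writing out the details. You have unpacked precisely those details, and your argument (collapsing $\sim$ to equality over $k[\varepsilon]$ because $\mm_{k[\varepsilon]}^2=0$, and lifting the witness $\lambda'$ to adjust $\varphi_A$ for smoothness) is exactly what is implicit in the paper's one-line justification.
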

\begin{proof} According to \cite[\S 2]{schlessinger} we have to show that $\alpha$ induces a bijection 
$h^{\ver}(k[\varepsilon])\overset{\cong}{\rightarrow} D^{\ver}(k[\varepsilon])$, and for every surjection $B\twoheadrightarrow A$ in $\mathfrak A$ the map 
$$ h^{\ver}(B)\rightarrow h^{\ver}(A)\times_{D^{\ver}(A)} D^{\ver}(B)$$
is surjective. Both claims follow immediately from Lemma \ref{bijective}.
\end{proof}

\begin{remar}\label{remark_BJ} If $p=2$ then it follows from the description of $R^{\ps}$ in the proof of Proposition \ref{reducibility} that 
$R^{\ver}\cong \Lambda[[z_1, \ldots, z_5, x, y]]/( z_5^2 + 2 z_5 -xy)$. Thus $R^{\ver}$ has two irreducible components 
corresponding to the irreducible components of $\Lambda$.  The universal  deformation ring $R_{\chi_1\chi_2}$ of $1$-dimensional representation $\chi_1\chi_2$ is isomorphic to $\Lambda[[x_1, x_2]]$. The map $R_{\chi_1\chi_2}\rightarrow R^{\ver}$ induced by taking determinants
is a map of $\Lambda$-algebras, and hence induces a bijection between their irreducible components. This verifies a conjecture of B\"ockle and Juschka in this case, see \cite{boe_ju}.
\end{remar}

\section{Potentially semi-stable deformation rings}

Let $R$ be either $R_1$, $R_2$, or $R^{\ver}$, let $\rho$ be either $\rho_1$, $\rho_2$, or 
$\bigl( \begin{smallmatrix} \chi_1 & 0 \\ 0 & \chi_2 \end{smallmatrix} \bigr)$  and let $\rho^{\univ}$ be $\rho^{\univ}_1$, $\rho^{\univ}_2$ or $\rho^{\ver}$, respectively.
If $\pp$ is  a maximal ideal of $R[1/p]$, then its residue field $\kappa(\pp)$ is  a finite extension of $L$. Let $E$ be a finite extension of $L$, with the ring 
of integers $\OO_E$ and unformizer $\varpi_E$. If $x: R\rightarrow E$ is a map of $\OO$-algebras, then $\rho^{\univ}_x:= \rho^{\univ}\otimes_{R, x}E$ is a continuous representation $\rho^{\univ}_x: \gal\rightarrow \GL_2(E)$.  The image of $\gal$ is contained in $\GL_2(\OO_E)$, and reducing this 
representation modulo $\varpi_E$ we obtain $\rho$. 

We say that $x$ is potentially semi-stable if $\rho^{\univ}_x$ is a potentially semi-stable 
representation. In this case, to $\rho^{\univ}_x$ we can associate a pair of integers $\wt=(a,b)$ with $a\le b$, the Hodge--Tate weights, 
and a Weil--Deligne representation $\WD(\rho^{\univ}_x)$. We fix $\wt=(a, b)$ with $a<b$ and an 
a representation $\tau: I_{\Qp}\rightarrow \GL_2(L)$ of the inertia subgroup with an open kernel.
Kisin has shown in \cite{kisin_pst} that the locus of $x$ such that the Hodge--Tate weights of $\rho^{\univ}_x$ are equal to $\wt$ and 
$\WD(\rho^{\univ}_x)|_{I_{\Qp}}\cong \tau$ is closed in $\mSpec R[1/p]$. We will call such points of the $p$-adic Hodge type $(\wt, \tau)$.
We let $\Spec R(\wt, \tau)$ be the closure of these points in $\Spec R$
equipped with the reduced scheme structure. Thus $R(\wt, \tau)$ is a reduced $\OO$-torsion free quotient of $R$, characterized by the property 
that $x\in \mSpec R[1/p]$ lies in $\mSpec R(\wt, \tau)[1/p]$ if and only if $\rho^{\univ}_x$ is of $p$-adic Hodge type $(\wt, \tau)$.

\begin{remar}\label{variants} There are following variants  of the set up above to which our results proved below apply, but we do not state them explicitly: 
one may consider potentially crystalline instead of potentially semi-stable points. In this case we will denote the corresponding ring 
by $R^{\mathrm{cr}}(\wt, \tau)$. One may fix a continuous character $\psi: \gal\rightarrow \OO^{\times}$, and require that the representations 
have determinant equal to $\psi\varepsilon$, where $\varepsilon$ is the cyclotomic character. In this case, we will denote the rings  by $R^{\psi}$ and $R^{\psi}(\wt, \tau)$. Note that a necessary condition for 
$R^{\psi}(\wt, \tau)$ to be non-zero is that $\psi|_{I_{\Qp}}=\varepsilon^{a+b-1} \det \tau$ and $\psi \varepsilon\equiv \chi_1\chi_2 \pmod{\varpi}$. One could also look at potentially crystalline representations with the fixed determinant. 
\end{remar} 

Let $x: R\rightarrow E$ be a map of $\OO$-algebras. It follows from Proposition \ref{reducibility} that the representation $\rho^{\univ}_x$ is 
reducible if and only if the reducibility ideal is mapped to zero under the composition $R^{\ps}\rightarrow R \overset{x}{\rightarrow} E$. 
This implies that if $\rho^{\univ}_x$ is irreducible, then it remains irreducible after extending scalars. Let us assume that $x$ is potentially semi-stable
of $p$-adic Hodge type $(\wt, \tau)$. 
If $\rho_x^{\univ}$ is reducible then it is an extension $0\rightarrow \delta_1\rightarrow \rho_x^{\univ} \rightarrow \delta_2\rightarrow 0$, where 
$\delta_1, \delta_2: \gal\rightarrow \OO_E^{\times}$ are continuous characters. Since  $\rho_x^{\univ}$ is potentially semi-stable, both 
$\delta_1$ and $\delta_2$ are potentially semi-stable. Moreover, we may assume that the Hodge-Tate weight of 
$\delta_1\delta_2^{-1}$ is at least $1$, this holds automatically if the extension is non-split. 
Following Hu--Tan \cite{hu_tan} we say that $x$ is of reducibility type $1$ if 
$\delta_1\equiv \chi_1 \pmod{\varpi_E}$ (equivalently $\delta_2\equiv \chi_2 \pmod{\varpi_E}$). We say that $x$ is of reducibility type $2$ if 
$\delta_1\equiv \chi_2 \pmod{\varpi_E}$. We say that $x$ is of reducibility type $\irr$, if $\rho_x^{\univ}$ is irreducible.

Let $\ast$ be one of the indices $1$, $2$ or $\irr$, we define $I^{\ver}_{\ast}$ to be the ideal of $R^{\ver}$  and $I^{\ps}$ to be the ideal of $R^{\ps}$
given by 
$$ I^{\ver}_{\ast}:= R^{\ver}\cap \bigcap_x \mm_x, \quad I^{\ps}_{\ast}:= R^{\ps}\cap I^{\ver}_{\ast},$$
where the intersection is taken over all $x\in \mSpec R^{\ver}(\wt, \tau)[1/p]$ of reducibility type $\ast$.

\begin{lem}\label{hyper} Let $A$ be a local noetherian ring. Let $B=A[[x,y]]/(xy-c)$, with 
$c\in \mm_A$. Then $B$ is $A$-flat and $\dim B=\dim A+1$. If $A$ is reduced then $B$ is reduced. 
\end{lem}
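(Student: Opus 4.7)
The plan is to treat flatness and dimension together via a single regularity statement, and then reduce the reducedness claim to the case of a domain.

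First I would show that $xy - c$ is a nonzerodivisor on $\bar A[[x,y]]$ for every quotient $\bar A := A/I$. If $(xy - c)g = 0$ with $g = \sum a_{ij} x^i y^j$, equating the coefficient of $x^m y^n$ in $xyg = cg$ gives $a_{m-1,n-1} = c \cdot a_{m,n}$ for all $m, n \geq 1$, so iteratively $a_{m,n} = c^k a_{m+k,n+k}$, hence $a_{m,n} \in \bigcap_k c^k \bar A \subseteq \bigcap_k \mm_{\bar A}^k = 0$ by Krull's intersection theorem in the noetherian local ring $\bar A$. Flatness of $B$ over $A$ then follows by applying $(A/I) \otimes_A -$ to the exact sequence $0 \to A[[x,y]] \xrightarrow{xy-c} A[[x,y]] \to B \to 0$: since $A[[x,y]]$ is $A$-flat and $(A/I) \otimes_A A[[x,y]] = (A/I)[[x,y]]$, the associated Tor sequence yields $\Tor_1^A(A/I, B) = 0$ for every ideal $I \subseteq A$. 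The case $I = 0$ also gives $\dim B = \dim A[[x,y]] - 1 = \dim A + 1$.

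For reducedness, assume $A$ is reduced. Its finitely many minimal primes $\mathfrak p_1, \ldots, \mathfrak p_r$ give an injection $A \hookrightarrow \prod_i A/\mathfrak p_i$, which by flatness of $B$ yields $B \hookrightarrow \prod_i B/\mathfrak p_i B = \prod_i (A/\mathfrak p_i)[[x,y]]/(xy - \bar c)$, reducing us to $A$ a noetherian local domain. When $c = 0$, the identity $(x) \cap (y) = (xy)$ in the domain $A[[x,y]]$ (by coefficient comparison) embeds $B = A[[x,y]]/(xy)$ into $A[[y]] \times A[[x]]$, a product of domains. When $c \neq 0$, I would show that $xy - c$ is prime in $A[[x,y]]$: modulo $\mm_A$ it factors uniquely as $x \cdot y$ in $k[[x,y]]$, and matching constant terms in any lifted factorization $xy - c = fg$ forces $\lambda_{00}\mu_{00} = -c$ with $\lambda_{00}, \mu_{00} \in \mm_A$, while matching the $xy$-coefficient forces $\lambda_{00} + \mu_{00} \in \mm_A^2$, leading to an obstruction via iteration.

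The main obstacle is making the $c \neq 0$ irreducibility argument fully rigorous for arbitrary reduced noetherian local $A$; the iterative lifting is cleanest over a complete base via Weierstrass preparation, so the fallback is to pass to the $\mm_A$-adic completion $\hat A$ and descend reducedness along the faithfully flat map $B \to \hat A \otimes_A B$.
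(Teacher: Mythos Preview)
Your flatness and dimension arguments are correct and pleasantly direct: showing that $xy-c$ is a nonzerodivisor on $(A/I)[[x,y]]$ for every $I$ via the recursion $a_{m,n}=c\,a_{m+1,n+1}$ and Krull's intersection theorem is a clean alternative to the paper's method. The paper instead makes the substitution $z=x+y$, so that $y=z-x$ and $xy-c=-(x^{2}-zx+c)$; this exhibits $B$ as a \emph{free} $A[[z]]$-module of rank~$2$, from which flatness and $\dim B=\dim A[[z]]=\dim A+1$ are immediate. Both routes are fine here, though the paper's buys more, as you will see below.

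Where your plan breaks down is the reducedness step for $c\neq 0$. First, the argument you sketch (lifting the factorisation $xy=\bar f\bar g$ from $k[[x,y]]$ and deriving $\lambda_{00},\mu_{00}\in\bigcap_n\mm_A^{n}$) shows at best that $xy-c$ is \emph{irreducible} in $A[[x,y]]$; it does not show that $(xy-c)$ is prime, and in a non-UFD an irreducible element can generate a non-radical ideal. Second, your fallback of passing to $\hat A$ and descending reducedness along $B\hookrightarrow \hat A\otimes_A B$ requires $\hat A$ to be reduced, but the completion of a reduced noetherian local ring need not be reduced (Nagata's examples); the lemma has no excellence hypothesis, so this route is blocked.

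The paper avoids all of this with the same change of variables. After reducing to $A$ a domain exactly as you do, it observes that for the minimal prime $\pp$ the ring $(A/\pp)[[z]]$ is a domain with fraction field $\kappa$ in which $z$ is transcendental over $\kappa(\pp)$. Hence the quadratic $x^{2}-zx+c\in\kappa[x]$ is separable (its discriminant $z^{2}-4c$ is nonzero; in characteristic~$2$ the derivative is $z\neq 0$), so $\kappa[x]/(x^{2}-zx+c)$ is reduced, and the subring $(A/\pp)[[z]][x]/(x^{2}-zx+c)\cong B/\pp B$ embeds in it. No case split on $c$ is needed, and the argument is uniform in the residue characteristic.
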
 
\begin{proof} Let $z=x+y$ then $B=A[[z]][x]/(x^2-zx +c)$. Thus $B$ is a free $A[[z]]$-module of rank $2$. This implies the 
claims about flatness and dimension.  If $\pp$ is a prime of $A$ then 
  $\qq:= \pp A[[z]$ is a prime of $A[[z]]$. Since $z$ is transcendental over 
$\kappa(\pp)$, $z^2-4c$ cannot be zero in $\kappa(\qq)$. Thus $\kappa(\qq)[x]/(x^2-zx+c)$ is reduced, and the subring $A/\pp[[z]][x]/(x^2-zx+c)$ 
is also reduced.  If  $A$ is reduced then we may embed $A$ into $\prod_{\pp} A/\pp$, where the product is taken over all the minimal primes of $A$. 
Hence, $B$ can be embedded into the product of reduced rings $\prod_{\pp} A/\pp[[z]][x]/(x^2-zx+c)$.
\end{proof}

\begin{lem}\label{control_cycle}  Let $A$ be a local noetherian ring and let $\pp\in \Spec A$ be such that $\dim A/\pp=\dim A$. Let $B=A[[x,y]]/(xy-c)$, with 
$c\in \pp$.  Let $\qq$ be the ideal of $B$ generated by $\pp$ and $x$. Then $\qq$ is a prime ideal with $\dim B= \dim B/\qq=\dim A+1$. Moreover, 
 $e(A/\pp)= e(B/\qq)$, $\ell_{A_{\pp}}(A_{\pp})=\ell_{B_{\qq}}(B_{\qq})$
\end{lem}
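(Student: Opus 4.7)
The plan is to compute $B/\qq$ explicitly and then deduce everything from flatness of $A\to B$ (Lemma \ref{hyper}) together with standard Hilbert--Samuel theory. Since $c\in\pp$, a direct calculation gives $B/\qq = A[[x,y]]/(\pp, x, xy-c) \cong (A/\pp)[[y]]$; as $A/\pp$ is a domain, $\qq$ is prime, and $\dim B/\qq = \dim A/\pp + 1 = \dim A + 1 = \dim B$ by hypothesis and Lemma \ref{hyper}. The standard inequality $\mathrm{ht}(\pp) + \dim A/\pp \leq \dim A$ combined with the hypothesis $\dim A/\pp = \dim A$ forces $\mathrm{ht}(\pp)=0$, so $\pp$ is a minimal prime of $A$ and $A_\pp$ is Artinian; applied to $\qq$ in $B$ the same argument gives $B_\qq$ Artinian, so both lengths in the statement are finite.

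For the multiplicity equality, the isomorphism $B/\qq \cong (A/\pp)[[y]]$ reduces the claim to the general fact $e(R) = e(R[[y]])$ for a local noetherian ring $R$. I would prove this by filtering $R[[y]]/(\mm_R, y)^{n+1}$ by powers of $y$: regularity of $y$ in $R[[y]]$ identifies the $i$-th graded piece with $R/\mm_R^{n+1-i}$, giving $H_{R[[y]]}(n) = \sum_{k=0}^n H_R(k)$; comparing leading coefficients of the two Hilbert--Samuel polynomials yields $e(R[[y]]) = e(R)$ as required.

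For the length equality, flatness of $A\to B$ (Lemma \ref{hyper}) localises to flatness of the local map $A_\pp \to B_\qq$ of Artinian local rings, for which one has the standard identity $\ell_{B_\qq}(B_\qq) = \ell_{A_\pp}(A_\pp) \cdot \ell_{B_\qq}(B_\qq/\pp B_\qq)$. It thus remains to verify that the fibre has length $1$. Since $c\in\pp$ we have $B/\pp B \cong (A/\pp)[[x,y]]/(xy)$, with the image of $\qq$ equal to the minimal prime $(x)$ of this ``union of coordinate axes'' ring; after localising at $(x)$ the element $y$ becomes a unit, so $xy=0$ forces $x=0$, and the fibre collapses to the field $\mathrm{Frac}((A/\pp)[[y]])$. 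This final fibre computation is the only real subtlety (one has to recognise that localising at the generic point of one axis kills the other); everything else is a formal consequence of Lemma \ref{hyper} and standard multiplicity and length theory.
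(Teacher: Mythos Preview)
Your proof is correct and follows essentially the same route as the paper: compute $B/\qq\cong (A/\pp)[[y]]$ to get primality, dimension, and multiplicity, then use flatness of $A\to B$ from Lemma~\ref{hyper} for the length equality. The only difference is presentational: the paper simply asserts that $\pp B_\qq$ is the maximal ideal of $B_\qq$ (equivalently, that $y$ is a unit in $B_\qq$, so $x=cy^{-1}\in\pp B_\qq$), whereas you reach the same conclusion by computing the fibre $B_\qq/\pp B_\qq$ explicitly; and the paper takes $e(R)=e(R[[y]])$ as known, whereas you supply a Hilbert--Samuel argument.
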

\begin{proof} Since $B/\qq\cong (A/\pp)[[y]]$ we get that $\qq$ is a prime ideal of $B$,  $e(A/\pp)=e(B/\qq)$ and $\dim B/\qq=\dim A/\pp +1 = \dim A+1=\dim B$, 
where the last equality follows from Lemma \ref{hyper}. Since $B$ is $A$-flat by Lemma \ref{hyper}, $B_{\qq}$ is $A_{\pp}$-flat. Since $\pp B_{\qq}$ is the maximal ideal of $B_{\qq}$, flatness implies that $\ell(A_{\pp})=\ell(B_{\qq})$. 
\end{proof}

\begin{lem}\label{ps_ver} The map \eqref{versal} induces isomorphisms: 
$$R^{\ps}/I_1^{\ps}[[y]]\cong R^{\ver}/I_1^{\ver}, \quad R^{\ps}/I_2^{\ps}[[x]]\cong R^{\ver}/I_2^{\ver},$$
$$R^{\ps}/I^{\ps}_{\irr}[[x,y]]/(xy-c)\cong R^{\ver}/I_{\irr}^{\ver}.$$
\end{lem}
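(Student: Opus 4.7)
My plan is to deduce all three isomorphisms from the single ideal identity
\[
I_\ast^{\ver} = I_\ast^{\ps} R^{\ver} + J_\ast,
\]
for $\ast \in \{1, 2, \irr\}$, with $J_1 = (x)$, $J_2 = (y)$, and $J_\irr = (0)$. Granted this, the three stated isomorphisms drop out by quotienting $R^{\ver} = R^{\ps}[[x,y]]/(xy-c)$ and observing that $c \in I_1^{\ps} \cap I_2^{\ps}$ (type $1$ and $2$ lifts are reducible, so $c$ vanishes at them), whereas $c \notin I_\irr^{\ps}$.

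The easy inclusion $I_\ast^{\ps} R^{\ver} + J_\ast \subset I_\ast^{\ver}$ is essentially by construction: $I_\ast^{\ps} R^{\ver} \subset I_\ast^{\ver}$ since $I_\ast^{\ps} = R^{\ps} \cap I_\ast^{\ver}$, and $x \in I_1^{\ver}$ because every type $1$ lift is upper triangular with the $\chi_1$-lift as sub-representation, hence has $\varphi(x) = 0$; this vanishing is preserved under the equivalence of Lemma \ref{bijective} because $\lambda \in 1+\mm_A$ is a unit, and so the condition is well-defined on deformations. Symmetrically $y \in I_2^{\ver}$, and for $\ast = \irr$ there is no $J_\irr$ contribution.

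For the reverse inclusion I would show that both sides are radical ideals with the same maximal spectrum in $R^{\ver}(\wt,\tau)[1/p]$. The ideal $I_\ast^{\ver}$ is radical by its definition as an intersection of primes. The quotient
\[
R^{\ver}/(I_\ast^{\ps} R^{\ver} + J_\ast) \;\cong\; \begin{cases} (R^{\ps}/I_1^{\ps})[[y]] & (\ast=1),\\ (R^{\ps}/I_2^{\ps})[[x]] & (\ast=2),\\ (R^{\ps}/I_\irr^{\ps})[[x,y]]/(xy-c) & (\ast=\irr),\end{cases}
\]
is reduced (the last case invokes Lemma \ref{hyper} applied to the reduced ring $R^{\ps}/I_\irr^{\ps}$), so $I_\ast^{\ps} R^{\ver} + J_\ast$ is radical as well. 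It remains to identify the maximal ideals $\mm \in \mSpec R^{\ver}(\wt,\tau)[1/p]$ containing each: containment of $I_\ast^{\ps} R^{\ver} + J_\ast$ is equivalent to the associated lift having the correct triangular shape (upper for $\ast=1$, lower for $\ast=2$, unconstrained for $\ast=\irr$) and its pseudo-character pulling back into $V(I_\ast^{\ps})$; containment of $I_\ast^{\ver}$ is by definition that the lift itself is type $\ast$ potentially semi-stable of type $(\wt,\tau)$.

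The main obstacle will be the equivalence of these two conditions, which is the essential Hodge-theoretic input. For $\ast = \irr$ I would invoke that an irreducible two-dimensional representation is determined by its trace and determinant (so an irreducible pot.\ ss pseudo-character of type $(\wt,\tau)$ lifts uniquely to an irreducible pot.\ ss representation of the same type). For $\ast = 1, 2$ I need that any upper (resp.\ lower) triangular extension of two pot.\ ss characters of the appropriate Hodge--Tate weights and inertial types is itself pot.\ ss of type $(\wt,\tau)$; this relies on closure of pot.\ ss representations under extensions and on the additivity of Hodge--Tate weights and of the inertial type on the Weil--Deligne side. The delicate point is matching the prescribed $\tau$, which is automatic for semisimple $\tau$ but otherwise imposes the condition that the extension class be inertia-compatible; once this is verified, the two radical ideals cut out the same maximal spectrum and so coincide.
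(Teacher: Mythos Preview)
Your overall strategy matches the paper's proof closely: reduce to showing that the surjection $R^{\ver}/(I_\ast^{\ps}R^{\ver}+J_\ast)\twoheadrightarrow R^{\ver}/I_\ast^{\ver}$ is an isomorphism by checking that the source is reduced and then comparing closed points in characteristic zero. For $\ast=1,2$ this is exactly what the paper does, and your Hodge-theoretic justification (any extension of the right shape, with character pieces of the right Hodge--Tate weight and inertial type, is automatically potentially semi-stable of type $(\wt,\tau)$) is the correct input.

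The gap is in the case $\ast=\irr$. You assert that a maximal ideal $\mm\in\mSpec R^{\ver}(\wt,\tau)[1/p]$ contains $I_\irr^{\ps}R^{\ver}$ if and only if the associated lift is of type $\irr$, but this fails: the ring $B:=(R^{\ps}/I^{\ps}_\irr)[[x,y]]/(xy-c)$ has a nonempty reducible locus $V(c)$ (since $c$ lies in the maximal ideal but is not nilpotent modulo $I^{\ps}_\irr$), and closed points there correspond to \emph{reducible} lifts whose pseudo-character nonetheless lies in $V(I^{\ps}_\irr)$. Your description of $V(I^{\ver}_\irr)$ is similarly off: $I^{\ver}_\irr$ is by definition an intersection of type-$\irr$ maximal ideals, so $V(I^{\ver}_\irr)$ is their Zariski \emph{closure} and can contain reducible boundary points. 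Thus the two maximal spectra you propose to compare are not literally equal, and the equivalence you call ``the essential Hodge-theoretic input'' does not hold as stated for $\ast=\irr$.

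The paper does not attempt to match maximal spectra in this case. Instead it shows that the reducible locus in $\Spec B$ has codimension $1$ (because $\dim R^{\ps}/(I^{\ps}_\irr,c)<\dim R^{\ps}/I^{\ps}_\irr$, else type-$\irr$ points would be dense in the reducible locus), so the absolutely irreducible points are \emph{dense} in $\Spec B$. These lie in the image of the closed immersion $\Spec R^{\ver}/I^{\ver}_\irr\hookrightarrow\Spec B$ (an irreducible representation is determined by its trace), whence the immersion is a homeomorphism; reducedness of $B$ then gives the isomorphism. You also need $B$ to be $\OO$-torsion free so that $\mSpec B[1/p]$ is dense in $\Spec B$; the paper obtains this from flatness of $B$ over the $\OO$-torsion-free ring $R^{\ps}/I^{\ps}_\irr$ via Lemma~\ref{hyper}, a point your outline omits.
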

\begin{proof} Let $\ast$ be one of the following indices: $1$, $2$ or $\irr$. Then \eqref{versal} induces a surjection:
\begin{equation}\label{blast}
 R^{\ps}/I^{\ps}_{\ast}[[x,y]]/(xy-c)\cong R^{\ver}/I^{\ps}_{\ast} R^{\ver}\twoheadrightarrow R^{\ver}/I^{\ver}_{\ast}.
 \end{equation} 
 
 We will deal with the irreducible case first. To ease the notation let $A=R^{\ps}/I^{\ps}_{\irr}$ and let $B=R^{\ver}/I^{\ps}_{\irr} R^{\ver}$. 
Lemma \ref{hyper} implies that $B$ is reduced and 
$A$-flat. Since by construction  a subset of $\Spec A[1/p]$ is dense in $\Spec A$, $A$ is $\OO$-torsion free. Flatness implies that $B$ is $\OO$-torsion free, hence
$\Spec B[1/p]$ is dense in $\Spec B$. Since $B[1/p]$ is Jacobson, $\mSpec B[1/p]$ is dense in $\Spec B$. The reducible locus in $\Spec B$ is 
given by $c=0$, and is isomorphic to $\Spec (A/c)[[x, y]]/(xy)$. Now $\dim A/c < \dim A$, since otherwise points of type $\irr$ would have to be dense in $\Spec A/c$. Hence the reducible locus in $\Spec B$ has codimension $1$. Thus the subset $\Sigma'$ of $\mSpec B[1/p]$,  consisting of those maximal ideals, 
which correspond to absolutely irreducible representations, is dense in $\Spec B$. Since an absolutely irreducible representation is determined up to isomorphism by its
trace, $\Sigma'$ lies in the image of $\Spec R^{\ver}/I^{\ver}_{\irr}\rightarrow \Spec B$. Since this map is a closed immersion, it is a homeomorphism. 
Since $B$ is reduced, we obtain the assertion. 

 If $\ast=1$ then $c$ is contained in every maximal ideal of type $1$, and hence $c\in I^{\ps}_1$. It follows from the construction 
 of the versal representation that any maximal ideal of $R^{\ver}(\wt, \tau)[1/p]$ of type $1$ will contain $y$, and any maximal ideal 
 of $R^{\ver}/(I_1^{\ps}, x)[1/p]$ is of type $1$. Thus  \eqref{blast}  induces  a closed immersion 
 $$\Spec R^{\ver}/I_1^{\ver}\hookrightarrow \Spec R^{\ver}/(I_1^{\ps}, x),$$ and $\mSpec R^{\ver}/(I_1^{\ps}, x)[1/p]$ lies in its image. 
Now $R^{\ver}/(I_1^{\ps}, x)\cong R^{\ps}/I^{\ps}_1[[y]]$, and hence is reduced and $\OO$-torsion free. The same argument as in the irreducible 
case allows to conclude. If $\ast=2$ then the argument is the same interchanging $x$ and $y$.
\end{proof}

\begin{lem}\label{cycle_2} The isomorphisms $t_1: R^{\ps}\overset{\cong}{\rightarrow} R_1$, $t_2: R^{\ps}\overset{\cong}{\rightarrow} R_2$ induces isomorphisms:
$R^{\ps}/I_1^{\ps} \cap I_{\irr}^{\ps}\cong R_1(\wt, \tau)$, $R^{\ps}/I_2^{\ps}\cap I_{\irr}^{\ps}\cong R_2(\wt, \tau)$.
\end{lem}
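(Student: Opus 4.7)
The plan is to prove the first isomorphism; the second follows by symmetrically swapping the roles of $1$ and $2$. The strategy is to transport the question across the isomorphism $t_1\colon R^{\ps}\to R_1$ of Proposition \ref{trace_rep}, and then show the two resulting quotients of $R^{\ps}$ coincide by verifying both are reduced, $\OO$-flat, and have the same characteristic-zero closed points.

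First I would verify that $R^{\ps}/(I_1^{\ps}\cap I_{\irr}^{\ps})$ is reduced and $\OO$-torsion free. By Lemma \ref{ps_ver}, $R^{\ps}/I_1^{\ps}$ sits as the constant-term subring of $R^{\ver}/I_1^{\ver}\cong R^{\ps}/I_1^{\ps}[[y]]$, which is reduced and $\OO$-flat by construction as an intersection of maximal ideals of characteristic-zero points; analogously for $R^{\ps}/I_{\irr}^{\ps}\subset R^{\ver}/I_{\irr}^{\ver}$. The intersection quotient embeds diagonally into the product and so inherits both properties. On the other side, $R_1(\wt,\tau)$ is reduced and $\OO$-flat by definition.

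Next I would match the characteristic-zero closed points in $\mSpec R^{\ps}[1/p]$. A point $x$ factors through $R_1(\wt,\tau)$ under $t_1$ iff the corresponding deformation of $\rho_1$ is pst of type $(\wt,\tau)$; it factors through $R^{\ps}/(I_1^{\ps}\cap I_{\irr}^{\ps})$ iff $x$ lies in the $(\tr,\det)$-image of the type $1$ or $\irr$ pst points of $\mSpec R^{\ver}(\wt,\tau)[1/p]$. For pseudo-irreducible $(t,d)$ the unique abstract representation with trace $t$ is irreducible, and both conditions reduce to pst of this representation. For pseudo-reducible $(t,d)$, Proposition \ref{reducibility} gives $(t,d)=(\delta_1+\delta_2,\delta_1\delta_2)$ with $\delta_1\equiv\chi_1$; since $\chi_1\neq\chi_2$, every $\gal$-invariant $\OO_E$-lattice in a split representation decomposes as a direct sum, so the $R_1$-lift (whose residual is the non-split $\rho_1$) is necessarily a non-split extension. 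The hypothesis $\chi_1\chi_2^{-1}\neq\Eins,\omega^{\pm 1}$ combined with Tate's local Euler characteristic formula forces $\Ext^1_{\gal}(\delta_2,\delta_1)$ to be one-dimensional, so this non-split extension is unique up to isomorphism. Since potentially semistable representations form an abelian subcategory of $\gal$-representations, the split representation $\delta_1\oplus\delta_2$ and the non-split extension with the same characters share their Hodge--Tate weights and inertial WD type and are pst simultaneously; the two conditions therefore agree.

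Having established the same characteristic-zero closed points, the fact that both quotients are reduced and $\OO$-flat forces their defining ideals in $R^{\ps}$ to coincide, yielding the desired isomorphism. The main obstacle is the split-versus-non-split dichotomy in the pseudo-reducible case; it is resolved by the lattice-decomposition argument that forces the $R_1$-lift to be non-split (using $\chi_1\neq\chi_2$) together with the abelian-subcategory property of pst representations.
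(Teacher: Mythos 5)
Your overall strategy (both quotients are reduced and $\OO$-torsion free, so it suffices to match closed points of the generic fibres) is the same as the paper's, and the irreducible case and the lattice/reduction arguments are fine. But there is a genuine gap in the pseudo-reducible case, at the step ``the split representation $\delta_1\oplus\delta_2$ and the non-split extension with the same characters \dots are pst simultaneously.'' The fact that potentially semi-stable representations are stable under subobjects and quotients gives only one implication: if the non-split extension is pst, so is its semi-simplification. The converse is false in general: an extension of $\delta_2$ by $\delta_1$ with both $\delta_i$ pst is classified by $H^1(\gal,\delta_1\delta_2^{-1})$, and only the classes in $H^1_g$ give de Rham (hence pst) representations; $H^1_g$ is a proper subspace unless the Hodge--Tate weight of $\delta_1\delta_2^{-1}$ is at least $1$ (given $\delta_1\delta_2^{-1}\neq \varepsilon$, which the hypothesis on $\chi_1\chi_2^{-1}$ guarantees). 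This is exactly why the paper builds the weight normalization into the definition of reducibility type (``the Hodge--Tate weight of $\delta_1\delta_2^{-1}$ is at least $1$''), and why in the converse direction it explicitly invokes that inequality to conclude that the non-split extension $0\rightarrow\delta_1\rightarrow\tilde\rho\rightarrow\delta_2\rightarrow 0$ is pst of type $(\wt,\tau)$.

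Concretely, your argument loses the directionality of the extension: a type-$1$ point of $R^{\ver}(\wt,\tau)$ hands you an ordered pair $(\delta_1,\delta_2)$ with $\delta_1\equiv\chi_1$ \emph{and} the sub $\delta_1$ of larger Hodge--Tate weight, and only the non-split extension of $\delta_2$ by $\delta_1$ in that direction is guaranteed to be pst (this is the one reducing to $\rho_1$); the extension in the opposite direction, which reduces to $\rho_2$, need not be pst. Without the $H^1_g=H^1$ input your matching of points breaks down precisely where the lemma has content. To repair the proof, replace the ``abelian subcategory'' appeal by: $\Ext^1_{\gal}(\delta_2,\delta_1)$ is one-dimensional and every class in it is potentially crystalline because the Hodge--Tate weight of $\delta_1\delta_2^{-1}$ is at least $1$ and $\delta_1\delta_2^{-1}\neq\Eins,\varepsilon^{\pm1}$; then check, as you do via the lattice argument, that this non-split extension admits a lattice reducing to $\rho_1$.
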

\begin{proof} Since the rings are $\OO$-torsion free and reduced, it is enough to show that the maps induce a bijection 
on maximal spectra after inverting $p$. We will show the statement for $R_1(\wt, \tau)$, the proof for $R_2(\wt, \tau)$ is 
the same. Since $t_1$ is an isomorphism it induces a bijection between $\mSpec R_1[1/p]$ and $\mSpec R^{\ps}[1/p]$ and hence 
it is enough to show that every $x \in \mSpec R_1(\wt, \tau)[1/p]$ is mapped to 
$V(I_1^{\ps}\cap I^{\ps}_{\irr})$ and every $y\in  V(I_1^{\ps}\cap I^{\ps}_{\irr}) \cap \mSpec R^{\ps}[1/p]$ has a preimage in 
$\mSpec R_1(\wt, \tau)[1/p]$.

Let $E$ be a finite extension of $L$ with the ring of integers $\OO_E$ and let $x: R_1(\wt, \tau)\rightarrow E$ be an $E$-valued point 
of $\Spec R_1(\wt, \tau)$. Let $\rho_x:= \rho^{\univ}_1\otimes_{R_1, x} E$. The image of $R_1$ under $x$ is contained in $\OO_E$, and 
we let $\rho_x^0:=\rho^{\univ}_1\otimes_{R_1, x} \OO_E$. Then $\rho_x^0$ is a $\gal$-invariant $\OO_E$-lattice in $\rho_x$, and its 
reduction modulo the uniformizer $\varpi_E$, is isomorphic to $\rho_1$. 

If $\rho_x$ is reducible then it is an extension $0\rightarrow \delta_1\rightarrow \rho_x\rightarrow \delta_2\rightarrow 0$, where $\delta_1, 
\delta_2:\gal\rightarrow E^{\times}$ are continuous characters. This extension is non-split, as the reduction of $\rho^0_x$ modulo $\varpi_E$
is a non-split extension of distinct characters. Moreover, $\delta_1$ is congruent to $\chi_1$  and $\delta_2$ is congruent to $\chi_2$ modulo 
$\varpi_E$. Since $x\in \Spec R_1(\wt, \tau)$, $\rho_x$ is potentially semi-stable, hence both $\delta_1$ and $\delta_2$ are potentially semi-stable, 
and the Hodge-Tate weight of $\delta_1$ is greater than the Hodge-Tate weight of $\delta_2$.

 By conjugating 
$\rho^0_x$ with $\bigl (\begin{smallmatrix} \varpi^n & 0 \\ 0 & 1\end{smallmatrix}\bigr)$, for a suitable  $n\in \ZZ$, we will 
obtain a new $\gal$-invariant $\OO_E$-lattice in $\rho_x$, such that is reduction modulo $\varpi_E$ is congruent to $\chi_1\oplus \chi_2$.
This gives an $\OO_E$-valued point in $\Spec R^{\ver}(\wt, \tau)$, which has the same trace as $\rho_x$. Hence, the map 
$\Spec R_1(\wt, \tau)\rightarrow \Spec R^{\ps}$ maps $x$ into $V(I^{\ps}_1)$.  

Conversely, let $y$ be an $E$-valued point of $R^{\ps}/I^{\ps}_1$, 
then the determinant corresponding to $y$ is a pair $(\delta_1+\delta_2, \delta_1\delta_2)$, such that there is $z: R^{\ver}(\wt, \tau)\rightarrow E$ fitting 
into the exact sequence $0\rightarrow \delta_1\rightarrow \rho^{\ver}\otimes_{R^{\ver}, z} E\rightarrow \delta_2\rightarrow 0$, such that 
$\delta_1\equiv \chi_1\pmod{\varpi_E}$, $\delta_2\equiv \chi_2 \pmod{\varpi_E}$ and the Hodge--Tate weight of $\delta_1\delta_2^{-1}$ is at least $1$. Since $\Ext^1_{\gal}(\delta_2, \delta_1)$ is non-zero, 
there is a non-split extension $0\rightarrow \delta_1\rightarrow \tilde{\rho}\rightarrow \delta_2\rightarrow 0$. Since the Hodge--Tate weight of 
$\delta_1\delta_2^{-1}$ is at least $1$, the representation $\tilde{\rho}$ is potentially semi--stable of $p$-adic Hodge type $(\wt, \tau)$. Since 
$\chi_1\chi_2^{-1}\neq \Eins, \omega^{\pm 1}$, $\delta_1\delta_2^{-1}\neq \Eins, \varepsilon^{\pm 1}$, the extension is in fact potentially crystalline.  

We may choose a $\gal$-invariant 
$\OO_E$-lattice $\rho^0$ in $\rho$, such that $\rho^0\otimes_{\OO_E} k$ is a non-split extension of $\chi_1$ by $\chi_2$. Since 
$\Ext^1_{\gal}(\chi_2, \chi_1)$ is one dimensional, this representation is isomorphic to $\rho_1$, and thus $\tilde{\rho}$ gives us an $E$-valued 
point of $\Spec R_1(\wt, \tau)$. Hence, $y$ lies in the image of $\Spec R_1(\wt, \tau)\rightarrow \Spec R^{\ps}$.

In the irreducible case the argument is easier. If $\rho_x$ is irreducible then after extending scalars to $E':= E[ \sqrt{\varpi_E}]$, we will be able  
to find a $\gal$-invariant $\OO_{E'}$ lattice in $\rho_x\otimes_E E'$ with reduction modulo $\varpi_{E'}$ isomorphic to $\chi_1 \oplus \chi_2$
by arguing in the same way as in the reducible case.   
This gives us an $E'$-valued point in $\Spec R^{\ver}(\wt, \tau)$. As remarked after Remark \ref{variants} $\rho_x$ is absolutely irreducible. Since the representation obtained by extending scalars is irreducible and has the same trace as $\rho_x$, we deduce that the 
image of $x$ in $\mSpec R^{\ps}[1/p]$ lies in $V(I^{\ps}_{\irr})$. 

Conversely, let $y$ be an $E$-valued point of $\Spec R^{\ps}/I^{\ps}_\irr$. Let $x$ be the image of $y$ in $\mSpec R_1[1/p]$ under the map induced by the isomorphism 
$t_1: R^{\ps}\overset{\cong}{\rightarrow} R_1$. It follows from the definition of $I^{\ps}_{\irr}$ that there is a finite extension $E'$ of $E$ and an $E'$-valued point $
z$ of $R^{\ver}(\wt, \tau)$, such that $\rho^{\ver}_z$ is irreducible with trace equal to $t^{\univ}_y$. In particular, $\rho^{\ver}_z$ and $\rho_x$ have the same trace. 
As remarked after Remark \ref{variants} $\rho^{\ver}_z$ is absolutely irreducible, thus the equality of traces implies that 
$\rho^{\ver}_z$ is isomorphic to $\rho_x \otimes_E E'$ as $\gal$-representations. Hence, $\rho_x$ is potentially semi-stable of $p$-adic Hodge type equal to 
$(\wt, \tau)$. This implies that $x$ lies in $\mSpec R_1(\wt, \tau)[1/p]$.
%
\end{proof}
Recall,  \cite[\S V.A]{mult},  that the group of $d$-dimensional cycles $\mathcal Z_d(A)$ of  a noetherian ring $A$  is a free abelian group generated by $\pp\in \Spec A$ with $\dim A/\pp = d$.

\begin{lem}\label{cycle_1} Let $d=\dim R^{\ver}(\wt, \tau)$ then there is an equality of cycles in $\mathcal Z_d(R^{\ver})$:
$$ z_d( R^{\ver}(\wt, \tau))= z_d( R^{\ver}/I^{\ver}_1\oplus R^{\ver}/I^{\ver}_{\irr} \oplus R^{\ver}/I^{\ver}_2).$$
\end{lem}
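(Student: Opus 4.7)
My plan is to match up the $d$-dimensional minimal primes on both sides. Since $\mSpec R^{\ver}(\wt,\tau)[1/p]$ is the disjoint union of closed points of the three reducibility types, the definition of $R^{\ver}(\wt,\tau)$ as a reduced closed subscheme gives $R^{\ver}(\wt,\tau) = R^{\ver}/J$ with $J := I^{\ver}_1 \cap I^{\ver}_{\irr} \cap I^{\ver}_2$. All four ideals are radical, being intersections of maximal ideals, so the relevant quotients are reduced and for each the cycle $z_d$ is the sum of $[R^{\ver}/\pp]$ with multiplicity one over the $d$-dimensional minimal primes. Hence it suffices to show that the $d$-dimensional minimal primes of $J$ are, as a set, the disjoint union of the $d$-dimensional minimal primes of $I^{\ver}_1$, $I^{\ver}_{\irr}$ and $I^{\ver}_2$.

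To establish pairwise disjointness I invoke the presentations of Lemma \ref{ps_ver}. The minimal primes of $R^{\ver}/I^{\ver}_1 \cong R^{\ps}/I^{\ps}_1[[y]]$ are of the form $\pp_1[[y]]$ with $\pp_1$ minimal in $R^{\ps}/I^{\ps}_1$, so in particular they contain $x \in I^{\ver}_1$ but not $y$; symmetrically, minimal primes of $R^{\ver}/I^{\ver}_2$ contain $y$ but not $x$. For $R^{\ver}/I^{\ver}_{\irr} \cong R^{\ps}/I^{\ps}_{\irr}[[x,y]]/(xy-c)$, Lemma \ref{hyper} ensures that this ring is reduced and flat over $R^{\ps}/I^{\ps}_{\irr}$, so its minimal primes pull back from minimal primes of $R^{\ps}/I^{\ps}_{\irr}$, none of which contain $c$ since $c$ is a non-zerodivisor in $R^{\ps}/I^{\ps}_{\irr}$. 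On the other hand $c \in I^{\ps}_1 \subset I^{\ver}_1$ and $c \in I^{\ps}_2 \subset I^{\ver}_2$, so $c$ belongs to every prime over $I^{\ver}_1$ or $I^{\ver}_2$. This separates the irreducible family from the other two, and the $x$/$y$ asymmetry separates types $1$ and $2$.

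For the matching, let $\pp$ be a $d$-dimensional minimal prime of $J$. Primality applied to $J = \bigcap I^{\ver}_\ast$ yields $\pp \supseteq I^{\ver}_\ast$ for some $\ast$, and any strictly smaller prime over $I^{\ver}_\ast$ would still lie over $J$, contradicting the minimality of $\pp$; hence $\pp$ is a $d$-dimensional minimal prime of that $I^{\ver}_\ast$. If $\pp$ contained two of the $I^{\ver}_\ast$'s, the previous paragraph's analysis gives a direct contradiction: for instance, being minimal over $I^{\ver}_1$ forces $\pp$ to avoid $y$, yet $I^{\ver}_2 \subseteq \pp$ forces $y \in \pp$. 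Conversely, since $\dim R^{\ver}/I^{\ver}_\ast \le \dim R^{\ver}/J = d$, any $d$-dimensional minimal prime of an $I^{\ver}_\ast$ is automatically a minimal prime of $J$. Combining these observations yields the claimed equality of cycles.

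The main subtle point to justify is that $c$ is a non-zerodivisor in the reduced ring $R^{\ps}/I^{\ps}_{\irr}$, equivalently that $c \notin \pp$ for every minimal prime $\pp$ of $R^{\ps}/I^{\ps}_{\irr}$; this follows because the original defining set of absolutely irreducible maximal ideals of $R^{\ps}/I^{\ps}_{\irr}[1/p]$ is dense in each irreducible component of $V(I^{\ps}_{\irr})$, and $c$ is invertible at every such point (as the reducibility ideal is trivial there). Everything else is clean bookkeeping once Lemma \ref{ps_ver} is in hand.
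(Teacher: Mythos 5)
Your proof is correct and follows essentially the same route as the paper: both reduce the cycle identity to showing that a $d$-dimensional prime can be minimal over at most one of $I^{\ver}_1$, $I^{\ver}_{\irr}$, $I^{\ver}_2$, separating type $\irr$ from types $1,2$ via the element $c$ and types $1$ from $2$ via $x$ and $y$ through Lemma \ref{ps_ver}. The only cosmetic difference is that the paper argues the disjointness from the irreducible part by density of points of each reducibility type in the corresponding components, whereas you use flatness from Lemma \ref{hyper} together with $c$ being a non-zerodivisor on $R^{\ps}/I^{\ps}_{\irr}$; both are valid.
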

\begin{proof} Since $R^{\ver}(\wt, \tau)= R^{\ver}/(I_1^{\ver}\cap I_{\irr}^{\ver} \cap I_{2}^{\ver})$, it is enough to 
show that $\qq\in \Spec R^{\ver}$ with $\dim R^{\ver}/\qq=d$ can lie in the support of at most one 
of the modules $R^{\ver}/I^{\ver}_1$, $R^{\ver}/I^{\ver}_{\irr}$, $R^{\ver}/I^{\ver}_2$. If $\qq$ lies in the support 
of $R^{\ver}/I_{\ast}^{\ver}$, then for dimension reasons it has to be a minimal prime in $V(I^{\ver}_{\ast})$, and thus 
points of type $\ast$ are dense in $V(\qq)$.  If $\ast$ is $1$ or $2$ then $c\in\qq$ and hence $V(\qq)$ does not contain
points of type $\irr$. If $\qq$ lies in the support of both $R^{\ver}/I_1^{\ver}$ and $R^{\ver}/I_2^{\ver}$ then it will lie in 
$V((x, y))$, which has codimension $1$, as follows from Lemma \ref{ps_ver}.
\end{proof}
 
\section{The Breuil--M\'ezard conjecture}\label{BM}
Recall that the reducible locus in $R_1$, $R_2$, $R^{\ps}$ and $R^{\ver}$ is defined by the equation $c=0$. 
The isomorphism $t_1: R^{\ps}\overset{\cong}{\rightarrow} R_1$,   $t_2: R^{\ps}\overset{\cong}{\rightarrow} R_2$, \eqref{versal} 
induce isomorphisms: $$t_1: R^{\ps}/c\overset{\cong}{\rightarrow} R_1/c,\quad  t_2:  R^{\ps}/c\overset{\cong}{\rightarrow} R_2/c, 
\quad R^{\ps}/c[[x, y]]/(xy)\overset{\cong}{\rightarrow} R^{\ver}/c.$$ For  $\pp_1\in \Spec R_1/c$ let $\qq_1$ be the ideal
of $R^{\ver}/c$ defined by $\qq_1:=(t_1^{-1}(\pp_1), x)$. Then $R^{\ver}/\qq_1\cong R_1/\pp[[y]]$, and so 
$\qq_1\in \Spec R^{\ver}/c$, $\dim R^{\ver}/\qq_1= \dim R_1/\pp_1 +1$, $e(R^{\ver}/\qq_1)=e(R_1/\pp_1)$. Similarly,
for $\pp_2\in \Spec R_2/c$ we let  $\qq_2:=(t^{-1}_2(\pp_2), y)$, then $R^{\ver}/\qq_2\cong R_2/\pp[[x]]$, $\dim R^{\ver}/\qq_2= \dim R_2/\pp_2 +1$, $e(R^{\ver}/\qq_2)=e(R_2/\pp_2)$. Hence, for all $0\le i\le \dim R^{\ps}/(c)$ the map $\pp_1\mapsto \qq_1$, $\pp_2\mapsto \qq_2$ induces an injection
\begin{equation}\label{cycle_map}
\alpha: \mathcal Z_i(R_1/(c))\oplus \mathcal Z_i(R_2/(c))\hookrightarrow \mathcal Z_{i+1}(R^{\ver}/(c)).
\end{equation}
Moreover, this map preserves Hilbert--Samuel multiplicities. 

\begin{thm}\label{bm_cycle} Assume that $\Spec R_1(\wt, \tau)/\varpi$ (equivalently, $\Spec R_2(\wt, \tau)/\varpi$) is contained in the reducible locus. Let $d$ be the 
dimension of $R^{\ver}(\wt, \tau)$ then there is an equality of $(d-1)$-dimensional cycles: 
$$ z_{d-1}(R^{\ver}(\wt, \tau)/\varpi)= \alpha( z_{d-2}(R_1(\wt, \tau)/\varpi)+ z_{d-2}(R_2(\wt, \tau)/\varpi)). $$
\end{thm}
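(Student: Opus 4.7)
The plan is to decompose the $d$-cycle of $R^{\ver}(\wt,\tau)$ via Lemma \ref{cycle_1}, pass to $(d-1)$-cycles modulo $\varpi$, and match the three resulting contributions with $\alpha$ applied to cycles on $R_1(\wt,\tau)/\varpi$ and $R_2(\wt,\tau)/\varpi$. Lemma \ref{cycle_1} gives $z_d(R^{\ver}(\wt,\tau)) = \sum_{\ast} z_d(R^{\ver}/I_\ast^{\ver})$ with $\ast \in \{1,\irr,2\}$. Since each summand is reduced and $\OO$-flat, $\varpi$ is a non-zero-divisor on each, and additivity of cycles yields
$$z_{d-1}(R^{\ver}(\wt,\tau)/\varpi) = \sum_{\ast \in \{1,\irr,2\}} z_{d-1}((R^{\ver}/I_\ast^{\ver})/\varpi).$$

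For $\ast=1$, Lemma \ref{ps_ver} identifies $(R^{\ver}/I_1^{\ver})/\varpi \cong (R^{\ps}/(I_1^{\ps},\varpi))[[y]]$. Its minimal primes of dimension $d-1$ are $\bar\pp[[y]]$ for $\bar\pp$ a minimal prime of $S := R^{\ps}/(I_1^{\ps},\varpi)$ of dimension $d-2$, with multiplicity $\ell(S_{\bar\pp})$ (preserved by the flat extension $S \to S[[y]]$). Under the identification of Lemma \ref{ps_ver} these pull back to primes $(\pp, x) \subset R^{\ver}$, so combining with $t_1 \colon R^{\ps} \cong R_1$ one obtains
$$z_{d-1}((R^{\ver}/I_1^{\ver})/\varpi) = \alpha(z_{d-2}(R^{\ps}/(I_1^{\ps},\varpi))),$$
where the right-hand cycle is viewed on $R_1/c$ via $t_1$; the case $\ast = 2$ is symmetric.

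The key step is $\ast = \irr$. The hypothesis together with Lemma \ref{cycle_2} implies $c$ is nilpotent in $A := R^{\ps}/(I_{\irr}^{\ps},\varpi)$, so by Lemma \ref{ps_ver}, $(R^{\ver}/I_{\irr}^{\ver})/\varpi \cong B := A[[x,y]]/(xy - \bar c)$ with $\bar c$ nilpotent. Since $\bar c$ is nilpotent, $xy$ is nilpotent and every prime of $B$ contains $x$ or $y$. Inverting $y$ gives $B[1/y] \cong A((y))$ (as $x = \bar c y^{-1}$), and the flat local homomorphism $A_{\bar\pp} \to A_{\bar\pp}((y))$ preserves length; this shows that the minimal primes of dimension $d-1$ of $B$ are exactly $(\bar\pp, x)$ and $(\bar\pp, y)$ for $\bar\pp$ a minimal prime of $A$ of dimension $d-2$, each with multiplicity $\ell(A_{\bar\pp})$. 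Consequently
$$z_{d-1}((R^{\ver}/I_{\irr}^{\ver})/\varpi) = \alpha(z_{d-2}(A)) + \alpha(z_{d-2}(A)),$$
where the two occurrences of $z_{d-2}(A)$ are viewed as cycles on $R_1/c$ (via $t_1$) and on $R_2/c$ (via $t_2$) respectively.

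To assemble, I would use the short exact sequence $0 \to R^{\ps}/(I_1^{\ps} \cap I_{\irr}^{\ps}) \to R^{\ps}/I_1^{\ps} \oplus R^{\ps}/I_{\irr}^{\ps} \to R^{\ps}/(I_1^{\ps} + I_{\irr}^{\ps}) \to 0$, noting that $\dim V(I_1^{\ps} + I_{\irr}^{\ps}, \varpi) < d - 2$ (since $V(I_1^{\ps}) \subset V(c)$ while $c$ is not nilpotent on the $\OO$-flat reduced scheme $V(I_{\irr}^{\ps})$, so the intersection $V(I_1^{\ps}) \cap V(I_{\irr}^{\ps})$ is a proper closed subscheme of $V(I_{\irr}^{\ps})$). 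This gives $z_{d-2}(R_1(\wt,\tau)/\varpi) = z_{d-2}(R^{\ps}/(I_1^{\ps},\varpi)) + z_{d-2}(A)$, and symmetrically for $R_2$. Summing the three contributions and using linearity of $\alpha$ produces the desired identity. The main obstacle will be the multiplicity computation in the irreducible piece: one must verify that both the $x=0$ and $y=0$ components appear with multiplicity exactly $\ell(A_{\bar\pp})$, which requires careful tracking of how the flat extension $A_{\bar\pp} \to A_{\bar\pp}((y))$ interacts with the nilpotents introduced by $\bar c$.
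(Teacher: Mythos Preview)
Your overall strategy matches the paper's: decompose via Lemma~\ref{cycle_1}, reduce modulo $\varpi$, handle the three pieces via Lemma~\ref{ps_ver}, and reassemble using Lemma~\ref{cycle_2}. Your treatment of the irreducible piece is a direct reproof of the multiplicity statement in Lemma~\ref{control_cycle} (the paper simply cites that lemma), and your argument there is correct: the flatness of $B$ over $A$ from Lemma~\ref{hyper} together with $x=\bar c\,y^{-1}\in \bar\pp B_q$ gives $\mm_{A_{\bar\pp}}B_q=\mm_{B_q}$, hence lengths agree.

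There is, however, a genuine gap in the assembly step. You claim $\dim V(I_1^{\ps}+I_{\irr}^{\ps},\varpi)<d-2$, but your justification only shows that $V(I_1^{\ps})\cap V(I_{\irr}^{\ps})$ is a proper closed subscheme of each irreducible component of $V(I_{\irr}^{\ps})$, which yields $\dim V(I_1^{\ps}+I_{\irr}^{\ps})\le d-2$, not $\le d-3$. There is no reason $\varpi$ should be regular on $R^{\ps}/(I_1^{\ps}+I_{\irr}^{\ps})$; the intersection of two $\OO$-flat reduced subschemes can have components supported in characteristic $p$ (e.g.\ $V(t)\cap V(t-p)$ in $\Spec\Zp[[t]]$), in which case adding $\varpi$ does not drop the dimension. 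So as written, the short exact sequence argument does not give the cycle identity you need.

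The fix is the paper's: reverse the order of operations. First establish
\[
z_{d-1}\bigl(R^{\ps}/(I_1^{\ps}\cap I_{\irr}^{\ps})\bigr)=z_{d-1}(R^{\ps}/I_1^{\ps})+z_{d-1}(R^{\ps}/I_{\irr}^{\ps}),
\]
which only requires $\dim V(I_1^{\ps}+I_{\irr}^{\ps})\le d-2$ (exactly what you proved). Then, since all three rings are reduced and $\OO$-torsion free, $\varpi$ is regular on each, and the general fact (\cite[2.2.13]{emertongee}) that equal $d$-cycles of modules on which $x$ is regular yield equal $(d-1)$-cycles modulo $x$ gives the desired identity at level $d-2$. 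Alternatively, your SES approach can be salvaged by observing that $\mathrm{Tor}_1(R^{\ps}/(I_1^{\ps}+I_{\irr}^{\ps}),R^{\ps}/\varpi)$ injects into $R^{\ps}/(I_1^{\ps}\cap I_{\irr}^{\ps},\varpi)$ (since $R^{\ps}/I_1^{\ps}$ and $R^{\ps}/I_{\irr}^{\ps}$ are $\OO$-torsion free) and has the same $(d-2)$-cycle as $R^{\ps}/(I_1^{\ps}+I_{\irr}^{\ps},\varpi)$, so the two error terms cancel; but this is more work than simply decomposing before reducing.
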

\begin{proof} If $M_1$ and $M_2$ are finally generated $d$-dimensional modules over a noetherian ring $R$, such that 
$z_d(M_1)=z_d(M_2)$, and $x\in R$ is both $M_1$- and $M_2$-regular, then $z_{d-1}(M_1/x)= z_{d-1}(M_2/x)$, \cite[2.2.13]{emertongee}.  This fact and Lemma
\ref{cycle_1} imply that 
\begin{equation}
\begin{split}
 z_{d-1}(R^{\ver}(\wt, \tau)/\varpi)= &z_{d-1}(R^{\ver}/(I_1^{\ver}, \varpi))+z_{d-1}(R^{\ver}/(I_{\irr}^{\ver}, \varpi))\\&+z_{d-1}(R^{\ver}/(I_2^{\ver}, \varpi)). 
 \end{split}
 \end{equation}
 Simirlarly from Lemma \ref{cycle_2} one obtains
 \begin{equation}
 z_{d-2}(R_1(\wt, \tau)/\varpi)=z_{d-2}(R_1/(t_1(I_1^{\ps}), \varpi))+z_{d-2}(R_1/(t_1(I_{\irr}^{\ps}), \varpi)).
 \end{equation}
\begin{equation}
 z_{d-2}(R_2(\wt, \tau)/\varpi)=z_{d-2}(R_2/(t_2(I_2^{\ps}), \varpi))+z_{d-2}(R_2/(t_2(I_{\irr}^{\ps}), \varpi)).
 \end{equation}
It is immediate from Lemma \ref{ps_ver} and the definition of $\alpha$ that 
$$ \alpha(z_{d-2}(R_1/(t_1(I_1^{\ps}), \varpi)))= z_{d-1}(R^{\ver}/(I_1^{\ver}, \varpi)),$$
$$\alpha(z_{d-2}(R_2/(t_1(I_1^{\ps}), \varpi)))= z_{d-1}(R^{\ver}/(I_2^{\ver}, \varpi)).$$
The assumption that the special fibre of the potentially semi-stable ring is contained in the reducible locus implies 
that $c$ is nilpotent in $R^{\ps}/(I^{\ps}_{\irr}, \varpi)$. It follows from Lemmas \ref{ps_ver}, \ref{control_cycle} that 
$$z_{d-1}(R^{\ver}/(I^{\ver}_{\irr}, \varpi))=\alpha( z_{d-2}(R_1/(t_1(I_{\irr}^{\ps}), \varpi))+ z_{d-2}(R_2/(t_2(I_{\irr}^{\ps}), \varpi))).$$
\end{proof}

\begin{cor} Under the assumption of Theorem \ref{bm_cycle}, we have an equality of the Hilbert--Samuel multiplicities:
$$e(R^{\ver}(\wt, \tau)/\varpi)=e(R_1(\wt, \tau)/\varpi)+ e(R_2(\wt, \tau)/\varpi).$$
\end{cor}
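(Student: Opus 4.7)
The plan is to deduce the corollary directly from Theorem \ref{bm_cycle} by applying the Hilbert--Samuel degree map to the asserted cycle identity. Recall that for any finitely generated module $M$ of dimension $d$ over a local noetherian ring $(R, \mm)$, the multiplicity decomposes as
$$e(M) = \sum_{\pp} \ell_{R_\pp}(M_\pp)\, e(R/\pp),$$
where the sum runs over primes $\pp \subset R$ with $\dim R/\pp = d$. Thus the assignment $[R/\pp] \mapsto e(R/\pp)$ extends to a $\ZZ$-linear homomorphism $\deg \colon \mathcal Z_d(R) \to \ZZ$ satisfying $\deg(z_d(M)) = e(M)$.

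First I would apply $\deg$ to the left-hand side of Theorem \ref{bm_cycle}, which immediately produces $e(R^{\ver}(\wt, \tau)/\varpi)$. For the right-hand side, the essential input is that the map $\alpha$ from \eqref{cycle_map} preserves degrees. This is precisely the statement recorded in the text immediately after the definition of $\alpha$: for $\pp_1 \in \Spec R_1/(c)$ with image $\qq_1 = (t_1^{-1}(\pp_1), x)$, one has $R^{\ver}/\qq_1 \cong (R_1/\pp_1)[[y]]$, so $e(R^{\ver}/\qq_1) = e(R_1/\pp_1)$; the analogous identity holds on the $R_2$-side with $(R_2/\pp_2)[[x]]$. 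Consequently applying $\deg$ to the right-hand side yields $e(R_1(\wt, \tau)/\varpi) + e(R_2(\wt, \tau)/\varpi)$, which is the desired formula.

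There is no genuine obstacle in this step; everything substantive has been done inside Theorem \ref{bm_cycle}. The only point to verify is the standard fact that the Hilbert--Samuel multiplicity of a local ring is unchanged by adjoining a formal power series variable (since $y$, respectively $x$, is regular on $(R_i/\pp_i)[[\cdot]]$ and the quotient by it recovers $R_i/\pp_i$), which underlies the multiplicity-preservation of $\alpha$. Taking degrees on both sides of the equality in Theorem \ref{bm_cycle} therefore immediately delivers the corollary.
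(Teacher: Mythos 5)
Your proposal is correct and is essentially the paper's own argument: the paper's proof is the one-line observation that the corollary follows from Theorem \ref{bm_cycle} together with the fact that the map $\alpha$ of \eqref{cycle_map} preserves Hilbert--Samuel multiplicities, which is exactly the degree-map computation you spell out. The extra details you supply (the associativity formula $e(M)=\sum_{\pp}\ell_{R_\pp}(M_\pp)e(R/\pp)$ and the invariance of multiplicity under adjoining a power series variable) are the standard facts implicitly invoked there.
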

\begin{proof} This follows from the fact that \eqref{cycle_map} preserves Hilbert--Samuel multiplicities and Theorem \ref{bm_cycle}. 
\end{proof}

In \cite{henniart}, Henniart has shown the existence  of a smooth irreducible  representation $\sigma(\tau)$ (resp. $\sigma^{\mathrm{cr}}(\tau)$) of $K:=\GL_2(\Zp)$ on an $L$-vector space, such that if $\pi$ is a smooth absolutely irreducible infinite dimensional  representation of $G:=\GL_2(\Qp)$  and  $\LLL(\pi)$ is the Weil-Deligne representation attached to $\pi$ by the classical local Langlands  correspondence then $\Hom_K(\sigma(\tau), \pi)\neq 0$ (resp. $\Hom_K(\sigma^{\mathrm{cr}}(\tau), \pi)\neq 0$) if and only if $\LLL(\pi)|_{I_{\Qp}}\cong \tau$ (resp. $\LLL(\pi)|_{I_{\Qp}}\cong \tau$ and the monodromy operator $N=0$). We have $\sigma(\tau)\cong \sigma^{\mathrm{cr}}(\tau)$ in all cases, except if $\tau\cong \chi\oplus \chi$, then $\sigma(\tau)\cong \tilde{\st}\otimes \chi\circ \det$ and $\sigma^{\mathrm{cr}}(\tau)\cong \chi\circ\det$, where $\tilde{\st}$ is the Steinberg representation of $\GL_2(\Fp)$, and we view $\chi$ as a character of $\Zp^{\times}$ via the local class field theory.

We let $\sigma(\mathbf w, \tau):=\sigma(\tau)\otimes \Sym^{b-a-1} L^2\otimes \det^a$. Then $\sigma(\mathbf w, \tau)$ is a finite dimensional $L$-vector space. Since $K$ is compact 
and the action of $K$ on $\sigma(\mathbf w, \tau)$ is continuous, there is a $K$-invariant $\OO$-lattice $\Theta$ in $\sigma(\mathbf w, \tau)$. Then $\Theta/(\varpi)$ is a smooth 
finite length $k$-representation of $K$, and we let $\overline{\sigma(\mathbf w, \tau)}$ be its semi-simplification. One may show that $\overline{\sigma(\mathbf w, \tau)}$ does not depend on the choice of a lattice.  For each smooth irreducible  $k$-representation $\sigma$ of $K$ we let $m_{\sigma}(\mathbf w, \tau)$ be the multiplicity with which $\sigma$ occurs in $\overline{\sigma(\mathbf w, \tau)}$. We let $\sigma^{\mathrm{cr}}(\mathbf w, \tau):=\sigma^{\mathrm{cr}}(\tau)\otimes  \Sym^{b-a-1} L^2\otimes \det^a$ and let $m^{\mathrm{cr}}_{\sigma}(\mathbf w, \tau)$ be the multiplicity of $\sigma$ in $\overline{\sigma^{\mathrm{cr}}(\mathbf w, \tau)}$.

\begin{thm}\label{split_bm} If the determinant is fixed let $d=3$, otherwise let $d=4$. Assume that there are finite sets  
$\{\mathcal C_{1, \sigma}\}_{\sigma} \subset \mathcal Z_{d-2}(R_1/\varpi)$, 
$\{\mathcal C_{2, \sigma}\}_{\sigma} \subset \mathcal Z_{d-2}(R_2/\varpi)$ such that for all $p$-adic Hodge types $(\wt, \tau)$ we have equalities 
$$ z_{d-2}( R_1(\wt, \tau)/\varpi)=\sum_{\sigma} m_{\sigma}(\wt, \tau) \mathcal C_{1, \sigma}, \quad  z_{d-2}( R_2(\wt, \tau)/\varpi)=\sum_{\sigma} m_{\sigma}(\wt, \tau)
 \mathcal C_{2, \sigma}.$$
 $$ z_{d-2}( R_1^{\mathrm{cr}}(\wt, \tau)/\varpi)=\sum_{\sigma} m_{\sigma}^{\mathrm{cr}}(\wt, \tau) \mathcal C_{1, \sigma}, \quad  z_{d-2}( R_2^{\mathrm{cr}}(\wt, \tau)/\varpi)=\sum_{\sigma} m_{\sigma}^{\mathrm{cr}}(\wt, \tau)
 \mathcal C_{2, \sigma}.$$

Then $\Spec R_1(\wt, \tau)/\varpi$, $\Spec R_2(\wt, \tau)/\varpi$ are  contained in the reducible locus of $\Spec R_1$ and  $\Spec R_2$ respectively  and for all $p$-adic Hodge types $(\wt, \tau)$ we have 
$$ z_{d-1}(R^{\ver}(\wt, \tau)/\varpi))=\sum_{\sigma}( m_{\sigma}(\wt, \tau)\alpha( \mathcal C_{1, \sigma}) +m_{2, \sigma}(\wt, \tau)
\alpha( \mathcal C_{2, \sigma})),$$
$$ z_{d-1}(R^{\ver, \mathrm{cr}}(\wt, \tau)/\varpi))=\sum_{\sigma}( m^{\mathrm{cr}}_{\sigma}(\wt, \tau)\alpha( \mathcal C_{1, \sigma}) +m^{\mathrm{cr}}_{\sigma}(\wt, \tau)
\alpha( \mathcal C_{2, \sigma})).$$
In particular, 
$$e(R^{\ver}(\wt, \tau)/\varpi)=\sum_{\sigma}( m_{\sigma}(\wt, \tau) e(\mathcal C_{1, \sigma}) +m_{\sigma}(\wt, \tau)
e( \mathcal C_{2, \sigma})),$$
$$e(R^{\ver, \mathrm{cr}}(\wt, \tau)/\varpi)=\sum_{\sigma}( m^{\mathrm{cr}}_{\sigma}(\wt, \tau) e(\mathcal C_{1, \sigma}) +m^{\mathrm{cr}}_{\sigma}(\wt, \tau)
e( \mathcal C_{2, \sigma})).$$

\end{thm}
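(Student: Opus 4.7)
The plan is to reduce to Theorem~\ref{bm_cycle} in two steps: first verify its reducibility hypothesis $\Spec R_i(\wt,\tau)/\varpi\subset V(c)$ for $i=1,2$ and every $(\wt,\tau)$; second, feed the assumed cycle decompositions on the $R_i$-side through Theorem~\ref{bm_cycle} to obtain the formula on the $R^{\ver}$-side. Since the map $\alpha$ of \eqref{cycle_map} is $\ZZ$-linear on cycles and, as noted in the discussion following \eqref{cycle_map}, preserves Hilbert--Samuel multiplicities, the multiplicity statements are automatic consequences of the cycle statements. The potentially crystalline case runs in parallel using the $\mathrm{cr}$-variants of Remark~\ref{variants}.

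The main obstacle is the reducibility step. My strategy would be to first show that each universal cycle $\mathcal C_{i,\sigma}$ is itself supported in $V(c,\varpi)\subset \Spec R_i/\varpi$. To this end I would choose auxiliary Hodge types $(\wt_0,\tau_0)$ for which $R_i(\wt_0,\tau_0)/\varpi$ is already known to be supported in $V(c,\varpi)$ --- concretely, crystalline types of sufficiently small Hodge--Tate weight, where classical results on crystalline lifts force any such lift of the residually reducible $\rho_i$ with distinct Jordan--H\"older factors to be reducible. The multiplicities $m_\sigma(\wt_0,\tau_0)$ for such types are explicit, and by varying $(\wt_0,\tau_0)$ one obtains a system of relations in $\mathcal Z_{d-2}(R_i/\varpi)\otimes \QQ$ which, after inversion, expresses each $\mathcal C_{i,\sigma}$ as a $\QQ$-linear combination of cycles supported in $V(c,\varpi)$; hence $\mathcal C_{i,\sigma}$ is itself supported there. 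For a general $(\wt,\tau)$, the cycle $z_{d-2}(R_i(\wt,\tau)/\varpi)=\sum_\sigma m_\sigma(\wt,\tau)\mathcal C_{i,\sigma}$ is then supported in $V(c,\varpi)$; since $\Spec R_i(\wt,\tau)/\varpi$ is equidimensional of dimension $d-2$ (by the flatness of $R_i(\wt,\tau)$ over $\OO$ together with Kisin's results), its irreducible components coincide with the supports of the top-dimensional part of this cycle, and the reducibility of $\Spec R_i(\wt,\tau)/\varpi$ follows.

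With the reducibility verified, Theorem~\ref{bm_cycle} yields
$$z_{d-1}(R^{\ver}(\wt,\tau)/\varpi)=\alpha\bigl(z_{d-2}(R_1(\wt,\tau)/\varpi)+z_{d-2}(R_2(\wt,\tau)/\varpi)\bigr).$$
Substituting the hypothesized expansions $z_{d-2}(R_i(\wt,\tau)/\varpi)=\sum_\sigma m_\sigma(\wt,\tau)\,\mathcal C_{i,\sigma}$ and invoking $\ZZ$-linearity of $\alpha$ produces the claimed cycle formula for $R^{\ver}(\wt,\tau)/\varpi$; the potentially crystalline case is obtained verbatim by replacing $\sigma(\wt,\tau)$ and $R^{\ver}$ by their crystalline counterparts. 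Passing to Hilbert--Samuel multiplicities on both sides, and using that $\alpha$ respects multiplicities, yields the final numerical identities.
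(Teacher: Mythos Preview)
Your proposal is correct and follows essentially the same strategy as the paper: identify the $\mathcal C_{i,\sigma}$ with cycles known to be supported in the reducible locus, deduce from equidimensionality that every $\Spec R_i(\wt,\tau)/\varpi$ lies in $V(c)$, then invoke Theorem~\ref{bm_cycle} and the linearity and multiplicity-preservation of $\alpha$.

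The one substantive difference is that where you propose to invert a system of multiplicity relations over $\QQ$, the paper makes a sharper choice that sidesteps this entirely. For each Serre weight $\sigma\cong\Sym^r k^2\otimes\det^s$ one takes the crystalline type $(\wt(\sigma),\tau_0)=((s,s+r+1),\Eins\oplus\Eins)$; then $\sigma^{\mathrm{cr}}(\wt(\sigma),\Eins\oplus\Eins)=\Sym^r L^2\otimes\det^s$ has irreducible reduction $\sigma$, so the multiplicity matrix is the identity and one reads off directly
\[
\mathcal C_{i,\sigma}=z_{d-2}\bigl(R_i^{\mathrm{cr}}(\wt(\sigma),\Eins\oplus\Eins)/\varpi\bigr).
\]
The reducibility of these particular crystalline deformation rings is then quoted from \cite[Lem.~3.5]{KW}, which is exactly the ``classical result on crystalline lifts of small weight'' you allude to. This buys a cleaner argument with no rational coefficients and no invertibility check; your approach would also work but requires verifying that enough auxiliary types exist to make the system invertible, which in the end amounts to the same explicit choice.
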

\begin{proof} Each $\sigma$ is isomorphic to a representation of the form $\Sym^r k^2 \otimes\det^s$ with $0\le r \le p-1$, and $0\le s\le p-2$. The pair $(r, s)$ is 
uniquely determined by $\sigma$. Let $\wt(\sigma):=(s, s+r+1)$. Then $\sigma^{\mathrm{cr}}(\wt(\sigma), \Eins\oplus \Eins)=\Sym^r L^2 \otimes \det^s$, and
 $\overline{\sigma^{\mathrm{cr}}(\wt(\sigma), \Eins\oplus \Eins)}\cong \sigma$. Thus 
$m_{\sigma}^{\mathrm{cr}}(\wt(\sigma), \Eins\oplus \Eins)=1$, and $m_{\sigma'}^{\mathrm{cr}}(\wt(\sigma), \Eins \oplus \Eins)=0$ for all $\sigma'\not\cong \sigma$. Hence, the assumption implies that for all $\sigma$:
$$ \mathcal C_{1, \sigma}= z_{d-2}(R_1^{\mathrm{cr}}(\wt(\sigma), \Eins\oplus \Eins)), \quad 
\mathcal C_{2, \sigma}= z_{d-2}(R_2^{\mathrm{cr}}(\wt(\sigma), \Eins\oplus \Eins)).$$

Since  $\Spec R_1^{\mathrm{cr}}(\wt(\sigma), \Eins\oplus \Eins))$ and $\Spec R_2^{\mathrm{cr}}(\wt(\sigma), \Eins\oplus \Eins))$ are contained in the reducible 
locus by \cite[Lem.3.5]{KW}, the assertion follows immediately from Theorem \ref{bm_cycle} and the fact that \eqref{cycle_map} preserves Hilbert--Samuel multiplicities.
\end{proof} 

\begin{remar} Let $R_1^{\square}$, $R_2^{\square}$, $R^{\square}$ be the framed deformation rings of $\rho_1$, $\rho_2$ and $\rho$
respectively, and let $R_1^{\square}(\wt, \tau)$,  $R_2^{\square}(\wt, \tau)$ and $R^{\square}(\wt, \tau)$ denote the quotients, which 
parameterize potentially semi-stable lifts of type $(\wt, \tau)$. It follows from \cite[Prop. 2.1]{KW} that $R_1^{\square}$ is formally smooth over $R_1$ of relative dimension $3$, 
$R_2^{\square}$ is formally smooth over $R_2$ of relative dimension $3$, $R^{\square}$ is formally smooth over $R^{\ver}$ of relative dimension $2$. Since these framing variables only keep track of the chosen basis, we deduce that $R_1^{\square}(\wt, \tau)$,  $R_2^{\square}(\wt, \tau)$ and $R^{\square}(\wt, \tau)$ are formally smooth over $R_1(\wt, \tau)$,  $R_2(\wt, \tau)$ and $R^{\ver}(\wt, \tau)$ of relative dimension $3$, $3$ and $2$ respectively. This allows to use  Theorem \ref{split_bm} to deduce an analogous  statement
for the framed deformations rings. Moreover, one may additionally consider potentially crystalline lifts and/or fix the determinant.
\end{remar}

\end{document}